\providecommand{\U}[1]{\protect\rule{.1in}{.1in}}
\providecommand{\U}[1]{\protect\rule{.1in}{.1in}}
\providecommand{\U}[1]{\protect\rule{.1in}{.1in}}
\providecommand{\U}[1]{\protect\rule{.1in}{.1in}}
\newtheorem{theorem}{Theorem}[section]
\theoremstyle{plain}
\newtheorem{corollary}{Corollary}[section]
\newtheorem{definition}{Definition}[section]
\newtheorem{lemma}{Lemma}[section]
\newtheorem{proposition}{Proposition}[section]
\newtheorem{remark}{Remark}[section]
\numberwithin{equation}{section}
\begin{document}
\title[Holomorphic foliations with quasi-homogeneous separatrices]{A comprehensive approach to the moduli space of quasi-homogeneous singularities}
\author{Leonardo M. C\^{a}mara}
\address{Departamento de Matem\'{a}tica, CCE-UFES, Av. Fernando Ferrari 514, Campus de
Goiabeiras, Vit\'{o}ria-ES, 29075-910, Brazil.}
\email{leonardo.camara@ufes.br}
\author{Bruno Sc\'{a}rdua}
\address{Intituto de Matem\'{a}tica, Universidade Federal do Rio de Janeiro}
\email{scardua@im.ufrj.br }
\thanks{This work was partially supported by CAPES-PROCAD grant n$^{\text{\d{o}}}$ 0007056}
\thanks{2010 Math. Subj. Class. 37F75, 32S65, 32S45}
\keywords{Singular holomorphic foliations, holonomy groups, quasi-homogeneous curves.}

\begin{abstract}
We study the relationship between singular holomorphic foliations in
$(\mathbb{C}^{2},0)$ and their separatrices. Under mild conditions we describe
a complete set of analytic invariants characterizing foliations with
quasi-homogeneous separatrices. Further, we give the full moduli space of
quasi-homogeneous plane curves. This paper has  an expository character in order to make it accessible also to non-specialists. 

\end{abstract}
\maketitle

\section{Introduction}

In this this paper we deal with the classification of  germs of curves and
germs of holomorphic foliations in $(\mathbb{C}^{2},0)$ (cf. Theorems A and B). The problem of the classification of germs
of analytic plane curves has been addressed by several authors since the
XVII$^{\text{th}}$ century with different methods (see for instance \cite{Be
06}, \cite{Br 66}, \cite{HefHer 2007}). In the first part of the present work,
we study the problem of the analytic classification of germs of singular
curves with many branches from the viewpoint of Holomorphic Foliations. This
allows the use of geometrical techniques including the blow-up and holonomy
which are related to the study of normal forms for quasi-homogeneous
polynomials in two variables.

Next, we use the standard resolution of theses singularities in order to
stratify them and thus identify the moduli space of each stratum. As a
consequence, our method provides an effective way to identify if two
quasi-homogeneous curves are equivalent. Further, remark that the analytic
type of a quasi-homogeneous curve is one of the invariants which determine the
analytic type of a foliation having such a curve as separatrix set (cf. Theorem ~B). Therefore, the present classification completes the
classification of such germs of complex analytic foliations.

On the other hand, the problem of local classification of differential
equations of the form $Adx+Bdy=0$ in\ two variables has been studied by
various mathematicians --- since the end of the nineteenth century --- as C. A.
Briot, J. C. Bouquet, H. Dulac, H. Poincar\'{e}, I. Bendixson, G. D. Birkhoff,
C. L. Siegel, A. D. Brjuno \textit{et Al}. In the middle 1970s R. Thom
restored the interest in this question with a series of talks at IHES. In
fact, he conjectured that a germ of a foliation $\mathcal{F}$ in $(\mathbb{C}%
^{2},0)$ with a finite number of separatrices, i.e. a finite number of
analytic invariant curves through the origin, has its analytic type
characterized by its holonomy with respect to the separatrix set (cf.
\cite{CeMo 88}, pp. 162, 163). In \cite{MaMo 80}, \cite{MaRa 82}, and
\cite{MaRa 83} it is proved that the conjecture has an affirmative answer if
the linear part of the vector field defining the foliation is non-nilpotent.
In \cite{Mo 85} it is proved that the conjecture is not true in general with
the introduction of an analytic invariant called vanishing holonomy. Further,
in \cite{CaSa 82a} it is proved that any germ of a singular holomorphic
foliation in $(\mathbb{C}^{2},0)$ has a nonempty  separatrix set, which is denoted by
$\operatorname*{Sep}(\mathcal{F})$. Since this time, the problem of finding a
complete set of analytic invariants determining the analytic type of a germ of a foliation in $(\mathbb{C}^{2},0)$ having a finite number of separatrices is
known as Thom's problem (cf.\cite{EliIl'yShVo 93}, pp. 60, 98). In \cite{CeMo
88} the results of \cite{Mo 85} are generalized, classifying a Zariski open
subset of the nilpotent singularities in terms of the vanishing holonomy (now
called projective holonomy). Other contributions have been given by many
authors such as \cite{BeMzSa 99}, \cite{EliIl'yShVo 93}, \cite{Str 2003}, etc.

In \cite{Ma2000} the problem of moduli space is studied from the deformation
viewpoint. There it is proved that the moduli space of local unfoldings of
quasi-homogeous foliations is determined by the conjugacy class of the
projective holonomy and the analytic type of its separatrix set for a generic
class of foliations called quasi-hyperbolic (cf. \cite{Ma2000}, Definition
1.1, p. 255; Theorem B, p. 256; and Definition 6.8, p. 273). Namely, a germ of a foliation $\mathcal{F}$ is called \textit{quasi-hyperbolic generic} provided
that the following conditions are satisfyed: (i) its resolution $\widetilde
{\mathcal{F}}$ has at least one non-solvable projective holonomy; (ii)
$\widetilde{\mathcal{F}}$ has no saddle-nodes and the ratio between the
eigenvalues of each of its singular points is not a negative real number.
After, in \cite{Gz 2008} it is proved that any two quasi-hyperbolic generic
quasi-homogeous foliations can be linked by such kind of unfoldings,
classifying the quasi-hyperbolic generic quasi-homogeous foliations.

Here, from a quite different viewpoint, we show in the second part of this
work an analogous result with less restrictive hypotheses on the foliation $\mathcal{F}$
(cf. Theorem ~B), using a geometric and much simpler proof. In
fact, this geometrical approach leads also to the classification of curves.

Finally, we would like to remark that one of the main sources of inspiration
for this work was the relationship between singular holonomies (cf. e.g.
\cite{CaSc95}, \cite{CaSc99}, \cite{CaSc2000}, \cite{CaSc 2001}) and the
analytic type of a foliation near their Hopf components (see definition
below). Furthermore, our approach can be used to understand the moduli space
of more general germs of singular foliations, for instance, in the presence of
saddle nodes.

The plan of the  article is as follows. First we determine normal forms for
quasi-homogeneous algebraic curves obtaining some geometric properties for the
resolution of the separatrix set. With this geometric features at hand, we
determine the moduli space in terms of the moduli space of punctured Riemann
spheres. In the sequel, we study the semilocal invariants of resolved
foliation determining the analytic type of each Hopf component of the
foliation. Then we introduce natural cocycles that measure the obstruction for
two analytically componentwise equivalent foliations to be really analytically
equivalent. Finally we use the geometric description of the separatrix set in
order to trivialize these cocycles and construct an explicit conjugation
between two foliations with the same quasi-homogeneous curve and analytically
conjugate projective holonomies.

\part{Classification of curves}

\section{Preliminaries}

Let $C$ be a singular curve and  $\pi:(\mathcal{M},D)\longrightarrow
(\mathbb{C}^{2},0)$ its standard resolution, i.e. the minimal resolution of
$C$ whose \textit{strict transform} $\widetilde{C}:=\pi^{-1}(C)\backslash D$
\ is transversal to the exceptional divisor $D=\pi^{-1}(0)$. A germ of a holomorphic function $f\in\mathbb{C}\{x,y\}$ is said to be
\textit{quasi-homogeneous} if there is a local system of coordinates in which
$f$ can be represented by a quasi-homogeneous polynomial, i.e. $f(x,y)=\sum
_{ai+bj=d}a_{ij}x^{i}y^{j}$ where $a,b,d\in\mathbb{N}$. Let $M$ be a manifold
and $M_{\Delta}(n):=\{(x_{1},\cdots,x_{n})\in M^{n}:x_{i}\neq x_{j}$ for all
$i\neq j\}$. Let $S_{n}$ denote the group of permutations of $n$ elements and
consider its action in $M_{\Delta}(n)$ given by $(\sigma,\lambda)\mapsto
\sigma\cdot\lambda=(\lambda_{\sigma(1)},\cdots,\lambda_{\sigma(n)})$. The
quotient space induced by this action is denoted by $\operatorname*{Symm}%
(M_{\Delta}(n))$. Now suppose a Lie group $G$ acts in $M$ and let $G$ act in
$M_{\Delta}(n)$ in the natural way $(g,\lambda)=(g\cdot\lambda_{1}%
,\cdots,g\cdot\lambda_{n})$ for every $\lambda\in M_{\Delta}(n)$. Then the
actions of $G$ and $S_{n}$ in $M_{\Delta}(n)$ commute. Thus one obtains a
natural action of $G$ in $\operatorname*{Symm}(M_{\Delta}(n))$. Given
$\lambda\in M_{\Delta}(n)$, denote its equivalence class in
$\operatorname*{Symm}(M_{\Delta}(n))/G$ by $[\lambda]$.

Let $C$ be a quasi-homogeneous curve determined by $f=0$, where $f$ is a
reduced polynomial. Then Lemma \ref{general decomp.} says that $f$ can be
(uniquely) written in the form%
\[
f(x,y)=x^{m}y^{k}%
{\displaystyle\prod\limits_{j=1}^{n}}
(y^{p}-\lambda_{j}x^{q})
\]
where $m,k\in\mathbb{Z}_{2}$, $p,q\in\mathbb{Z}_{+}$, $p\leq q$, $\gcd
(p,q)=1$, and $\lambda_{j}\in\mathbb{C}^{\ast}$ are pairwise distinct. In
particular $C$ has $n+m+k$ distinct branches. Since the exceptional divisor of
the standard resolution and the number of irreducible components are analytic
invariants of a germ of curve, then Lemmas \ref{companion fibration} and
\ref{companion 2} ensure that the triple $(p,q,n)$ is an analytic invariant of
the curve. Thus we have to consider the following three distinct cases:

\begin{enumerate}
\item[i)] $f(x,y)=x^{m}%
{\displaystyle\prod\limits_{j=1}^{n}}
(y-\lambda_{j}x)$ where $m\in\mathbb{Z}_{2}$, and $\lambda_{j}\in\mathbb{C}$.

\item[ii)] $f(x,y)=x^{m}%
{\displaystyle\prod\limits_{j=1}^{n}}
(y-\lambda_{j}x^{q})$ where $m\in\mathbb{Z}_{2}$, $q\in\mathbb{Z}_{+}$,
$q\geq2$ and $\lambda_{j}\in\mathbb{C}$.

\item[iii)] $f(x,y)=x^{m}y^{k}%
{\displaystyle\prod\limits_{j=1}^{n}}
(y^{p}-\lambda_{j}x^{q})$ where $m,k\in\mathbb{Z}_{2}$, $p,q\in\mathbb{Z}_{+}%
$, $2\leq p<q$, $\gcd(p,q)=1$, and $\lambda_{j}\in\mathbb{C}^{\ast}$.
\end{enumerate}

A quasi-homogeneous curve is said to be of \textit{type} $(1,1,n)$, $(1,q,n)$,
and $(p,q,n)$ respectively in cases i), ii), and iii).
\vglue.1in
\noindent{\bf Theorem A}
{\sl The analytic moduli space of germs of quasi-homogeneous curves of
type $(p,q,n)$ are given respectively by\medskip

\begin{enumerate}
\item[i)] $\frac{\operatorname*{Symm}(\mathbb{P}_{\Delta}^{1}(n))}%
{\operatorname*{PSL}(2,\mathbb{C})}$ if $(p,q)=(1,1)$;\bigskip

\item[ii)] $\mathbb{Z}_{2}\times\frac{\operatorname*{Symm}(\mathbb{C}_{\Delta
}(n))}{\operatorname*{Aff}(\mathbb{C})}$ if $p=1$ and $q>1$;\bigskip

\item[iii)] $\mathbb{Z}_{2}\times\mathbb{Z}_{2}\times\frac
{\operatorname*{Symm}(\mathbb{C}_{\Delta}^{\ast}(n))}{\operatorname*{GL}%
(1,\mathbb{C})}$ if $1<p<q$.
\end{enumerate}
}

\vglue.1in
\section{Quasi-homogeneous polynomials}

\subsection{Normal forms}

A\ quasi-homogeneous polynomial $f\in\mathbb{C}[x,y]$ is called
\textit{commode}\textbf{ }if its Newton polygon intersects both coordinate
axis. Further, notice that a polynomial in two variables $P\in\mathbb{C}[x,y]$
may be considered as a polynomial in the variable $y$ with coefficients in
$\mathbb{C}[x]$, i.e. $P\in(\mathbb{C}[x])[y]$. Let $\operatorname*{ord}_{y}P$
be the order of $P$ as a polynomial in $(\mathbb{C}[x])[y]$. Similarly let
$\operatorname*{ord}_{x}P$ be the order of $P$ as an element of $(\mathbb{C}%
[y])[x]$. Therefore, a quasi-homogeneous polynomial $P\in\mathbb{C}[x,y]$ is
commode if and only if $\operatorname*{ord}_{x}P=\operatorname*{ord}_{y}P=0$.
Next, we recall the general behavior of a quasi-homogeneous polynomial.

\begin{lemma}
\label{first decomp.}Let $P\in\mathbb{C}[x,y]$ be a quasi-homogeneous
polynomial, then it has a unique decomposition in the form
\[
P(x,y)=x^{m}y^{n}P_{0}(x,y)
\]
where $m,n\in\mathbb{N}$, $\lambda\in\mathbb{C}$, and $P_{0}$ is a commode
quasi-homogeneous polynomial.
\end{lemma}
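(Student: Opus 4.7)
The plan is to set $m := \operatorname{ord}_x P$ and $n := \operatorname{ord}_y P$, factor the monomial $x^m y^n$ out of $P$, and verify that the resulting quotient $P_0 := P/(x^m y^n)$ is both quasi-homogeneous (for the same weights as $P$) and commode.

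First I would write $P(x,y) = \sum_{(i,j)\in \Lambda} a_{ij}\, x^i y^j$, where $\Lambda := \{(i,j) : a_{ij}\neq 0\}$, and record that by quasi-homogeneity there exist $\alpha,\beta,d\in\mathbb{N}$ with $\alpha i+\beta j=d$ for every $(i,j)\in\Lambda$. By the very definition of the orders, $m=\min\{i : (i,j)\in\Lambda\}$ and $n=\min\{j : (i,j)\in\Lambda\}$, so $i\geq m$ and $j\geq n$ for every monomial appearing in $P$. Consequently $x^m y^n$ divides $P$ in $\mathbb{C}[x,y]$, and the quotient
\[
P_0(x,y)=\sum_{(i,j)\in\Lambda} a_{ij}\, x^{i-m}\, y^{j-n}
\]
is a genuine polynomial.

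Next I would check the two stated properties of $P_0$. For quasi-homogeneity, for each $(i,j)\in\Lambda$ one has $\alpha(i-m)+\beta(j-n)=d-\alpha m-\beta n$, a constant independent of $(i,j)$; hence $P_0$ is quasi-homogeneous with the same weights $(\alpha,\beta)$ and weighted degree $d-\alpha m-\beta n$. For commodeness, note that by definition of $m$ there exists $(m,j_0)\in\Lambda$, producing in $P_0$ the monomial $a_{m,j_0}\, y^{j_0-n}$, which has $x$-exponent zero; hence $\operatorname{ord}_x P_0=0$. The symmetric argument with $n$ gives $\operatorname{ord}_y P_0=0$, so $P_0$ is commode.

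Finally I would dispose of uniqueness. Suppose $P=x^{m'} y^{n'} P_0'$ with $P_0'$ commode quasi-homogeneous. Then every monomial of $P$ has $x$-exponent $\geq m'$, so $\operatorname{ord}_x P\geq m'$; conversely, commodeness of $P_0'$ provides a monomial of $P_0'$ of the form $c\, y^{\ell}$, giving a monomial $c\, x^{m'} y^{n'+\ell}$ of $P$, so $\operatorname{ord}_x P\leq m'$. Hence $m'=m$, and symmetrically $n'=n$, forcing $P_0'=P/(x^m y^n)=P_0$. The whole argument is a pure bookkeeping exercise on the support of $P$; there is no genuine obstacle, the only point requiring a moment of care being that the quasi-homogeneity relation $\alpha i+\beta j=d$ is preserved under the shift $(i,j)\mapsto (i-m,j-n)$, which it clearly is.
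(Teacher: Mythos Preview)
Your proof is correct and follows essentially the same approach as the paper: define $m=\operatorname{ord}_x P$, $n=\operatorname{ord}_y P$, factor out $x^m y^n$, and observe that the quotient is commode quasi-homogeneous of weighted degree $d-\alpha m-\beta n$. Your version is in fact more explicit than the paper's, particularly in spelling out the uniqueness argument, which the paper leaves implicit.
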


\begin{proof}
Let $m:=\operatorname*{ord}_{x}P$ and $n:=\operatorname*{ord}_{y}P$. Clearly,
both $x^{m}$ and $y^{n}$ divide $P$. Hence $P$ can be written in the form
$P(x,y)=\sum_{ai+bj=d}a_{ij}x^{i}y^{j}$ where $i\geq m$ and $j\geq n$. Thus
$P(x,y)=x^{m}y^{n}P_{0}(x,y)$ where $P_{0}(x,y)=\sum_{ai^{\prime}+bj^{\prime
}=d^{\prime}}a_{i^{\prime}+m,j^{\prime}+n}x^{i^{\prime}}y^{j^{\prime}}$ and
$d^{\prime}:=d-am-bn$. Since $m=\operatorname*{ord}_{x}P$ and
$n=\operatorname*{ord}_{y}P$, then $\operatorname*{ord}_{x}P_{0}%
=0=\operatorname*{ord}_{y}P_{0}$. The result then follows directly from the
above remark.
\end{proof}

\begin{definition}
A commode polynomial $P\in\mathbb{C}[x,y]$ is called \textit{monic} in $y$ if
it is a monic polynomial in $(\mathbb{C}[x])[y]$.
\end{definition}

\begin{lemma}
\label{main decomp.}Let $P\in\mathbb{C}[x,y]$ be a commode quasi-homogeneous
polynomial, which is monic in $y$. Then $P$ can be written uniquely as
\[
P(x,y)=\prod_{\ell=1}^{k}(y^{p}-\lambda_{\ell}x^{q})\text{,}%
\]
where $\gcd(p,q)=1$ and $\lambda_{\ell}\in\mathbb{C}^{\ast}$.
\end{lemma}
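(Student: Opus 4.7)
\medskip

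\noindent\textbf{Proof plan.} The plan is to exploit the commode quasi-homogeneity of $P$ in order to reduce it to a genuinely homogeneous polynomial of degree $M$ in two new variables, via the substitution $X = x^q$, $Y = y^p$; once this is done, the fundamental theorem of algebra supplies the factorization essentially for free.

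First I would extract the coprime pair $(p,q)$ intrinsically from the Newton polygon of $P$. By quasi-homogeneity, all monomials of $P$ lie on a single segment of the form $\{(i,j) : ai+bj = d\}$; since $P$ is commode and monic in $y$, this segment has endpoints $(i_{\max},0)$ and $(0,N)$, where $N = \deg_y P$. Writing the slope in lowest terms as $N/i_{\max} = p/q$ with $\gcd(p,q) = 1$ determines a unique $M \in \mathbb{N}$ satisfying $N = pM$ and $i_{\max} = qM$. A short index computation, using coprimality of $(p,q)$, then shows that the only monomials that can occur in $P$ are $x^{qk}y^{p(M-k)}$ for $k = 0,1,\ldots,M$.

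Next I would carry out the substitution $X := x^q$, $Y := y^p$. This converts $P$ into the polynomial
\[
\widetilde{P}(X,Y) \;=\; \sum_{k=0}^{M} c_k\, X^{k} Y^{M-k},
\]
which is now genuinely homogeneous of degree $M$, with $c_0 = 1$ (monic in $y$) and $c_M \neq 0$ (commode: $\operatorname{ord}_x P = 0$). The fundamental theorem of algebra provides
\[
\widetilde{P}(X,Y) \;=\; \prod_{\ell=1}^{M} (Y - \lambda_\ell X),\qquad \lambda_\ell \in \mathbb{C},
\]
and the identity $c_M = (-1)^M \prod_\ell \lambda_\ell \neq 0$ forces every $\lambda_\ell$ to lie in $\mathbb{C}^{\ast}$. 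Reverting the substitution yields the required product $P(x,y) = \prod_{\ell=1}^{M}(y^p - \lambda_\ell x^q)$.

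For uniqueness, the pair $(p,q)$ is read off from the Newton polygon of $P$ alone, and once $(p,q)$ has been fixed, specializing $x = 1$ gives $P(1,y) = \prod_\ell (y^p - \lambda_\ell)$, so the multiset $\{\lambda_\ell\} \subset \mathbb{C}^\ast$ is recovered as the multiset of roots (in the variable $Y = y^p$) of $\widetilde{P}(1,\cdot)$. The only mildly delicate step, rather than a genuine obstacle, is the monomial parametrization in the first paragraph: one must verify that the coprimality of $(p,q)$ forces the non-negative integer solutions of $pi + qj = pqM$ with $j \le pM$ to be exactly the pairs $(qk,\, p(M-k))$. Once this is in place, the remainder reduces to classical facts about binary forms.
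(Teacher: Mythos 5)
Your proposal is correct and follows essentially the same route as the paper: both arguments use coprimality of $(p,q)$ to show that the only monomials occurring are $x^{qk}y^{p(M-k)}$, and then reduce to the fundamental theorem of algebra for a degree-$M$ polynomial in the single variable $y^p/x^q$ (the paper via the substitution $y=tx^{q/p}$, you via the equivalent homogenization $X=x^q$, $Y=y^p$). Your explicit uniqueness discussion is a small but welcome addition the paper leaves implicit.
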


\begin{proof}
First remark that any quasi-homogeneous polynomial can be written in the form
$P(x,y)=\sum_{pi+qj=m}a_{ij}x^{i}y^{j}$ where $p,q,m\in\mathbb{N}$ and
$\gcd(p,q)=1$. Since $P$ is commode, there are $i_{0},j_{0}\in\mathbb{N}$ such
that $qj_{0}=m$ and $pi_{0}=m$; in particular $k:=m/pq\in\mathbb{N}$.
Therefore $pi+qj=pqk$. Since $\gcd(p,q)=1$, then $q$ divides $i$ and $p$
divides $j$. If we let $i=qi^{\prime}$ and $j=pj^{\prime}$, then $pqi^{\prime
}+qpj^{\prime}=pqk$. Thus $P$ can be written in the form $P(x,y)=\sum
_{i+j=k}a_{qi,pj}x^{qi}y^{pj}$. Let $y=tx^{\frac{q}{p}}$, then the above
equation assumes the form $P(x,tx^{q/p})=x^{qk}\sum_{i+j=k}a_{qi,pj}t^{pj}$.
Now let $\{\lambda_{j}\}_{j=1}^{k}$ be the roots of the polynomial
$g(z)=\sum_{i+j=k}a_{qi,pj}z^{j}$, then
\begin{align*}
P(x,y)  &  =x^{qk}\prod_{\ell=1}^{k}(t^{p}-\lambda_{l})=x^{qk}\prod_{\ell
=1}^{k}(\frac{y^{p}}{x^{q}}-\lambda_{l})\\
&  =\prod_{\ell=1}^{k}(y^{p}-\lambda_{l}x^{q}).
\end{align*}

\end{proof}

\begin{lemma}
\label{general decomp.}Let $P\in\mathbb{C}[x,y]$ be a quasi-homogeneous
polynomial. Then $P$ can be written, uniquely, in the form%
\[
P(x,y)=\mu x^{m}y^{n}\prod_{\ell=1}^{k}(y^{p}-\lambda_{\ell}x^{q})
\]
where $m,n,p,q\in\mathbb{N}$, $\mu,\lambda_{\ell}\in\mathbb{C}^{\ast}$, and
$\gcd(p,q)=1$.
\end{lemma}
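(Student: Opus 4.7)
The proof will simply stitch together the two previous lemmas, with one intermediate normalization step to pass from ``commode'' to ``commode and monic in $y$''. Here is the plan.

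First, I would apply Lemma~\ref{first decomp.} to obtain the unique decomposition $P(x,y) = x^{m} y^{n} P_{0}(x,y)$ with $m = \operatorname{ord}_{x} P$, $n = \operatorname{ord}_{y} P$, and $P_{0}$ a commode quasi-homogeneous polynomial. Since $P_{0}$ is commode, we have $\operatorname{ord}_{x} P_{0} = \operatorname{ord}_{y} P_{0} = 0$, so there exist exponents $i_{0}, j_{0}$ with monomials $a_{i_{0},0} x^{i_{0}}$ and $a_{0,j_{0}} y^{j_{0}}$ appearing in $P_{0}$, both with nonzero coefficients. Setting $\mu := a_{0,j_{0}} \in \mathbb{C}^{\ast}$, the leading coefficient of $P_{0}$ viewed as an element of $(\mathbb{C}[x])[y]$, the polynomial $Q(x,y) := \mu^{-1} P_{0}(x,y)$ is still a commode quasi-homogeneous polynomial, and now it is monic in $y$.

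Next, I would apply Lemma~\ref{main decomp.} to $Q$ to obtain
\[
Q(x,y) = \prod_{\ell=1}^{k} (y^{p} - \lambda_{\ell} x^{q})
\]
with $\gcd(p,q) = 1$ and $\lambda_{\ell} \in \mathbb{C}^{\ast}$. Substituting back gives the desired expression
\[
P(x,y) = \mu \, x^{m} y^{n} \prod_{\ell=1}^{k} (y^{p} - \lambda_{\ell} x^{q}).
\]

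For uniqueness, I would argue as follows. The exponents $m$ and $n$ are forced to equal $\operatorname{ord}_{x} P$ and $\operatorname{ord}_{y} P$, which are intrinsic to $P$; this also forces $P_{0}$ to be unique by Lemma~\ref{first decomp.}. The scalar $\mu$ is then the leading coefficient of $P_{0}$ in $y$, hence unique. Finally, the pair $(p,q)$ with $\gcd(p,q)=1$ is determined by the weights of the commode factor $Q$ (whenever $k \geq 1$), and the multiset $\{\lambda_{\ell}\}$ is uniquely determined as the zero set of the single-variable polynomial obtained from $Q$ after the substitution $y = t x^{q/p}$ carried out in the proof of Lemma~\ref{main decomp.}.

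The argument is essentially a bookkeeping combination of the two previous lemmas, so no real obstacle arises; the only point requiring a bit of care is the normalization step producing $\mu$, which is needed because Lemma~\ref{main decomp.} is stated for polynomials monic in $y$ rather than arbitrary commode ones.
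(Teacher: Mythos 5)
Your proposal is correct and follows the same route as the paper: decompose via Lemma~\ref{first decomp.}, factor out the leading coefficient $\mu$ in $y$ to reduce to the monic commode case, and then apply Lemma~\ref{main decomp.}; the paper's own proof is just a one-line version of exactly this observation. Your added detail on uniqueness (identifying $m,n$ as the orders and $\mu$ as the leading coefficient) is a harmless elaboration of what the paper leaves implicit.
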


\begin{proof}
In view of Lemma \ref{first decomp.} and Lemma \ref{main decomp.}, it is
enough to remark that any commode quasi-homogeneous polynomial $P\in
\mathbb{C}[x,y]$ can be written uniquely as $P=\mu P_{0}$ where $P_{0}$ is
monic in $y$.
\end{proof}

\subsection{Resolution}

We recall the geometry of the exceptional divisor of the minimal resolution of
a germ of quasi-homogeneous curve.

A \textit{tree of projective lines} is an embedding of a connected and simply
connected chain of projective lines intersecting transversely in a complex
surface (two dimensional complex analytic manifold) with two projective lines
in each intersection. In fact, it consists of a pasting of Hopf bundles whose
zero sections are the projective lines themselves. A\textbf{\ }\textit{tree of
points} is any tree of projective lines in which a finite number of points is
discriminated. The above nomenclature has a natural motivation. In fact, as is
well know, we can assign to each projective line a point and to each
intersection an edge in other to form the \textit{weighted dual graph}. Two
trees of points are called \textit{isomorphic} if their weighted dual graph
are isomorphic (as graphs). \ It is well known that any germ of analytic curve
$C$ in $(\mathbb{C}^{2},0)$ has a standard resolution, which we denote by
$\widetilde{C}$. If the exceptional divisor of $\widetilde{C}$ has just one
projective line containing three or more singular points of $\widetilde{C}$,
then it is called the \textit{principal projective line} of $\widetilde{C}$
and denoted by $D_{\operatorname*{pr}(\widetilde{C})}$. A tree of projective
lines is called a \textit{linear chain} if each of its projective lines
intersects at most other two projective lines of the tree. A\ projective line
of a linear chain is called an \textit{end} if it intersects just another one
projective line of the chain.

\begin{lemma}
\label{companion fibration}Let $C$ be a commode quasi-homogeneous curve. Then
its standard resolution tree is a linear chain and its standard resolution
$\widetilde{C}$ intersects just one projective line of $D$, i.e. $C$ has one
of the following diagrams\ of resolution:%

{\includegraphics[
trim=0.000000in 0.000000in 0.004544in 0.000000in,
height=0.5725in,
width=2.3653in
]%
{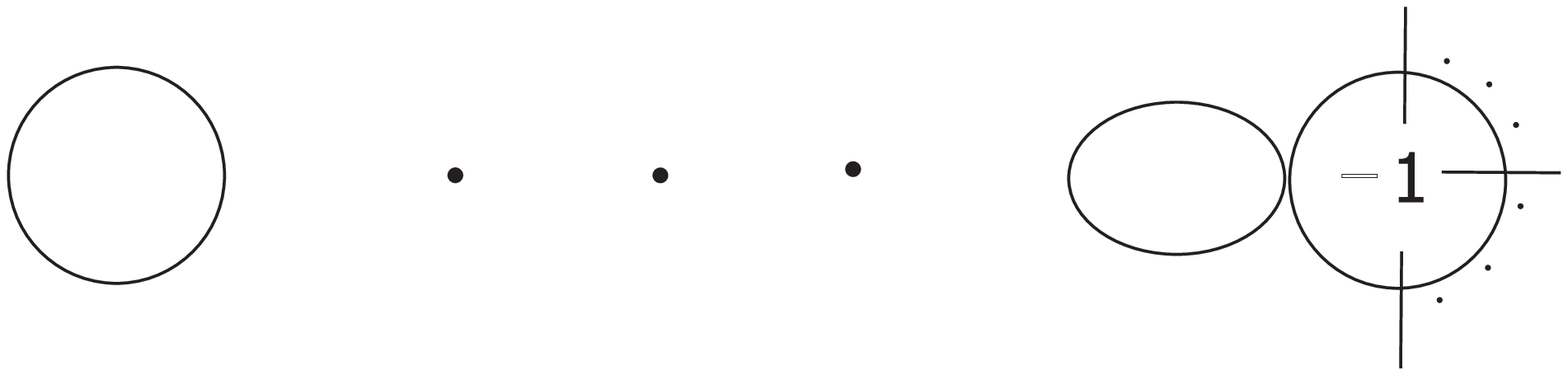}%
}
%

{\includegraphics[
trim=0.000000in 0.000000in -0.001454in 0.000000in,
height=0.5483in,
width=2.3367in
]%
{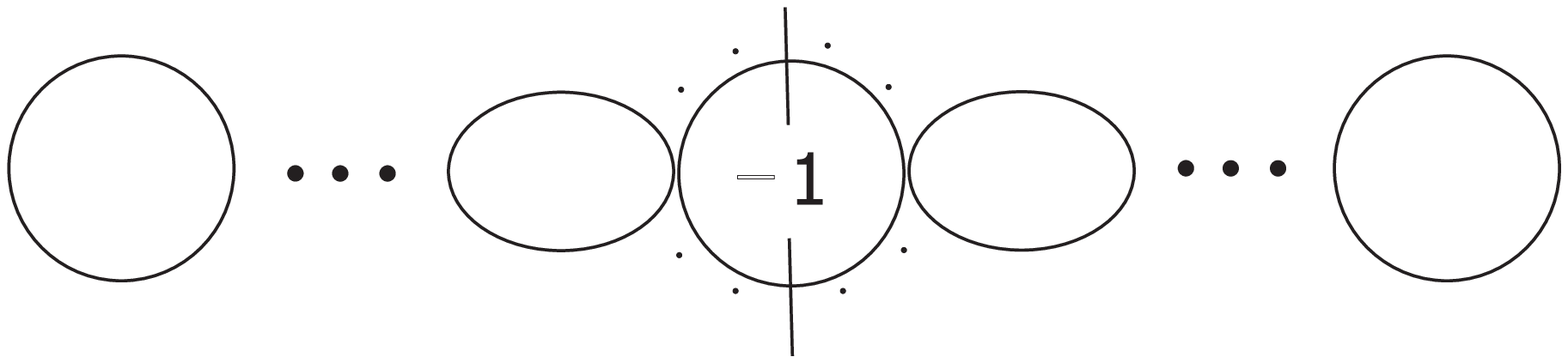}%
}

\end{lemma}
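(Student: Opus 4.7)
The plan is to first put the defining equation of $C$ in normal form using the preceding lemmas, and then run an induction on the pair $(p,q)$ that mirrors the Euclidean algorithm. Since $C$ is commode, Lemma \ref{general decomp.} (with $m=n=0$) reduces us to
\[
C = \Bigl\{ \textstyle\prod_{\ell=1}^{n} (y^{p} - \lambda_{\ell} x^{q}) = 0 \Bigr\},
\qquad \gcd(p,q)=1,\ \ \lambda_{\ell}\in\mathbb{C}^{\ast}\ \text{distinct}.
\]
We may assume $p \leq q$ and induct on $p+q$.

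The base case $p=q=1$ is immediate: a single point blow-up, read in the chart $x=x_{1}$, $y=x_{1}y_{1}$, yields strict transform $\prod(y_{1}-\lambda_{\ell})=0$, i.e., $n$ smooth branches transverse to the unique exceptional $\mathbb{P}^{1}$ at the distinct points $y_{1}=\lambda_{\ell}$. One projective line, trivially a linear chain, met by every branch of $\widetilde{C}$.

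For the inductive step, note $p<q$ since $\gcd(p,q)=1$ forbids $p=q>1$. Blow up the origin. In the chart $x=x_{1}$, $y=x_{1}y_{1}$ the total transform is $x_{1}^{np}\prod(y_{1}^{p}-\lambda_{\ell}x_{1}^{q-p})$, so the strict transform
\[
C_{1}=\Bigl\{ \textstyle\prod (y_{1}^{p}-\lambda_{\ell}x_{1}^{q-p})=0 \Bigr\}
\]
is again a commode quasi-homogeneous curve with $\gcd(p,q-p)=1$ and smaller sum $p+(q-p)=q$. In the complementary chart $x=x_{2}y_{2}$, $y=y_{2}$ the strict transform is $\prod(1-\lambda_{\ell}x_{2}^{q}y_{2}^{q-p})$, which is a unit on the exceptional divisor $\{y_{2}=0\}$; hence the entire strict transform lives in the first chart. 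Applying the inductive hypothesis to $C_{1}$ (after interchanging $x_{1}\leftrightarrow y_{1}$ if $q-p<p$) produces a resolution whose exceptional tree is a linear chain, met only at its last projective line by $\widetilde{C_{1}}=\widetilde{C}$.

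It remains to glue the first exceptional divisor $E_{1}=\{x_{1}=0\}$ onto that chain without destroying linearity. Each subsequent blow-up in the resolution of $C_{1}$ is centered at the origin of its respective chart, i.e., on the strict transform of $C_{1}$ and on the previous exceptional line only; so $E_{1}$ meets exactly one of the divisors introduced later, namely the very first one, and meets it transversely at one point. Hence $E_{1}$ becomes a new end of the chain, and $\widetilde{C}$ still intersects only the last line of the extended chain. The main obstacle in the plan is precisely this final bookkeeping: one must track, through the Euclidean-algorithm iteration on $(p,q)$, that the inductive resolution of $C_{1}$ attaches its first new exceptional line as an \emph{end} of its chain, so that appending $E_{1}$ keeps the chain linear. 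Everything else is routine chart computation.
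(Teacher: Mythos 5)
Your reduction to $\prod_{\ell}(y^{p}-\lambda_{\ell}x^{q})$ and the chart computation for the first blow-up are correct, and the idea of following the Euclidean algorithm on $(p,q)$ is the same engine that drives the paper's proof. But the inductive step has a genuine gap: you apply the inductive hypothesis to the strict transform $C_{1}=\{\prod(y_{1}^{p}-\lambda_{\ell}x_{1}^{q-p})=0\}$ \emph{as a bare curve germ}, i.e.\ you implicitly assert that the standard resolution of $C$ is ``one blow-up followed by the standard resolution of $C_{1}$.'' That is false, because the standard resolution of $C$ must make $\widetilde{C}$ transversal to the \emph{whole} exceptional divisor, including the line $E_{1}$ you have just created, and the branches of $C_{1}$ are in general tangent to $E_{1}$ (indeed all of them pass through $E_{1}$ at the single point $y_{1}=0$ with contact governed by $q-p$ and $p$). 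Concretely, for $(p,q)=(2,3)$ and $n=1$ the strict transform $C_{1}=\{y_{1}^{2}-x_{1}=0\}$ is already a smooth germ, so its standard resolution is empty and your induction terminates after one blow-up; yet the cusp $y^{2}=x^{3}$ needs two further blow-ups (three exceptional lines in all, with $\widetilde{C}$ attached to the middle one). The same defect makes the final ``gluing'' paragraph unreliable: the claim that $E_{1}$ ends up meeting the \emph{first} later divisor is already wrong for the cusp (after the third blow-up $E_{1}$ meets $E_{3}$, not $E_{2}$), and more importantly the list of blow-up centers produced by your induction is not the list of centers of the actual resolution. You flag the ``bookkeeping'' as the main obstacle, but the obstacle is upstream of bookkeeping: the object you induct on is the wrong one.

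The repair is to induct on the curve \emph{together with the divisor already present} (equivalently, on the pair of coordinate axes, or on the companion fibration $y^{p}/x^{q}\equiv\mathrm{const}$), which is exactly what the paper does. After one blow-up the fibration becomes $t^{p}/x^{q-p}$ in one corner and $u^{q}y^{q-p}$ in the other; the corner with a \emph{holomorphic} first integral needs no further blow-ups, while the corner with a \emph{meromorphic} first integral is where the next center sits, and the Euclidean algorithm on $(p,q)$ records this until a radial (dicritical) point appears and the branches separate. Carrying the fibration (or equivalently the total transform $x\cdot\prod(\cdots)=0$) through the induction is what guarantees both that the chain stays linear and that $\widetilde{C}$ ends up attached to a single line — the last one created.
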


\begin{proof}
From Lemma \ref{main decomp.}, there is a local system of coordinates $(x,y)$
such that $C=f^{-1}(0)$ where $f(x,y)=\prod_{l=1}^{k}(y^{p}-\lambda_{j}x^{q})$
with $p<q$ and $\gcd(p,q)=1$. Since each irreducible curve $y^{p}-\lambda
_{l}x^{q}=0$ is a generic fiber of the fibration $\frac{y^{p}}{x^{q}}%
\equiv\operatorname*{const}$, then it is resolved together with the fibration.
After one blowup we obtain:
\[%
\begin{array}
[c]{c}%
t^{p}/x^{q-p}\equiv\operatorname*{const},\\
u^{q}y^{q-p}\equiv\operatorname*{const}.
\end{array}
\]
Since $p<q$, we have a singularity with holomorphic first integral at infinity
and a meromorphic first integral at the origin (as before). Going on with this
process, Euclid's algorithm assures that the resolution ends after the blowup
of a radial foliation. In particular, if $p=1$, then it is easy to see that
the\ principal projective line is transversal to just one projective line of
the divisor. Otherwise (i.e. if $p\neq1$) the singularity with meromorphic
first integral \textquotedblleft moves\textquotedblright\ to the
\textquotedblleft infinity\textquotedblright, i.e. it will appear in a corner
singularity. Then the principal projective line intersects exactly two
projective lines of the divisor.
\end{proof}

Let $\#\operatorname*{irred}(\widetilde{C})$ denote the number of irreducible
components of $\widetilde{C}$.

\begin{lemma}
\label{companion 2}Let $C$ be a non-commode quasi-homogeneous curve. Then its
minimal resolution tree is a linear chain having a principal projective line
such that $\#(\widetilde{\mathcal{C}}\cap D_{\operatorname*{pr}(\widetilde
{\mathcal{C}})})\leq$ $\#\operatorname*{irred}(\widetilde{\mathcal{C}})-1$.
Further $\widetilde{\mathcal{C}}\cap D_{j}=\emptyset$ whenever $D_{j}$ is
neither the principal projective line nor an end; i.e. $\mathcal{C}$ has one
of the following diagrams\ of resolution:%

{\includegraphics[
height=0.5621in,
width=2.3359in
]%
{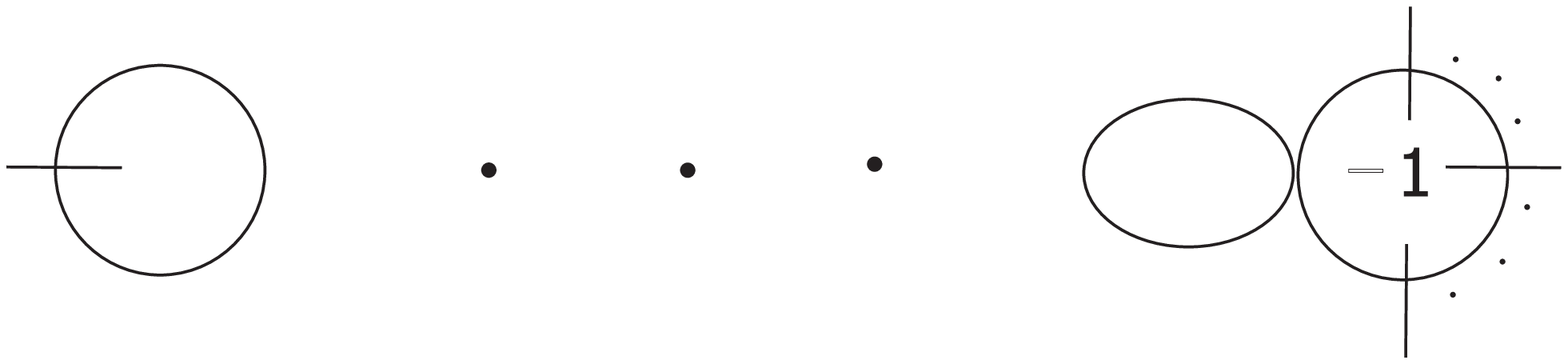}%
}
%

{\includegraphics[
trim=0.000000in 0.000000in -0.005302in 0.000000in,
height=0.5224in,
width=2.316in
]%
{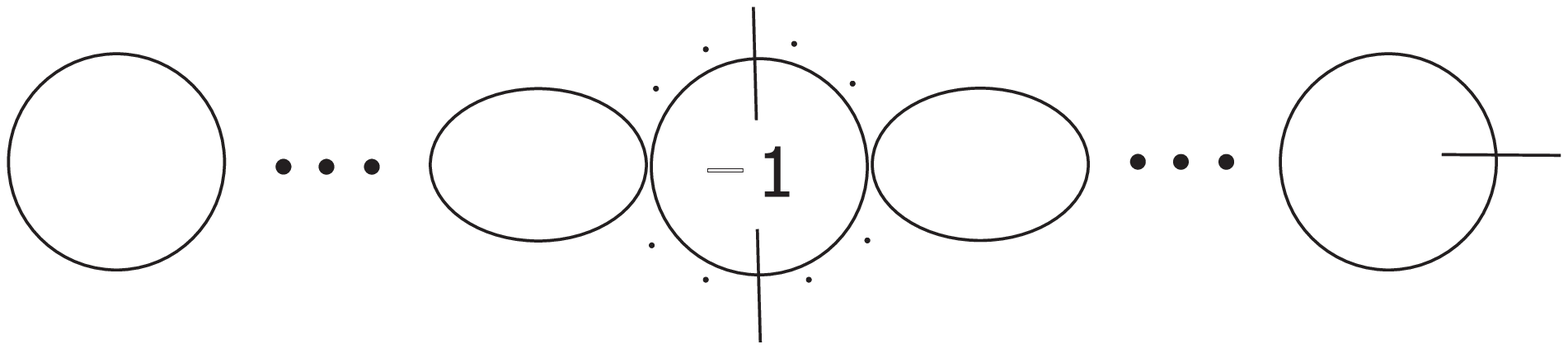}%
}
%

{\includegraphics[
trim=0.000000in 0.000000in 0.003124in 0.000000in,
height=0.512in,
width=2.3359in
]%
{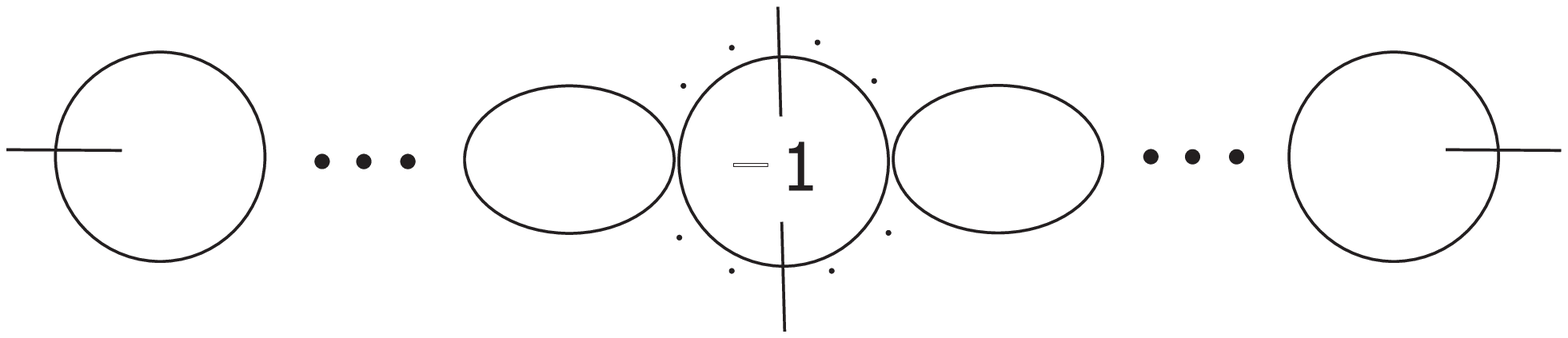}%
}

\end{lemma}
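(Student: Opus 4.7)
The strategy is to exhibit the fibration structure underlying the resolution of a non-commode quasi-homogeneous $C$, reducing it to Lemma~\ref{companion fibration} applied to its commode part. By Lemma~\ref{general decomp.}, write the reduced equation as $f=\mu\,x^m y^n\prod_{\ell=1}^{k}(y^p-\lambda_\ell x^q)$ with $\mu,\lambda_\ell\in\mathbb{C}^{\ast}$, $\gcd(p,q)=1$, $m,n\in\{0,1\}$, and $m+n\geq 1$ (the non-commode hypothesis). The commode factor $g=\prod_{\ell}(y^p-\lambda_\ell x^q)$ consists of $k$ generic fibers of the meromorphic fibration $\Phi(x,y)=y^p/x^q$ with values $\lambda_1,\dots,\lambda_k\in\mathbb{C}^{\ast}$, while the axes $y=0$ (if $n=1$) and $x=0$ (if $m=1$) are precisely the degenerate fibers of $\Phi$ with values $0$ and $\infty$. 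The degenerate case $k=0$ reduces $C$ to at most two transverse coordinate axes, handled by a single blow-up; henceforth assume $k\geq 1$.

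By Lemma~\ref{companion fibration} applied to $\{g=0\}$, its standard resolution is a linear chain $D_1,\dots,D_N$ with a principal projective line $D_{\operatorname{pr}}$ serving as the parameter space of the resolved fibration $\widetilde{\Phi}$: every strict transform of a generic fiber $y^p-\lambda_\ell x^q=0$ meets $D_{\operatorname{pr}}$ transversally at the unique point corresponding to the value $\lambda_\ell$, while no other component of $D$ is met. The heart of the proof is to track the two degenerate fibers through the same sequence of Euclidean-algorithm blow-ups and verify that their strict transforms attach to the two end components of the chain, corresponding to the fiber values $0$ (where $y=0$ lands) and $\infty$ (where $x=0$ lands).

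I would carry this out inductively on the steps of the Euclidean algorithm applied to $(p,q)$: after the first blow-up, the strict transforms of $y=0$ and $x=0$ appear at the two distinct coordinate-direction points of $E_1$, corresponding to the values $0$ and $\infty$ of $\widetilde{\Phi}$; each subsequent blow-up is performed at a single corner where $\widetilde{\Phi}$ still has an indeterminacy, so any degenerate fiber lying away from that corner remains permanently attached to its current projective line. When $p=1$, one must additionally handle the fact that $y=0$ and the generic commode branches are mutually tangent at $E_1$, requiring further blow-ups to separate them; these are precisely the blow-ups already demanded by the Euclidean algorithm in the resolution of the commode part, so no truly extra blow-ups are forced by the presence of the axes.

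With this in hand, the chain structure from Lemma~\ref{companion fibration} is preserved, the commode branches meet only $D_{\operatorname{pr}}$, and the axes meet only ends, so $\widetilde{C}\cap D_j=\emptyset$ whenever $D_j$ is neither the principal projective line nor an end. The axis branches, attaching at ends distinct from $D_{\operatorname{pr}}$, contribute to $\#\operatorname{irred}(\widetilde{C})$ without contributing to $\#(\widetilde{C}\cap D_{\operatorname{pr}})$, yielding the inequality $\#(\widetilde{C}\cap D_{\operatorname{pr}})\leq\#\operatorname{irred}(\widetilde{C})-1$; the three diagrams in the statement correspond to the configurations $(m,n)=(1,0),(0,1),(1,1)$. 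The main obstacle will be the inductive tracking of the $0$- and $\infty$-ends through the Euclidean algorithm, particularly the borderline case $p=1$ in which $D_{\operatorname{pr}}$ itself coincides with one end of the chain; there I would separately verify, using the explicit charts appearing in the proof of Lemma~\ref{companion fibration}, that the relevant axis attaches at a point of $D_{\operatorname{pr}}$ distinct from every commode-branch intersection.
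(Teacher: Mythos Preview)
Your proposal is correct and follows essentially the same route as the paper: both proofs invoke Lemma~\ref{general decomp.} to write $f=\mu x^m y^n\prod(y^p-\lambda_\ell x^q)$, observe that every branch (including the axes, as the degenerate fibers over $0$ and $\infty$) is a fiber of $y^p/x^q\equiv\text{const}$, and then appeal to Lemma~\ref{companion fibration} for the structure of the resolution. The paper's argument is terser---it simply notes that $x^m y^n$ is already resolved after one blow-up, so the full curve is resolved together with the fibration---whereas you spell out more explicitly the inductive tracking of the $0$- and $\infty$-fibers to the ends of the chain and flag the $p=1$ borderline; this extra care is justified but does not constitute a different method.
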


\begin{proof}
From Lemma \ref{general decomp.}, there is a local system of coordinates
$(x,y)$ such that $\mathcal{C}=f^{-1}(0)$ where $f(x,y)=\mu x^{m}y^{n}%
\prod_{l=1}^{k}(y^{p}-\lambda_{j}x^{q})$, $p<q$, and $\gcd(p,q)=1$. Since $\mu
x^{m}y^{n}$ is resolved after one blowup, then $f(x,y)$ is resolved together
with the fibration $\frac{y^{p}}{x^{q}}\equiv\operatorname*{const}$, as
before. Then the result follows from Lemma \ref{companion fibration}.
\end{proof}

\section{Quasi-homogeneous curves}

We consider each case separately and prove Theorem ~A in a series of lemmas.

\subsection{Curves of type $(1,1,n)$.}

In this case the curve is given as the zero set of a polynomial of the form
$f(x,y)=x^{m}%
{\displaystyle\prod\limits_{j=1}^{n}}
(y-\lambda_{j}x)$ where $m\in\mathbb{Z}_{2}$, and $\lambda_{j}\in\mathbb{C}$;
in particular it is resolved after one blowup. Thus, given $\lambda
=(\lambda_{1},\cdots,\lambda_{n})\in\mathbb{P}_{\Delta}^{1}(n)$ we define
$f_{\lambda}(x,y)=x%
{\displaystyle\prod\limits_{j\neq i}}
(y-\lambda_{j}x)$ if $\lambda_{i}=\infty$ or $f_{\lambda}(x,y)=%
{\displaystyle\prod\limits_{j=1}^{n}}
(y-\lambda_{j}x)$ if $\lambda_{j}\neq\infty$ for all $j=1,\ldots,k$. We denote
the curve $f_{\lambda}=0$ by $C_{\lambda}$. Recall that the natural action of
$\operatorname*{PSL}(2,\mathbb{C})$ in $\mathbb{P}^{1}$ as the group of
homographies induces a natural action of $\operatorname*{PSL}(2,\mathbb{C})$
in $\operatorname*{Symm}(\mathbb{P}_{\Delta}^{1}(n))$. Further, recall that
the equivalence class of $\lambda\in$ $\mathbb{P}_{\Delta}^{1}(n)$ in
$\operatorname*{Symm}(\mathbb{P}_{\Delta}^{1}(n))/\operatorname*{PSL}%
(2,\mathbb{C})$ is denoted by $[\lambda]$.

\begin{lemma}
\label{(1,1)}Two homogeneous curves $C_{\lambda}$ and $C_{\mu}$ are
analytically equivalent if and only if $[\lambda]=[\mu]\in\operatorname*{Symm}%
(\mathbb{P}_{\Delta}^{1}(n))/\operatorname*{PSL}(2,\mathbb{C})$.
\end{lemma}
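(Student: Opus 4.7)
The plan is to prove the two implications of the lemma separately, using the concrete fact that a curve of type $(1,1,n)$ is a union of $n$ distinct lines through the origin, with the parameters $\lambda_j\in\mathbb{P}^1$ encoding the slopes (the value $\infty$ corresponding to the axis $x=0$ whenever $m=1$). Under this identification, the natural $\operatorname*{PSL}(2,\mathbb{C})$-action on $\mathbb{P}^1$ coincides with the action of $\operatorname*{GL}(2,\mathbb{C})/\mathbb{C}^\ast$ on the set of lines through the origin, and the resolution of each $C_\lambda$ is accomplished by a single blowup $\pi$.

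For the sufficiency direction, I would suppose that $[\lambda]=[\mu]$, so that there exist $g\in\operatorname*{PSL}(2,\mathbb{C})$ and $\sigma\in S_n$ with $g(\lambda_j)=\mu_{\sigma(j)}$ for every $j$. Choosing any representative $\Phi\in\operatorname*{GL}(2,\mathbb{C})$ of $g$, the linear automorphism $\Phi$ of $(\mathbb{C}^2,0)$ sends the line of slope $\lambda_j$ to the line of slope $g(\lambda_j)=\mu_{\sigma(j)}$. Hence $\Phi(C_\lambda)=C_\mu$ as germs at $0$, providing the required analytic equivalence.

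For the necessity direction, I would start with a germ of biholomorphism $\varphi:(\mathbb{C}^2,0)\to(\mathbb{C}^2,0)$ mapping $C_\lambda$ to $C_\mu$, and lift it through the blowup $\pi$ to a germ of biholomorphism $\widetilde\varphi$ near the exceptional divisor $D\cong\mathbb{P}^1$, which necessarily preserves $D$. The restriction $g:=\widetilde\varphi|_D$ is then an automorphism of $\mathbb{P}^1$, hence an element of $\operatorname*{PSL}(2,\mathbb{C})$ (concretely, $g$ is the projectivization of $d\varphi_0$). Since the strict transforms $\widetilde C_\lambda$ and $\widetilde C_\mu$ are transverse to $D$ at the points $\{\lambda_j\}$ and $\{\mu_j\}$ respectively, and $\widetilde\varphi$ sends $\widetilde C_\lambda$ to $\widetilde C_\mu$, one obtains $g(\{\lambda_j\})=\{\mu_j\}$ as unordered subsets of $\mathbb{P}^1$, i.e.\ $[\lambda]=[\mu]$.

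The argument is essentially routine; the only point requiring care is the bookkeeping associated with the point at infinity, since when some $\lambda_j=\infty$ the corresponding line is the axis $x=0$. Once one verifies that $\operatorname*{PSL}(2,\mathbb{C})$ acts on $\mathbb{P}^1$ with no distinguished point, both implications go through uniformly in the cases $m=0$ and $m=1$, and no deeper input beyond the identification $\operatorname*{Aut}(\mathbb{P}^1)=\operatorname*{PSL}(2,\mathbb{C})$ and the functoriality of blowing up is needed.
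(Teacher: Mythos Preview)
Your proposal is correct and follows essentially the same approach as the paper's proof: lift an analytic equivalence through a single blowup to obtain a M\"obius transformation on the exceptional divisor carrying $\{\lambda_j\}$ to $\{\mu_j\}$, and conversely realize any such M\"obius transformation by a linear map of $(\mathbb{C}^2,0)$. The only cosmetic difference is that the paper writes down the linear map explicitly as $T(x,y)=(dx+cy,bx+ay)$ for $\varphi(z)=\tfrac{az+b}{cz+d}$, whereas you invoke the identification $\operatorname{PSL}(2,\mathbb{C})\cong\operatorname{GL}(2,\mathbb{C})/\mathbb{C}^\ast$ abstractly.
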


\begin{proof}
Suppose $C_{\lambda}$ and $C_{\mu}$ are analytically equivalent and let
$\Phi\in Dif(\mathbb{C}^{2},0)$ take $C_{\lambda}$ into $C_{\mu}$. Let
$\widetilde{\Phi}$ be the blowup of $\Phi$, then it takes the strict transform
of $C_{\lambda}$ into the strict transform of $C_{\mu}$. Blowing up
$f_{\lambda}$ and $f_{\mu}$ we obtain at once that the first tangent cones of
$C_{\lambda}$ and $C_{\mu}$ are respectively given by $\{\lambda_{1}%
,\cdots,\lambda_{n}\}$ and $\{\mu_{1},\cdots,\mu_{n}\}$. Therefore, there is
$\sigma\in S_{n}$ such that the M\"{o}bius transformation $\varphi=\left.
\widetilde{\Phi}\right\vert _{\mathbb{P}^{1}}$ satisfies $\mu_{\sigma
(j)}=\varphi(\lambda_{j})$ for all $j=1,\ldots,n$. In other words
$[\lambda]=[\mu]$. Conversely, suppose $[\lambda]=[\mu]$. Reordering the
indexes of $\{\mu_{1},\cdots,\mu_{n}\}$ we may suppose, without loss of
generality, that there is a M\"{o}bius transformation $\varphi(z)=\frac
{az+b}{cz+d}$, with $ad-bc=1$, such that $\mu_{j}=\varphi(\lambda_{j})$ for
all $j=1,\ldots,n$. Now consider the linear transformation
$T(x,y)=(dx+cy,bx+ay)$ with inverse $T^{-1}(x,y)=(ax-cy,-bx+dy)$. Then a
straightforward calculation shows that $f_{\lambda}=\alpha\cdot T^{\ast}%
f_{\mu}$ where $\alpha\in\mathbb{C}^{\ast}$. Thus $C_{\lambda}$ is
analytically equivalent to $C_{\mu}$, as desired.
\end{proof}

\begin{remark}
\label{three branches}Recall that for any three distinct points $\{\lambda
_{1},\lambda_{2},\lambda_{3}\}\subset\mathbb{P}^{1}$ there is a M\"{o}bius
transformation $\varphi$ such that $\varphi(0)=\lambda_{1}$, $\varphi
(1)=\lambda_{2}$ and $\varphi(\infty)=\lambda_{3}$.
\end{remark}

As a straightforward consequence of Lemma \ref{(1,1)} and Remark
\ref{three branches} one has:

\begin{corollary}
Let $\lambda,\mu\in\mathbb{P}_{\Delta}^{1}(n)$ with $n\leq3$. Then
$C_{\lambda}$ and $C_{\mu}$ are analytically equivalent.
\end{corollary}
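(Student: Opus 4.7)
The plan is to combine Lemma~\ref{(1,1)} with Remark~\ref{three branches} in a direct way. By Lemma~\ref{(1,1)}, it suffices to prove that $[\lambda]=[\mu]$ in $\operatorname{Symm}(\mathbb{P}_{\Delta}^{1}(n))/\operatorname{PSL}(2,\mathbb{C})$, i.e.\ that there is a Möbius transformation $\varphi\in\operatorname{PSL}(2,\mathbb{C})$ and a permutation $\sigma\in S_{n}$ with $\varphi(\lambda_{j})=\mu_{\sigma(j)}$ for every $j$. So the entire content is the assertion that $\operatorname{PSL}(2,\mathbb{C})$ acts transitively on unordered $n$-tuples of pairwise distinct points of $\mathbb{P}^{1}$ whenever $n\leq 3$, which is precisely the sharp transitivity of Möbius transformations on triples.

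In the case $n=3$, I would apply Remark~\ref{three branches} twice: choose a Möbius map $\varphi_{\lambda}$ sending the (ordered) triple $(\lambda_{1},\lambda_{2},\lambda_{3})$ to $(0,1,\infty)$, and another $\varphi_{\mu}$ sending $(\mu_{1},\mu_{2},\mu_{3})$ to $(0,1,\infty)$. Then $\varphi:=\varphi_{\mu}^{-1}\circ\varphi_{\lambda}$ lies in $\operatorname{PSL}(2,\mathbb{C})$ and satisfies $\varphi(\lambda_{j})=\mu_{j}$ for $j=1,2,3$, so $[\lambda]=[\mu]$ with the trivial permutation. For $n=2$ and $n=1$ I would reduce to the case $n=3$ by completing the tuples: pick any point $\lambda_{3}\in\mathbb{P}^{1}\setminus\{\lambda_{1},\lambda_{2}\}$ and any $\mu_{3}\in\mathbb{P}^{1}\setminus\{\mu_{1},\mu_{2}\}$ (and similarly two auxiliary points in the $n=1$ case), apply the $n=3$ argument to the augmented triples, and observe that a Möbius transformation sending the augmented triple of $\lambda$ to that of $\mu$ in particular sends $\{\lambda_{1},\ldots,\lambda_{n}\}$ to $\{\mu_{1},\ldots,\mu_{n}\}$ as unordered sets.

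There is really no obstacle here: once Lemma~\ref{(1,1)} is available, the corollary is an immediate consequence of the classical $3$-transitivity of $\operatorname{PSL}(2,\mathbb{C})$ on $\mathbb{P}^{1}$, encoded in Remark~\ref{three branches}. The only point to verify carefully is that in the cases $n=1,2$ one is allowed to \emph{choose} the auxiliary points freely (which is possible because $\mathbb{P}^{1}$ is infinite), so that the completion step does not impose any extra constraint. Consequently, for $n\leq 3$ the quotient $\operatorname{Symm}(\mathbb{P}_{\Delta}^{1}(n))/\operatorname{PSL}(2,\mathbb{C})$ is a single point, and every two curves $C_{\lambda}$ and $C_{\mu}$ of type $(1,1,n)$ with $n\leq 3$ are analytically equivalent.
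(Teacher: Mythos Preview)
Your proof is correct and follows exactly the approach the paper indicates: the corollary is stated as a straightforward consequence of Lemma~\ref{(1,1)} and Remark~\ref{three branches}, and you have simply spelled out the details of that deduction. The reduction to the $3$-transitivity of $\operatorname{PSL}(2,\mathbb{C})$ and the completion argument for $n<3$ are precisely what is intended.
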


\subsection{Curves of type $(1,q,n)$, $q\geq2$.}

In this case, the curve is given as the zero set of a polynomial of the form
$f_{m,\lambda}(x,y)=x^{m}%
{\displaystyle\prod\limits_{j=1}^{n}}
(y-\lambda_{j}x^{q})$ where $m\in\mathbb{Z}_{2}$, $q\in\mathbb{Z}_{+}$,
$q\geq2$, and $\lambda_{j}\in\mathbb{C}$. Thus given $m\in\mathbb{Z}_{2}$ and
$\lambda=(\lambda_{1},\cdots,\lambda_{n})\in\mathbb{C}_{\Delta}(n)$, we denote
a curve of type $(1,q,n)$ by $C_{m,\lambda}$ if it is given as the zero set of
$f_{m,\lambda}$. Recall that the group of affine transformations of
$\mathbb{C}$, denoted by $\operatorname*{Aff}(\mathbb{C})$, acts in a natural
way in $\operatorname*{Symm}(\mathbb{C}_{\Delta}(n))$. Further, recall that
the equivalence class of $\lambda\in$ $\mathbb{C}_{\Delta}(n)$ in
$\operatorname*{Symm}(\mathbb{C}_{\Delta}(n))/\operatorname*{Aff}(\mathbb{C})$
is denoted by $[\lambda]$.

\begin{lemma}
\label{(1,q)}Two homogeneous curves $C_{m,\lambda}$ and $C_{m,\mu}$ are
analytically equivalent if and only if $[\lambda]=[\mu]\in\operatorname*{Symm}%
(\mathbb{C}_{\Delta}(n))/\operatorname*{Aff}(\mathbb{C})$.
\end{lemma}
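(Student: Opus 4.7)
The plan is to follow the pattern of Lemma \ref{(1,1)} but using the more elaborate resolution data supplied by Lemma \ref{companion fibration}. Here the branches $y-\lambda_j x^q=0$ are pairwise tangent to $y=0$ with contact of order $q\geq 2$, so a single blowup no longer separates them; Euclid's algorithm applied to the pair $(1,q)$ shows that exactly $q$ blowups are required. The outcome is a linear chain $D_1,\dots,D_q$ of rational curves with self-intersection sequence $-q,-(q-1),\dots,-2,-1$, and in a suitable affine chart near the last line $D_q$ the $n$ strict transforms of the branches $y-\lambda_j x^q=0$ meet $D_q$ transversely at the $n$ distinct points $\lambda_1,\dots,\lambda_n\in\mathbb{C}\subset D_q$. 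The corner $D_q\cap D_{q-1}$ corresponds in this chart to $\infty$, playing the role assumed in the homogeneous case by the point at infinity of $\mathbb{P}^1$.

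For the direct implication, assume $\Phi\in Dif(\mathbb{C}^2,0)$ sends $C_{m,\lambda}$ to $C_{m,\mu}$ and lift to the standard resolution to obtain a biholomorphism $\widetilde{\Phi}$ of a neighborhood of $D$ mapping $D$ to itself and the strict transform of $C_{m,\lambda}$ to that of $C_{m,\mu}$. Since the self-intersection numbers along the chain are pairwise distinct (and since, in the non-commode case, the numbers of branches attached at the two ends already differ), the weighted dual graph admits no nontrivial automorphism. Therefore $\widetilde{\Phi}$ must fix every $D_j$ setwise; in particular its restriction to $D_q\cong\mathbb{P}^1$ is a biholomorphism fixing the corner point, hence an element of $\operatorname*{Aff}(\mathbb{C})$ in the affine chart above. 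This restriction sends $\{\lambda_1,\dots,\lambda_n\}$ bijectively to $\{\mu_1,\dots,\mu_n\}$, so $[\lambda]=[\mu]$ in $\operatorname*{Symm}(\mathbb{C}_{\Delta}(n))/\operatorname*{Aff}(\mathbb{C})$.

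For the converse, given $\varphi(z)=az+b\in\operatorname*{Aff}(\mathbb{C})$ and $\sigma\in S_n$ with $\mu_{\sigma(j)}=\varphi(\lambda_j)$, I propose the explicit map $T(x,y)=(x,\,ay+bx^q)$; its Jacobian at the origin has determinant $a\neq 0$, so $T$ is a germ of biholomorphism of $(\mathbb{C}^2,0)$. A short computation gives
\[
T^{\ast}(y-\mu_{\sigma(j)}x^q)=ay+bx^q-(a\lambda_j+b)x^q=a(y-\lambda_j x^q),
\]
and since $T^{\ast}x^m=x^m$ we obtain $T^{\ast}f_{m,\mu}=a^n f_{m,\lambda}$, so $T$ realizes the analytic equivalence $C_{m,\lambda}\sim C_{m,\mu}$.

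The main obstacle is the rigidity step used in the forward direction: one must justify that $\widetilde{\Phi}$ preserves each component of $D$ individually and fixes the corner of the principal line. This is precisely where the fine structure coming from Lemma \ref{companion fibration} (the linear-chain shape, the strictly decreasing self-intersection sequence, and the asymmetric distribution of attached branches) is essential, since otherwise one could a priori have an automorphism that reverses or otherwise permutes the chain and would not restrict to an affine transformation of $D_q$.
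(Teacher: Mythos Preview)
Your approach is essentially the paper's: resolve, argue that the lifted biholomorphism restricts to a homography of the principal component fixing the corner (hence an affine map), and for the converse use the explicit $T(x,y)=(x,ay+bx^q)$, which is exactly what the paper does.

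There is, however, a factual error in your forward direction. The self-intersection sequence of the chain $D_1,\dots,D_q$ is \emph{not} $-q,-(q-1),\dots,-2,-1$. For weights $(1,q)$ the Euclidean algorithm terminates in a single step, and following the blowups (or reading off the paper's Corollary on self-intersection numbers) one finds $D_j\cdot D_j=-2$ for $j<q$ and $D_q\cdot D_q=-1$. So your claim that the self-intersections are pairwise distinct is false, and the rigidity argument as written does not go through. The conclusion you want is still correct, but for a different reason: the only nontrivial automorphism of a linear chain is the end-to-end reversal, and reversal is ruled out because the two ends carry different weights ($-2$ versus $-1$).

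The paper avoids this computation altogether by arguing locally rather than globally: it stops at step $q-1$, where the lift $\Phi^{(q-1)}$ is a germ of diffeomorphism at a point of the exceptional divisor $D^{(q-1)}$ and therefore automatically preserves the local branch $(x=0)$; one further blowup then forces the restriction to $D_q$ to fix the corner $\infty$. This is more economical since it never requires knowing the full intersection matrix of the chain.
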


\begin{proof}
Suppose $\Phi\in Diff(\mathbb{C}^{2},0)$ is an equivalence between
$C_{m,\lambda}$ and $C_{m,\mu}$. From the proof of Lemma
\ref{companion fibration}, both curves are resolved after $q$ blowups.
Further, after $q-1$ blowups $\Phi$ will be lifted to a local conjugacy
$\Phi^{(q-1)}$ between the germs of curves given in local coordinates $(x,y)$
respectively by $p_{\lambda}(x,y)=x%
{\displaystyle\prod\limits_{j=1}^{n}}
(y-\lambda_{j}x)$ and $p_{\mu}(x,y)=x%
{\displaystyle\prod\limits_{j=1}^{n}}
(y-\mu_{j}x)$ where $(x=0)$ is the local equation of the exceptional divisor
$D^{(q-1)}$. Let $\pi$ denote a further blowup given in local coordinates by
$\pi(t,x)=(x,tx)$ and $\pi(u,y)=(u,uy)$, and $\Phi^{(q)}$ be the map obtained
by the lifting of $\Phi^{(q-1)}$ by $\pi$. Further, let $\varphi=\left.
\Phi^{(q)}\right\vert _{D_{q}}$ where $D_{q}=\pi^{-1}(0)$. Since $\Phi^{(q)}$
preserves the irreducible components of $\pi^{\ast}(D^{(q-1)})$, then
$\varphi(t)=\Phi^{(q)}(t,0)$ is a homography fixing $\infty$ and conjugating
the first tangent cones of $p_{\lambda}=0$ and $p_{\mu}=0$ respectively. Thus
$[\lambda]=[\mu]\in\operatorname*{Symm}(\mathbb{C}_{\Delta}%
(n))/\operatorname*{Aff}(\mathbb{C})$. Conversely, (reordering the indexes of
$\mu$, if necessary) suppose there is $\varphi(z)=az+b\in\operatorname*{Aff}%
(\mathbb{C})$ such that $\mu_{j}=\varphi(\lambda_{j})$ for all $j=1,\ldots,n$,
and let $T(x,y)=(x,ay+bx^{q})$. Then a straightforward calculation shows that
$f_{m,\lambda}=\alpha\cdot T^{\ast}f_{m,\mu}$ where $\alpha\in\mathbb{C}%
^{\ast}$. Thus $C_{m,\lambda}$ and $C_{m,\mu}$ are analytically equivalent, as desired.
\end{proof}

As a straightforward consequence of Lemma \ref{(1,q)} and Remark
\ref{three branches} one has:

\begin{corollary}
Let $\lambda,\mu\in\mathbb{C}_{\Delta}(n)$ with $n\leq2$. Then $C_{m,\lambda}$
and $C_{m,\mu}$ are analytically equivalent.
\end{corollary}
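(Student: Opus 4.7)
By Lemma \ref{(1,q)}, it is enough to show that $\operatorname{Aff}(\mathbb{C})$ acts transitively on $\operatorname{Symm}(\mathbb{C}_{\Delta}(n))$ whenever $n\leq 2$, for then every class $[\lambda]\in\operatorname{Symm}(\mathbb{C}_{\Delta}(n))/\operatorname{Aff}(\mathbb{C})$ reduces to a single point and the hypothesis $[\lambda]=[\mu]$ is automatic.

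The case $n=1$ is immediate: given $\lambda_{1},\mu_{1}\in\mathbb{C}$ the translation $\varphi(z)=z+(\mu_{1}-\lambda_{1})$ takes $\lambda_{1}$ to $\mu_{1}$, so $[\lambda]=[\mu]$. For $n=2$, the plan is to obtain the desired affine map as a consequence of Remark \ref{three branches}. I would first observe that $\operatorname{Aff}(\mathbb{C})$ is exactly the stabilizer of $\infty$ inside $\operatorname{PSL}(2,\mathbb{C})$ under the standard action on $\mathbb{P}^{1}$. Given $\lambda=(\lambda_{1},\lambda_{2})$ and $\mu=(\mu_{1},\mu_{2})$ in $\mathbb{C}_{\Delta}(2)$, apply Remark \ref{three branches} twice to produce M\"{o}bius transformations $\varphi_{1}$ and $\varphi_{2}$ sending $(0,1,\infty)$ to $(\lambda_{1},\lambda_{2},\infty)$ and $(\mu_{1},\mu_{2},\infty)$ respectively. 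Both $\varphi_{1}$ and $\varphi_{2}$ fix $\infty$, hence belong to $\operatorname{Aff}(\mathbb{C})$, and so does $\varphi:=\varphi_{2}\circ\varphi_{1}^{-1}$. By construction $\varphi(\lambda_{j})=\mu_{j}$ for $j=1,2$, which gives $[\lambda]=[\mu]$ in $\operatorname{Symm}(\mathbb{C}_{\Delta}(2))/\operatorname{Aff}(\mathbb{C})$.

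Applying Lemma \ref{(1,q)} with this equality yields the analytic equivalence between $C_{m,\lambda}$ and $C_{m,\mu}$. There is no substantial obstacle: the only point requiring a small remark is that the value of $m\in\mathbb{Z}_{2}$ is the same on both sides, so the factor $x^{m}$ is preserved automatically by any linear change of coordinates of the form $T(x,y)=(x,ay+bx^{q})$ exhibited in the proof of Lemma \ref{(1,q)}.
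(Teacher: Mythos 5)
Your argument is correct and is precisely the derivation the paper intends: the corollary is stated there as an immediate consequence of Lemma \ref{(1,q)} together with Remark \ref{three branches}, and your observation that $\operatorname{Aff}(\mathbb{C})$ is the stabilizer of $\infty$ in $\operatorname{PSL}(2,\mathbb{C})$, so that the two M\"obius maps sending $(0,1,\infty)$ to $(\lambda_{1},\lambda_{2},\infty)$ and $(\mu_{1},\mu_{2},\infty)$ are affine and compose to the required element of $\operatorname{Aff}(\mathbb{C})$, fills in the ``straightforward'' step exactly as intended. No gaps.
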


\subsection{Curves of type $(p,q,n)$, $2\leq p<q$.}

In this case, the curve is given as the zero set of a polynomial of the form
$f_{m,k,\lambda}(x,y)=x^{m}y^{k}%
{\displaystyle\prod\limits_{j=1}^{n}}
(y^{p}-\lambda_{j}x^{q})$ where $m,k=0,1$, $p,q\in\mathbb{Z}_{+}$, $2\leq
p<q$, and $\lambda_{j}\in\mathbb{C}^{\ast}$. Thus given $\lambda=(\lambda
_{1},\cdots,\lambda_{n})\in\mathbb{C}_{\Delta}^{\ast}(n)$ we denote a curve of
type $(p,q,n)$ by $C_{m,k,\lambda}$ if it is given as the zero set of
$f_{m,k,\lambda}(x,y)$. Recall that the group of linear transformations of
$\mathbb{C}$, denoted by $\operatorname*{GL}(1,\mathbb{C})$, acts in a natural
way in $\operatorname*{Symm}(\mathbb{C}_{\Delta}^{\ast}(n))$. Further, recall
that the equivalence class of $\lambda\in$ $\mathbb{C}_{\Delta}^{\ast}(n)$ in
$\operatorname*{Symm}(\mathbb{C}_{\Delta}^{\ast}(n))/\operatorname*{GL}%
(1,\mathbb{C})$ is denoted by $[\lambda]$.

\begin{lemma}
\label{(p,q)}Two homogeneous curves $C_{m,k,\lambda}$ and $C_{m,k,\mu}$ are
analytically equivalent if and only if $[\lambda]=[\mu]\in\operatorname*{Symm}%
(\mathbb{C}_{\Delta}^{\ast}(n))/\operatorname*{GL}(1,\mathbb{C})$.
\end{lemma}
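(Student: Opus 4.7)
The plan is to prove the two directions separately. For sufficiency, I would mimic the constructions in \lemref{(1,1)} and \lemref{(1,q)}: if $[\lambda]=[\mu]$, then, after relabeling, $\mu_{j}=a\lambda_{j}$ for some $a\in\mathbb{C}^{\ast}$, and the diagonal linear map $T(x,y)=(\beta x,\gamma y)$ with $\beta\in\mathbb{C}^{\ast}$ arbitrary and $\gamma$ chosen so that $\gamma^{p}=a\beta^{q}$ (any $p$-th root) satisfies $T^{\ast}(y^{p}-\mu_{j}x^{q})=\gamma^{p}(y^{p}-\lambda_{j}x^{q})$. A direct computation then shows that $T^{\ast}f_{m,k,\mu}$ equals a nonzero scalar multiple of $f_{m,k,\lambda}$.

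For necessity, the natural continuation of the previous lemmas would lift $\Phi\in\operatorname{Diff}(\mathbb{C}^{2},0)$ through the standard resolution and read off a M\"obius transformation $\varphi$ on the principal projective line $D_{\operatorname{pr}}$. However, since $2\leq p<q$ implies $D_{\operatorname{pr}}$ has \emph{two} corners, one would additionally need to verify that $\varphi$ fixes (rather than swaps) them before concluding it is linear. Rather than attacking this combinatorially through the weighted dual graph, I would bypass the issue by analyzing $\Phi=(X,Y)$ directly via the Puiseux parametrizations of the branches.

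Concretely, parametrize the $j$-th branch of $C_{m,k,\lambda}$ by $(x,y)=(t^{p},\lambda_{j}^{1/p}t^{q})$, so its image under $\Phi$ must lie on a branch of $C_{m,k,\mu}$, i.e.\ $\Phi(t^{p},\lambda_{j}^{1/p}t^{q})=(s^{p},\mu_{\sigma(j)}^{1/p}s^{q})$ for some $\sigma\in S_{n}$ and some $s=s(t)$. First, the tangent cone of $f_{m,k,\lambda}$ at the origin is $x^{m}y^{k+pn}$; since $k+pn\geq p\geq 2>1\geq m$, the line $\{y=0\}$ is distinguished in the tangent cone by having strictly largest multiplicity, so $d\Phi_{0}$ preserves $\{y=0\}$ and hence $\partial Y/\partial x(0,0)=0$. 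Second, from $Y(t^{p},\lambda_{j}^{1/p}t^{q})^{p}=\mu_{\sigma(j)}X(t^{p},\lambda_{j}^{1/p}t^{q})^{q}$ together with the observation that (using $\gcd(p,q)=1$ and $p\geq 2$) the only nonnegative integer solution to $ap+bq=q$ is $(a,b)=(0,1)$, one identifies the leading asymptotic of $Y$ along the branch as $d_{01}\lambda_{j}^{1/p}t^{q}$ with $d_{01}=\partial Y/\partial y(0,0)$, while $X\sim\alpha t^{p}$ with $\alpha=\partial X/\partial x(0,0)\neq 0$. Matching leading coefficients yields $\mu_{\sigma(j)}=(d_{01}^{p}/\alpha^{q})\lambda_{j}$, and since the scalar $a:=d_{01}^{p}/\alpha^{q}$ is independent of $j$, this gives $[\lambda]=[\mu]$.

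The main obstacle is the leading-order identification in the last step: monomials $d_{a0}x^{a}$ of $Y$ produce contributions $d_{a0}t^{ap}$ which would be of strictly lower order than $t^{q}$ whenever $ap<q$, and these must vanish. Their vanishing is forced by the identity $Y^{p}=\mu_{\sigma(j)}X^{q}$ (whose right-hand side has order exactly $pq$), but spelling this out requires a small inductive bookkeeping of the Taylor expansion of $\Phi$ against the branch relation, and this is the technical heart of the proof.
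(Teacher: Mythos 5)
Your proof is correct, and the sufficiency direction is essentially the paper's (the paper takes the special case $T(x,y)=(x,\sqrt[p]{a}\,y)$ of your diagonal family). For necessity, however, you take a genuinely different route. The paper lifts $\Phi$ through the standard resolution: at the $(N-1)$-th stage the curve becomes $xy\prod_j(y-\lambda_j x)$ with $x=0$ and $y=0$ local equations of (distinct, hence individually preserved) divisor components, so after the final blowup the induced map on the last projective line is a homography fixing $0$ and $\infty$, i.e.\ an element of $\operatorname{GL}(1,\mathbb{C})$ conjugating the tangent cones. This disposes of exactly the worry you raise about the two corners being swapped, since a lift of a germ of a diffeomorphism must preserve each irreducible component of the exceptional divisor. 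Your alternative via Puiseux parametrizations avoids the resolution entirely, and the ``technical heart'' you flag is in fact lighter than you suggest: from $Y^{p}=\mu_{\sigma(j)}X^{q}$ along the branch and $\operatorname{ord}_{t}X=p$ (so $\operatorname{ord}_{t}X^{q}=pq$) one gets $\operatorname{ord}_{t}Y=q$ at once, which kills every coefficient $d_{a0}$ with $ap<q$ in one stroke (no induction needed, since for $b\geq 1$ one has $ap+bq\geq q$, so the coefficient of $t^{ap}$ is exactly $d_{a0}$); then $ap+bq=q$ forces $(a,b)=(0,1)$ and the leading coefficients give $\mu_{\sigma(j)}=(d_{01}^{p}/\alpha^{q})\lambda_{j}$ uniformly in $j$. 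What your approach buys is an elementary, self-contained argument that does not depend on Lemma~\ref{companion fibration} or on tracking the divisor through $N$ blowups; what the paper's approach buys is uniformity with the $(1,1,n)$ and $(1,q,n)$ cases and a proof that visibly explains why the structure group drops from $\operatorname{PSL}(2,\mathbb{C})$ to $\operatorname{Aff}(\mathbb{C})$ to $\operatorname{GL}(1,\mathbb{C})$ as the number of fixed points on the principal projective line grows.
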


\begin{proof}
First recall from the proof of Lemma \ref{companion fibration} that
$C_{m,k,\lambda}$ is resolved after $N$ blowups, where $N$ depends on the
Euclid's division algorithm between $q$ and $p$. Further, in the $(N-1)^{th}$
step we have to blowup a singularity given in local coordinates $(x,y)$ as the
zero set of the polynomial $g_{\lambda}(x,y)=xy%
{\displaystyle\prod\limits_{j=1}^{n}}
(y-\lambda_{j}x)$. Therefore, if $\Phi\in Diff(\mathbb{C}^{2},0)$ is an
equivalence between $C_{m,k,\lambda}$ and $C_{m,k,\mu}$ and $\Phi^{(N-1)}$ is
its lifting to the $(N-1)^{th}$ step of the resolution, then it conjugates the
germs of curves given in local coordinates $(x,y)$ respectively by
$p_{\lambda}(x,y)=xy%
{\displaystyle\prod\limits_{j=1}^{n}}
(y-\lambda_{j}x)$ and $p_{\mu}(x,y)=xy%
{\displaystyle\prod\limits_{j=1}^{n}}
(y-\mu_{j}x)$ where $(x=0)$ and $(y=0)$ are local equations for the
exceptional divisor $D^{(N-1)}$. Let $\pi$ denote the final blowup of the
resolution given in local coordinates by $\pi(t,x)=(x,tx)$ and $\pi
(u,y)=(u,uy)$, and $\Phi^{(N)}$ be the map obtained by the lifting of
$\Phi^{(N-1)}$ by $\pi$. Further let $\varphi=\left.  \Phi^{(N)}\right\vert
_{D_{N}}$ where $D_{N}=\pi^{-1}(0)$. Since $\Phi^{(N)}$ preserves the
irreducible components of $\pi^{\ast}(D^{(q-1)})$, then $\varphi(t)=\Phi
^{(q)}(t,0)$ is a homography fixing $0$ and $\infty$, and conjugating the
first tangent cones of $p_{\lambda}=0$ and $p_{\mu}=0$ respectively. Thus
$[\lambda]=[\mu]\in\operatorname*{Symm}(\mathbb{C}_{\Delta}^{\ast
}(n))/\operatorname*{GL}(1,\mathbb{C})$. Conversely, (reordering the indexes
of $\mu$, if necessary) suppose there is $\varphi(z)=az\in\operatorname*{GL}%
(1,\mathbb{C})$ such that $\mu_{j}=\varphi(\lambda_{j})$ for all
$j=1,\ldots,n$, and let $T(x,y)=(x,\sqrt[p]{a}y)$. Then a straightforward
calculation shows that $f_{m,\lambda}=\alpha\cdot T^{\ast}f_{m,\mu}$ where
$\alpha\in\mathbb{C}^{\ast}$. Thus $C_{m,\lambda}$ and $C_{m,\mu}$ are
analytically equivalent, as desired.
\end{proof}

As a straightforward consequence of Lemma \ref{(p,q)} and Remark
\ref{three branches} one has:

\begin{corollary}
Let $\lambda,\mu\in\mathbb{C}_{\Delta}^{\ast}(1)$, then $C_{m,k,\lambda}$ and
$C_{m,k,\mu}$ are analytically equivalent.
\end{corollary}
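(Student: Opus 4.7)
The plan is to reduce the claim immediately to Lemma~\ref{(p,q)} by checking that, for $n=1$, the quotient $\operatorname{Symm}(\mathbb{C}_{\Delta}^{\ast}(1))/\operatorname{GL}(1,\mathbb{C})$ consists of a single point. First I would unpack the notation: $\mathbb{C}_{\Delta}^{\ast}(1)$ is just $\mathbb{C}^{\ast}$ (the ``distinctness'' condition is vacuous with a single coordinate), the symmetric quotient by $S_{1}$ is again $\mathbb{C}^{\ast}$, and $\operatorname{GL}(1,\mathbb{C})=\mathbb{C}^{\ast}$ acts by multiplication.

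Next I would observe that the action of $\mathbb{C}^{\ast}$ on $\mathbb{C}^{\ast}$ by multiplication is transitive: given any $\lambda,\mu\in\mathbb{C}^{\ast}$, the scalar $a=\mu/\lambda\in\mathbb{C}^{\ast}$ satisfies $\varphi(z)=az\in\operatorname{GL}(1,\mathbb{C})$ with $\varphi(\lambda)=\mu$. Hence $[\lambda]=[\mu]$ in $\operatorname{Symm}(\mathbb{C}_{\Delta}^{\ast}(1))/\operatorname{GL}(1,\mathbb{C})$. Invoking Lemma~\ref{(p,q)} then yields the analytic equivalence of $C_{m,k,\lambda}$ and $C_{m,k,\mu}$.

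There is no real obstacle here; the content is entirely in the transitivity of the linear action, and Lemma~\ref{(p,q)} does all the geometric work. If desired, one can make the equivalence explicit by following the converse direction of the proof of Lemma~\ref{(p,q)}: the map $T(x,y)=(x,\sqrt[p]{a}\,y)$ with $a=\mu/\lambda$ pulls $f_{m,k,\mu}$ back to a nonzero scalar multiple of $f_{m,k,\lambda}$, exhibiting the biholomorphism directly. The role of Remark~\ref{three branches} is nominal in this case, since one does not need to fix three points; the analogue for $n=1$ with a single-orbit target is automatic.
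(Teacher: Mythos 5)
Your argument is correct and is exactly what the paper intends: the corollary is stated as a direct consequence of Lemma~\ref{(p,q)} together with the transitivity of the relevant group action, which for $n=1$ is just $\operatorname{GL}(1,\mathbb{C})=\mathbb{C}^{\ast}$ acting transitively on $\mathbb{C}^{\ast}$, so $[\lambda]=[\mu]$ always holds. Your observation that the citation of Remark~\ref{three branches} is only nominal here is also accurate; the explicit map $T(x,y)=(x,\sqrt[p]{a}\,y)$ you extract from the converse direction of Lemma~\ref{(p,q)} is precisely the paper's mechanism.
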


\section{Resolution and factorization}

We study the relationship between the resolution tree and the factorization of
a quasi-homogeneous polynomial. We use the resolution in order to study the
equivalence between two quasi-homogeneous polynomials.

First recall that a quasi-homogeneous polynomial split uniquely in the form
$P=x^{m}y^{n}P_{0}$ where $P_{0}$ is a commode quasi-homogeneous polynomial.
In particular $P$ and $P_{0}$ share the same resolution process.

\begin{corollary}
Let $P\in\mathbb{C}[x,y]$ be a commode quasi-homogeneous polynomial with the
weights $(p,q)$, where $\gcd(p,q)=1$. Let $q_{j}=s_{j}p_{j}+r_{j}$,
$j=1,\ldots,m$, be the Euclid's algorithm of $(p,q)$, where $q_{1}:=q$,
$p_{1}:=p$, $q_{j+1}:=p_{j}$, and $p_{j+1}:=r_{j}$ for all $j=1,\ldots,m-1$.
Then the exceptional divisor of its minimal resolution is given by a linear
chain of projective lines, namely $D=\cup_{j=1}^{n}D_{j}$, whose
self-intersection numbers are given as follows:

\begin{enumerate}
\item If $m=2\alpha-1$, then
\[
D_{j}\cdot D_{j}=\left\{
\begin{array}
[c]{cl}%
-(s_{2k}+2) & \text{if }j=s_{1}+\cdots+s_{2k-1}\text{, }k=1,\ldots
,\alpha-1\text{;}\\
-1 & \text{if }j=s_{1}+\cdots+s_{2\alpha-1}\text{;}\\
-(s_{2k+1}+2) & \text{if }j=m-(s_{2}+\cdots+s_{2k-2})+1\text{, }%
k=1,\ldots,\alpha-1\text{;}\\
-(s_{2\alpha-1}+1) & \text{if }j=m-(s_{1}+\cdots+s_{2\alpha-2})+1\text{;}\\
-2 & \text{otherwise.}%
\end{array}
\right.
\]

\item If $m=2\alpha$, then
\[
D_{j}\cdot D_{j}=\left\{
\begin{array}
[c]{cl}%
-(s_{2k}+2) & \text{if }j=s_{1}+\cdots+s_{2k-1}\text{, }k=1,\ldots
,\alpha-1\text{;}\\
-(s_{2\alpha}+1) & \text{if }j=s_{1}+\cdots+s_{2\alpha-1}\text{;}\\
-(s_{2k+1}+2) & \text{if }j=m-(s_{2}+\cdots+s_{2k-2})+1\text{, }%
k=1,\ldots,\alpha-1\text{;}\\
-1 & \text{if }j=m-(s_{1}+\cdots+s_{2\alpha-2})+1\text{;}\\
-2 & \text{otherwise.}%
\end{array}
\right.
\]
Finally, if $C$ is given by $f=0$ where $f(x,y)=x^{m}y^{n}%
{\displaystyle\prod\limits_{j=1}^{k}}
(y^{p}-\lambda_{j}x^{q})$, then a representative of $[\lambda]$ is determined
by the intersection of the strict transform of $C$\ with the exceptional
divisor $D$.
\end{enumerate}
\end{corollary}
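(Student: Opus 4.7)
The plan is to reduce immediately to the commode case using Lemma \ref{first decomp.} (the factors $x^m$ and $y^n$ only produce separatrices that become transverse to the exceptional divisor after a single blow-up and do not alter the shape of that divisor). With $P$ commode of weights $(p,q)$, $\gcd(p,q)=1$, the proof of Lemma \ref{companion fibration} already organises the resolution into $m$ successive \emph{phases}: phase $j$ consists of $s_j$ consecutive blow-ups performed in a single affine chart, and consecutive phases swap charts because of the variable exchange $(q_{j+1},p_{j+1})=(p_j,r_j)$ given by Euclid. I would argue by induction on $m$; for $m=1$ we have $p=1$, all $s_1=q$ blow-ups take place in the same chart, and one obtains at once a chain $D_1-D_2-\cdots-D_{s_1}$ with self-intersections $-2,-2,\ldots,-2,-1$.

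The inductive step rests on two elementary blow-up rules: a blow-up at a smooth point of a $\mathbb{P}^1$ of self-intersection $a$ replaces it by an $(a-1)$-curve and creates a new $(-1)$-curve attached to it; a blow-up at the transverse intersection of two $\mathbb{P}^1$'s of self-intersections $a$ and $b$ replaces them by curves of self-intersections $a-1$ and $b-1$ and inserts a new $(-1)$-curve between them. Within phase $j\geq 2$, the first blow-up occurs at the corner of the newest divisor of phase $j-1$ with the \emph{junction divisor} $J_{j-1}$ created at the end of phase $j-1$, and each of the $s_j-1$ remaining blow-ups of phase $j$ is an iterated corner blow-up that again strikes $J_{j-1}$. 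Hence the self-intersection of $J_{j-1}$ drops by exactly $s_j$ during phase $j$, and by one more at the very first blow-up of phase $j+1$ whenever such a phase exists. Collecting these contributions yields $-(s_{j+1}+2)$ at every non-terminal junction, $-(s_m+1)$ at the junction immediately preceding the terminal phase, $-1$ at the newest divisor of the terminal phase, and $-2$ at every other divisor (blown up exactly once after its creation).

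The hard part is the combinatorial identification of these junctions as positions inside the final linear chain. Because the chart used during phase $j$ is opposite to that used during phase $j-1$, the chain is extended in alternating directions: the junction arising from an odd phase $2k-1$ ends up at cumulative position $s_1+s_2+\cdots+s_{2k-1}$ counted from one endpoint of the chain, while the junction arising from an even phase $2k$ ends up at cumulative position $s_2+s_4+\cdots+s_{2k}$ counted from the opposite endpoint, which can be rewritten in the form $j=m-(s_2+\cdots+s_{2k-2})+1$. Splitting according to the parity $m=2\alpha-1$ or $m=2\alpha$ of the terminal phase and matching this with the self-intersections of the previous paragraph then recovers the case analysis displayed in the statement.

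Finally, for the identification of $[\lambda]$, one observes that each factor $y^p-\lambda_jx^q$ of $f$ is itself a commode quasi-homogeneous curve of weights $(p,q)$ and hence, by Lemma \ref{companion fibration}, its strict transform meets the exceptional divisor only on the principal projective line, at exactly one point. In the affine chart used at the end of the proofs of Lemmas \ref{(1,1)}, \ref{(1,q)} and \ref{(p,q)}, in which the principal line carries local coordinate $t$, a direct substitution gives that the strict transform of $y^p-\lambda_jx^q$ meets $D_{\operatorname*{pr}(\widetilde{C})}$ at the point $t=\lambda_j$. Therefore the unordered collection of these intersection points, modulo the corresponding action of $\operatorname*{PSL}(2,\mathbb{C})$, $\operatorname*{Aff}(\mathbb{C})$ or $\operatorname*{GL}(1,\mathbb{C})$, recovers the class $[\lambda]$ already identified in Theorem~A as the moduli invariant of the curve.
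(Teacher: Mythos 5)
Your proof is correct and follows essentially the same route as the paper's: both argue by induction on the length $m$ of the Euclidean algorithm, with the base case $m=1$ ($p=1$) producing a $(-2,\ldots,-2,-1)$ chain and the subsequent phases handled recursively via the passage $(p,q)\mapsto(p_{2},q_{2})$, and both deduce the last statement from the fact that just before the final blow-up the strict transform has the form $\prod_{j}(y-\lambda_{j}x)$, so that it meets the principal projective line at the points $t=\lambda_{j}$. Your explicit junction-divisor bookkeeping (each junction dropping by $s_{j}$ during phase $j$ and by one more at the first blow-up of phase $j+1$) merely spells out the self-intersection accounting that the paper compresses into its appeal to the induction hypothesis.
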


\begin{proof}
The proof shall be performed by induction on $m$, the length of the Euclidean
algorithm. In order to better understand the arguments, the reader have to
keep in mind the proof of Lemma \ref{companion fibration}. From Lemma
\ref{main decomp.}, we may suppose, without loss of generality, that $P$ can
be written in the form $P(x,y)=%
{\displaystyle\prod\limits_{j=1}^{k}}
(y^{p}-\lambda_{j}x^{q})$. First remark that if $m=1$ then $p=1$. Thus we
prove the statement for $m=1$ by induction on $q$. For $q=1$ the result is
easily verified after one blowup. Now suppose the result is true for all
$q\leq q_{0}-1$. Then after one blowup $\pi(t,x)=(x,tx)$, $\pi(u,y)=(uy,y)$,
$P$ is transformed into $\pi^{\ast}P(t,x)=x%
{\displaystyle\prod\limits_{j=1}^{k}}
(t-\lambda_{j}x^{q-1})$. Thus the result follows for $m=1$ by induction on
$q$. Suppose the result is true for all polynomials whose pair of weights have
Euclid's algorithm length less than $m$, and let $(p,q)$ has length $m$. Since
$p_{j}=s_{j}q_{j}+r_{j}$, $j=1,\ldots,m$, is the Euclid's algorithm of
$(p,q)$, then $p_{j}=s_{j}q_{j}+r_{j}$, $j=2,\ldots,m$, is the Euclid's
algorithm of $(p_{2},q_{2})$. In particular the Euclid's algorithm of
$(p_{2},q_{2})$ has length $m-1$. Reasoning in a similar way as in the case
$m=1$, we have after $s_{1}$ blowups a linear chain of projective lines
$\cup_{j=1}^{s_{1}}D_{j}^{(1)}$ such that $D_{j}^{(1)}\cdot D_{j}^{(1)}=-2$
for all $j=1,\ldots,s_{1}-1$ and $D_{s1}^{(1)}\cdot D_{s_{1}}^{(1)}=-1$.
Further, the strict transform of $P=0$ is given by the zero set of the
polynomial $\widetilde{P}(t,x)=%
{\displaystyle\prod\limits_{j=1}^{k}}
(t^{p_{1}}-\lambda_{j}x^{r_{1}})=$ $\lambda_{1}\cdots\lambda_{k}%
{\displaystyle\prod\limits_{j=1}^{k}}
(x^{p_{2}}-\lambda_{j}t^{q_{2}})$ where the local equation for $D_{s_{1}%
}^{(1)}$ is $(x=0)$. The first statement thus follows by the induction
hypothesis. The last statement comes immediately from the above reasoning.
\ For the above induction arguments ensure that the strict transform of $P$
assume the form $\widetilde{P}=0$, with $\widetilde{P}(x,y)=%
{\displaystyle\prod\limits_{j=1}^{k}}
(y-\lambda_{j}x)$, just before the last blowup.
\end{proof}

The above Corollary gives an easy way to compute the relatively prime weights
of a quasi-homogeneous polynomials from the dual weighted tree of its minimal
resolution. Also it shows that the minimal resolution can be used both to
split a quasi-homogeneous polynomial into irreducible components and also to
determine its analytic type.

\part{Classification of foliations}

\section{Preliminaries}

A germ of a singular foliation $(\mathcal{F}:\omega=0)$ in $(\mathbb{C}^{2},0)$
of codimension $1$ is, roughly speaking, the set of integral curves of a given
germ of $1$-form $\omega\in\Omega^{1}(\mathbb{C}^{2},0)$, which may be assumed
to have just an isolated singularity at the origin. Let $\operatorname*{Diff}%
(\mathbb{C}^{k},0)$ be the group of germs of analytic diffeomorphisms of
$(\mathbb{C}^{k},0)$ fixing the origin. Two germs of foliations $(\mathcal{F}%
_{j}:\omega_{j}=0)$ in $(\mathbb{C}^{2},0)$, $j=1,2$, are analytically
equivalent if there is $\Phi\in\operatorname*{Diff}(\mathbb{C}^{2},0)$ sending
leaves of $\mathcal{F}_{1}$ into leaves of $\mathcal{F}_{2}$. One says that
$h_{1},h_{2}\in\operatorname*{Diff}(\mathbb{C},0)$ are analytically conjugate
if there is $\phi\in\operatorname*{Diff}(\mathbb{C},0)$ such that $Ad_{\phi
}(h_{1}):=\phi\circ h_{1}\circ\phi^{-1}=h_{2}$. We denote the \textit{Hopf
bundle\ of order }$k$ (see Definition \ref{Hopf bundle}) by $p_{(k)}%
:\mathcal{H}(-k)\rightarrow D$ where $D\simeq\mathbb{CP}(1)$, or just by its
total space $\mathbb{H}(-k)$. Let $\pi:(\widetilde{X},D)\longrightarrow
\mathbb{(C}^{2},0\mathbb{)}$ be the map resulting from the iteration of a
finite number of blowups with exceptional divisor $D=\pi^{-1}(0)$. Let
$D=\cup$ $D_{j}$ be its decomposition into irreducible components where
$D_{j}$ has self-intersection number equal to $-k_{j}$ for $j=1,\ldots,n$.
Then recall from the theory of algebraic curves that a suitable neighborhood
of $D$ in $\widetilde{X}$ results from pasting together suitable neighborhoods
of the zero sections of $\mathbb{H}(-k_{j})$. We denote by $\widetilde
{\mathcal{F}}$ the unique extension of $\pi^{\ast}(\mathcal{F})$ whose
singular set has codimension greater or equal to $2$ (cf. \cite{MaMo 80}). For
each Hopf bundle $p_{j}:\mathcal{H}_{j}\rightarrow D_{j}$ of a given
resolution, we denote by $\widetilde{\mathcal{F}}_{j}$ the germ of a foliation in $(\mathcal{H}_{j},D_{j})$ induced by the restriction of $\widetilde
{\mathcal{F}}$ and call it the $j^{\text{th}}$ \textit{Hopf component} of the
resolution. The singular points of the exceptional divisor, namely
$c_{ij}:=D_{i}\cap D_{j}$, are called \textit{corners} and the singularities
about such points are called \textit{corner singularities} (or just corners)
and denoted by $\widetilde{\mathcal{F}}_{ij}$. The \textquotedblleft strict
transform\textquotedblright\ of $\operatorname*{Sep}(\mathcal{F})$ at
$D_{j}\subset$ $\mathcal{H}_{j}$, i.e. the set of local separatrices of
$\widetilde{\mathcal{F}}_{j}$, namely $\operatorname*{Sep}(\widetilde
{\mathcal{F}}_{j})=\overline{\left(  \pi^{\ast}\operatorname*{Sep}%
(\mathcal{F})\right)  |_{\mathcal{H}_{j}}\backslash D_{j}}$, is called the
$j^{\text{th}}$ \textit{Hopf component}\textbf{ }of $\pi^{\ast}%
(\operatorname*{Sep}(\mathcal{F}))$. Two foliations having analytically
equivalent Hopf components are called \textit{analytically componentwise
equivalent}.

Let $p:\mathcal{H}\rightarrow D$ be a Hopf bundle and $\mathcal{F}$ a germ of a foliation defined in $(\mathcal{H},D)$. Then $\mathcal{F}$ is called
\textit{non-dicritical} if $D$ is an invariant set of $\mathcal{F}$, and
\textit{dicritical} otherwise. In the former case the holonomy of
$\mathcal{F}$ with respect to $D$ evaluated at a transversal section $\Sigma$
is called the \textit{projective holonomy} of $\mathcal{F}$ and denoted by
$\operatorname*{Hol}_{\Sigma}(\mathcal{F},D)$. One says that $\mathcal{F}$ is
\textit{resolved} if it has just \textit{reduced} singularities (cf.
\cite{MaMo 80}). Let $\widetilde{\mathcal{F}}^{1}$ and $\widetilde
{\mathcal{F}}^{2}$ be two germs of non-dicritical singular foliations at
$D\subset\mathcal{H}$ without saddle-nodes and having the same singular set,
say $\{t_{j}\}_{j=1}^{n}$. Let $t_{0}\in D$ be a regular point of\ $\widetilde
{\mathcal{F}}^{1}$ and denote by $h_{\gamma}^{i}$ the holonomy of a path
$\gamma\in\pi_{1}(D\backslash\{t_{j}\}_{j=1}^{n},t_{0})$ with respect to $D$
evaluated at a transversal section $\Sigma_{0}:=p^{-1}(t_{0})$. Then one says
that the projective holonomies of these foliations are \textit{analytically
conjugate }if there is $\phi\in\operatorname*{Diff}(\mathbb{C},0)$ such that
$Ad_{\phi}(h_{\gamma}^{1})=h_{\gamma}^{2}$ for every $\gamma\in\pi
_{1}(D\backslash\{t_{j}\}_{j=1}^{n},t_{0})$.

A \textit{generalized curve} is a germ of a singular foliation in $(\mathbb{C}%
^{2},0)$ that has no saddle-node or dicritical components along its minimal
resolution (cf. \cite{CaLNSa 84}). A germ of  a holomorphic function
$f\in\mathbb{C}\{x,y\}$ is said to be \textit{quasi-homogeneous} if there is a
local system of coordinates in which $f$ can be represented by a
quasi-homogeneous polynomial, i.e. $f(x,y)=\sum_{ai+bj=d}a_{ij}x^{i}y^{j}$
where $a,b,d\in\mathbb{N}$. A\ quasi-homogeneous polynomial $f\in
\mathbb{C}[x,y]$ is called \textit{commode}\textbf{ }if its Newton polygon
intersects both coordinate axis. The separatrix set of a germ of a foliation $\mathcal{F}$ in $(\mathbb{C}^{2},0)$ is said to be quasi-homogeneous if
$\operatorname*{Sep}(\mathcal{F})=f^{-1}(0)$ where $f$ is a quasi-homogeneous
function. The set of generalized curves in $(\mathbb{C}^{2},0)$ with
quasi-homogeneous separatrix set is denoted by $\mathcal{QHS}$; in particular, if
$\operatorname*{Sep}(\mathcal{F})$ is commode, then $\mathcal{F}$ is called a
\textit{commode} $\mathcal{QHS}$\ foliation.

A \textit{tree of projective lines} is an embedding of a connected and simply
connected chain of projective lines intersecting transversely in a complex
surface (two dimensional complex analytic manifold) with two projective lines
in each intersection. In fact, it consists of the pasting of Hopf bundles
whose zero sections are the projective lines themselves. A\textbf{\ }%
\textit{tree of points} is any tree of projective lines in which are
discriminated a finite number of points. The above nomenclature has a natural
motivation. In fact, as is well know, we can assign to each projective line a
point and to each intersection an edge in other to form the \textit{weighted
dual graph}. Two trees of points are called \textit{isomorphic} if their
weighted dual graph are isomorphic (as graphs).
\begin{center}
\includegraphics[
height=0.9695in,
width=3.0381in
]%
{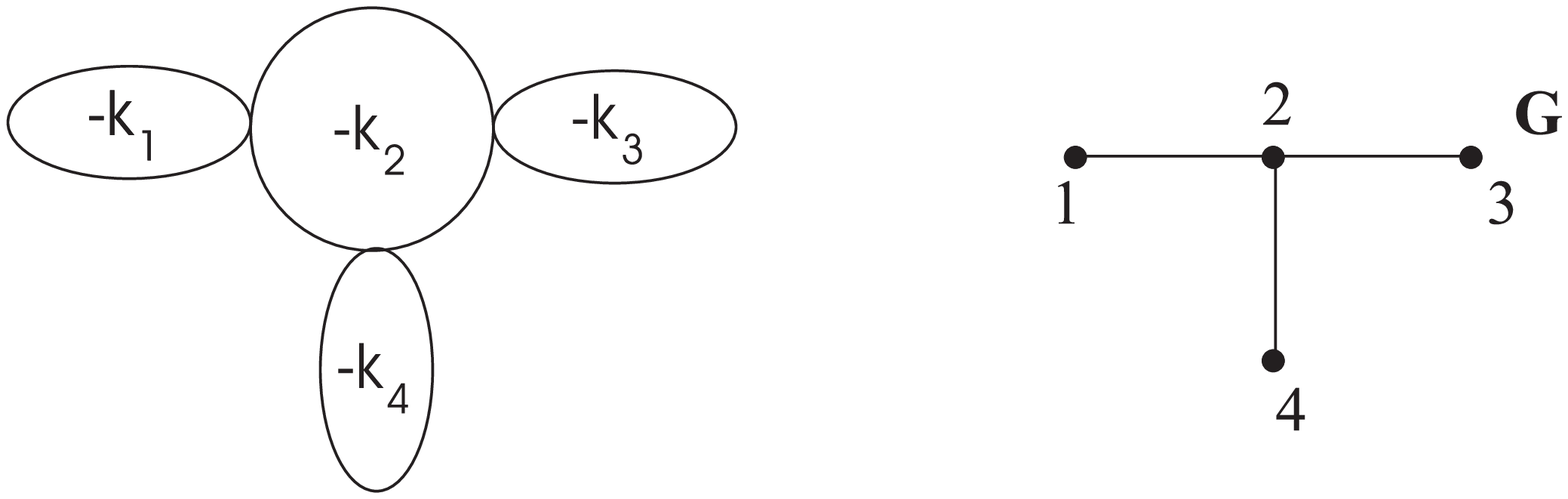}%
\\
Figure 1
\end{center}

Recall, from \cite{Se 68}, that any germ of a holomorphic foliation
$\mathcal{F}$ in $(\mathbb{C}^{2},0)$ has a minimal resolution. We denote it
by $\widetilde{\mathcal{F}}$ and its ambient surface by $M_{\widetilde
{\mathcal{F}}}$. If the exceptional divisor of $\widetilde{\mathcal{F}}$ has
just one projective line containing three or more singular points of
$\widetilde{\mathcal{F}}$, then it is called the \textit{principal projective
line} of $\widetilde{\mathcal{F}}$ and denoted by $D_{\operatorname*{pr}%
(\widetilde{\mathcal{F}})}$. If $\widetilde{\mathcal{F}}$ has a principal
projective line, then the projective holonomy of its principal projective line
is called the \textit{projective holonomy of the foliation $\mathcal{F}$}. Later on, we will
see that any $\mathcal{QHS}$ foliation has a principal projective line. Then
one says that $\mathcal{F}\in\mathcal{QHS}$ is \textit{generic}\textbf{ }if
the singularities of $\widetilde{\mathcal{F}}$ about the corners of $D$ in
$D_{\operatorname*{pr}(\widetilde{\mathcal{F}})}$ are in the
Poincar\'{e}-Dulac or Siegel domain (cf. \cite{Ar 80}).

\vglue.1in
\noindent{\bf Theorem B} {\sl Let $\mathcal{F}$ and $\mathcal{F}^{\prime}$ be two
$\mathcal{QHS}$ germs of foliations with the same separatrix set. Suppose that
$\mathcal{F}$ and $\mathcal{F}^{\prime}$ are both commode or generic. Then
$\mathcal{F}$ and $\mathcal{F}^{\prime}$ are analytically equivalent if and
only if their projective holonomies are analytically conjugate.}

\vglue.1in

\section{Hopf bundles and projective holonomy}

We consider non-dicritical resolved\ singular foliations without saddle-nodes
defined in a neighborhood of the zero section of a Hopf bundle. Under some
natural geometric conditions, we describe the invariants that determine their
analytic type.

First recall the definition of a Hopf bundle.

\begin{definition}
\label{Hopf bundle}Let $k\in\mathbb{Z}_{+}$ and consider two copies of
$\mathbb{C}^{2}$ with coordinates given respectively by $(t,x)$ and $(u,y)$.
Then the line bundle over $\mathbb{CP}(1)$ given by the transition maps%
\[
\left\{
\begin{array}
[c]{c}%
y=t^{k}x\\
u=1/t
\end{array}
\right.
\]
for all $t\neq0$ is called the Hopf bundle of order $k$ and denoted by
$p_{(k)}:\mathcal{H}(-k)\rightarrow\mathbb{CP}(1)$ or just by its total space
$\mathbb{H}(-k)$.
\end{definition}

Clearly, two analytically equivalent singularities have isomorphic weighted
dual trees of singular points along their minimal resolution. Thus, if we
consider analytically equivalent Hopf components, it is clear that isomorphic
points have the same linear part and that their local holonomy generators are
conjugated by a global map. To clarify the ideas, we need the following

\begin{definition}
\label{transversality}Let $M$ be a complex surface and $S\subset M$ a smooth
curve invariant by the germ of a holomorphic foliation $\mathcal{F}$ in $(M,S)$
such that $\operatorname*{Sing}(\mathcal{F})\subset S$ has just
non-degenerated reduced singularities (i.e. without saddle nodes). Then we say
that a germ of a holomorphic map $f:(M,S)\rightarrow S$ is a fibration
transversal to $\mathcal{F}$ if it satisfies:

\begin{enumerate}
\item \label{item 1}$f$ is a retraction, i.e. $f$ is a submersion and
$f|_{S}=\operatorname*{id}|_{S}$;

\item \label{item 2}the fiber $f^{-1}(t_{j})$ is a separatrix of $\mathcal{F}$
for each $t_{j}\in\operatorname*{Sing}(\mathcal{F})$;

\item \label{item 3}$f^{-1}(t)$ is transversal to $\mathcal{F}$ for every
(regular) point $t\in S\backslash\operatorname*{Sing}(\mathcal{F})$.
\end{enumerate}
\end{definition}

Let $\mathcal{F}$ be a germ of a singular holomorphic foliation without
saddle-nodes defined in a neighborhood of the zero section of the Hopf bundle
$p:\mathcal{H}\rightarrow D$, $f:(\mathcal{H},D)\rightarrow D$ a fibration
transversal to $\mathcal{F}$, and $t_{0}\in D\backslash\operatorname*{Sing}%
(\mathcal{F})$ a regular point of $\mathcal{F}$. Hence the path lifting
construction ensures that the projective holonomy $\operatorname*{Hol}%
_{f^{-1}(t)}(\mathcal{F},D)$ is completely determined by $\operatorname*{Hol}%
_{f^{-1}(t_{o})}(\mathcal{F},D)$ for any $t,t_{0}\in D\backslash
\operatorname*{Sing}(\mathcal{F})$. Such a holonomy is called the
\textit{projective holonomy} of $\mathcal{F}$ with respect to $f$. If there is
no doubt about the fibration, we only talk about the projective holonomy of
the foliation and denote it by $\operatorname*{Hol}(\mathcal{F},D)$.

\begin{definition}
Let $\mathcal{F}$ and $\mathcal{F}_{o}$ be germs of singular resolved and
non-dicritical foliations defined in a neighborhood of the zero section of the
Hopf bundle $p:\mathcal{H}\rightarrow D$ with the same singular set $S$. Then
we set
\[
\operatorname*{Diff}\nolimits_{\mathcal{F},\mathcal{F}_{o}}(\mathcal{H}%
,D):=\{\Phi\in\operatorname*{Diff}(\mathcal{H},D):\,\Phi_{\ast}(\mathcal{F}%
)=\mathcal{F}_{o}\text{ and }\,\Phi|_{S}=\operatorname*{id}\}
\]
and call
\[
\operatorname*{Aut}(\mathcal{F}_{o}):=\{\Phi\in\operatorname*{Diff}%
\nolimits_{\mathcal{F}_{o},\mathcal{F}_{o}}(\mathcal{H},D):\Phi|_{S}%
=\operatorname*{id}\}
\]
the group of automorphisms of $\mathcal{F}_{o}$. Further, if $f:(\mathcal{H}%
,D)\rightarrow D$ is a fibration transversal to $\mathcal{F}_{o}$, then the
set of elements of $\operatorname*{Aut}(\mathcal{F}_{o})$ preserving $f$ is
denoted by $\operatorname*{Aut}(\mathcal{F}_{o},f)$.
\end{definition}

\begin{proposition}
\label{holonomy lifting}Let $\mathcal{F}^{i}$, $i=1,2$, be two germs of
resolved and non-dicritical singular foliations without saddle-nodes defined
in a neighborhood of the zero section of the Hopf bundle $p:\mathcal{H}%
\rightarrow D$. Suppose that $\operatorname*{Sep}(\mathcal{F}^{1}%
)=\operatorname*{Sep}(\mathcal{F}^{2})$ and that there is a fibration
$f_{i}:(\mathcal{H},D)\longrightarrow D$ transversal to $\mathcal{F}^{i}$.
Then $\mathcal{F}^{1}$ and $\mathcal{F}^{2}$ are analytically equivalent if
and only if their projective holonomies are analytically conjugate.
\end{proposition}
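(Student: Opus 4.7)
The plan is to prove the two directions separately, treating necessity as a routine consequence of path-lifting and devoting the bulk of the argument to sufficiency, which rests on a Mattei--Moussu style local-to-global construction.

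For necessity, suppose $\Phi\in\operatorname{Diff}(\mathcal{H},D)$ sends $\mathcal{F}^{1}$ to $\mathcal{F}^{2}$. Then $\Phi(D)=D$ and $\Phi$ permutes the common singular set. Since $\operatorname{Aut}(D)=\operatorname{PSL}(2,\mathbb{C})$ lifts to bundle automorphisms of the Hopf bundle $p:\mathcal{H}\to D$, I would compose $\Phi$ with such a lift to arrange $\Phi|_{D}=\operatorname{id}$. Projecting $\Phi(f_{1}^{-1}(t_{0}))$ onto $f_{2}^{-1}(t_{0})$ along the leaves of $\mathcal{F}^{2}$ produces a germ $\phi\in\operatorname{Diff}(\mathbb{C},0)$, and path-lifting of any loop $\gamma\in\pi_{1}(D\setminus\operatorname{Sing},t_{0})$ under $\Phi$ yields the relation $Ad_{\phi}(h^{1}_{\gamma})=h^{2}_{\gamma}$.

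For sufficiency, assume $\phi$ conjugates the projective holonomies. The local step is to apply the Mattei--Moussu analytic classification at each singular point $t_{j}\in D$. There both germs are reduced, free of saddle-nodes, and share two local separatrices: the divisor $D$ itself and the fiber $f_{i}^{-1}(t_{j})$, which is a separatrix by item~\ref{item 2} of Definition~\ref{transversality}; these two second separatrices coincide because $\operatorname{Sep}(\mathcal{F}^{1})=\operatorname{Sep}(\mathcal{F}^{2})$ uniquely determines the non-$D$ separatrix at $t_{j}$. The local holonomy of $\mathcal{F}^{i}$ at $t_{j}$, read on a transversal along $D$, equals the projective holonomy element $h^{i}_{\gamma_{j}}$ of a small loop around $t_{j}$, and these are conjugated by $\phi$ by hypothesis. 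The Mattei--Moussu theorem for reduced non-saddle-node singularities then produces a local biholomorphism $\Phi_{j}$ conjugating the two germs, and I would normalize it so that $\Phi_{j}|_{D}=\operatorname{id}$ and $\Phi_{j}$ restricts to $\phi$ on a small disc in the transversal.

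The global step uses the flow-box structure provided by the transversal fibrations. Covering $D\setminus\operatorname{Sing}$ by simply connected sets, I would define $\Phi$ on a punctured neighbourhood of $D$ by transporting a point along the $\mathcal{F}^{1}$-leaf through it back to $f_{1}^{-1}(t_{0})$, applying $\phi$, and transporting forward along the $\mathcal{F}^{2}$-leaf to $f_{2}^{-1}$ over the appropriate base point; the conjugation $Ad_{\phi}(h^{1}_{\gamma})=h^{2}_{\gamma}$ guarantees that $\Phi$ is single-valued and holomorphic, and it sends $\mathcal{F}^{1}$-leaves to $\mathcal{F}^{2}$-leaves. The main obstacle, and the hardest step, is the gluing with the local models $\Phi_{j}$ across each singular point. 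On the punctured neighbourhood of $t_{j}$ the composition $\Phi_{j}\circ\Phi^{-1}$ lies in $\operatorname{Aut}(\mathcal{F}^{2})$, fixes $D$ pointwise, and coincides with the identity on a transversal after holonomy transport, so the rigidity of reduced singularities without saddle-nodes forces $\Phi_{j}\equiv\Phi$ there. Boundedness then allows $\Phi$ to extend analytically across each $t_{j}$ by Riemann's removable singularity theorem, producing the desired global equivalence. The essential use of the transversal fibration hypothesis is precisely to provide the canonical flow-box coordinates in which the locally defined $\Phi_{j}$ and the globally defined $\Phi$ can be compared directly.
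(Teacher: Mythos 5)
Your proof is correct, and its core -- the map $x\mapsto h_t^{2}\circ\phi\circ(h_t^{1})^{-1}(x)$ built by transporting along leaves via the two transversal fibrations, with single-valuedness guaranteed by $Ad_\phi(h^1_\gamma)=h^2_\gamma$ -- is exactly the paper's construction. Where you diverge is in how you cross the singular fibers. The paper does not introduce local models at all: it observes (citing Mattei--Moussu and Martinet--Ramis) that for reduced non-saddle-node singularities the saturation of $\Sigma_0=f_1^{-1}(t_0)$ together with the separatrix fibers $\bigcup_j f_1^{-1}(t_j)$ already fills a neighborhood of $D$, checks holomorphy of $\Phi$ on the saturated part via separate holomorphy plus Hartogs, and then extends across the thin analytic set $\operatorname{Sep}(\mathcal{F}^1)$ by Riemann's extension theorem. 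You instead invoke the local Mattei--Moussu analytic classification at each $t_j$ to manufacture local conjugations $\Phi_j$ and then glue them to the global $\Phi$. That route also works, and it has the advantage of making the extension across $t_j$ completely explicit ($\Phi_j$ itself is the extension, so your final appeal to Riemann's theorem is actually redundant once $\Phi_j\equiv\Phi$ on the overlap); but it costs you the full strength of the local classification theorem plus a gluing step whose justification needs care: $\Phi_j\circ\Phi^{-1}$ agreeing with the identity on a transversal forces it to be the identity only because the saturation of that transversal is open in a punctured neighborhood of $t_j$, which is again the Mattei--Moussu saturation fact. So either way that fact is the real engine; the paper uses it once and directly, while you use it implicitly inside both the local classification and the gluing. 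Your necessity argument (normalizing $\Phi|_D=\operatorname{id}$ by a bundle automorphism and projecting $\Phi(f_1^{-1}(t_0))$ onto $f_2^{-1}(t_0)$ along leaves) is fine and slightly more detailed than the paper's one-line dismissal.
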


\begin{proof}
As already remarked, the necessary part is straightforward. Let us treat the
sufficient part. Since the separatrices of $\mathcal{F}^{1}$ and
$\mathcal{F}^{2}$ coincide, then their singular sets also coincide. Let
$\operatorname*{Sing}(\mathcal{F}^{i})=\{t_{j}\}_{j=1}^{n}$ and $t_{0}\in D$
be a regular point. Suppose there is $\phi\in\operatorname*{Diff}%
(\mathbb{C},0)$ such that $\phi\circ(h_{j}^{1})\circ\phi^{-1}=h_{j}^{2}$ for
all $j=1,\ldots,n$. Then define the map $\Phi:\mathcal{F}\backslash
\bigcup_{j=1}^{n}f_{1}^{-1}(t_{j})\longrightarrow\mathcal{F}^{\prime
}\backslash\bigcup_{j=1}^{n}f_{2}^{-1}(t_{j})$ by%
\[
\Phi(t,x):=\Phi_{t}(x):=h_{t}^{2}\circ\phi\circ(h_{t}^{1})^{-1}(x),
\]
where $x\in f_{1}^{-1}(t)$ and $h_{t}^{i}:f_{i}^{-1}(t_{0})\longrightarrow
f_{i}^{-1}(t)$ are the holonomy maps obtained by path lifting a curve
connecting $t_{0}$ to $t$ along the leaves of $\mathcal{F}^{i}$. Note that
this map does not depend on the chosen base curves, since $\phi$ conjugates
the elements of the respective projective holonomies of $\mathcal{F}^{1}$ and
$\mathcal{F}^{2}$. Since $\Phi$ is holomorphic in each variable separately,
then (complex) ODE theory and Hartogs' theorem ensure that $\Phi$ is
holomorphic. Finally, since $\mathcal{F}^{1}$ has just reduced singularities,
then \cite{MaMo 80},\cite{MaRa 83} ensure that the union of the saturated of
$\Sigma_{0}:=f_{1}^{-1}(t_{0})$ along the leaves of $\mathcal{F}^{1}$ and the
local separatrices $\operatorname*{Sep}(\mathcal{F}^{1})=\bigcup_{j=1}%
^{n}f_{1}^{-1}(t_{j})$ gives rise to a neighborhood of $D$. Thus we can use
Riemann's extension theorem in order to extend $\Phi$ to $\operatorname*{Sep}%
(\mathcal{F}^{1})$ in a neighborhood of $D$.
\end{proof}

\section{Analytic invariants\label{anal. class. sec.}}

We consider germs of foliations in $(\mathbb{C}^{2},0)$ and use the weighted
dual trees of their minimal resolutions, the first jet of each singularity of
these resolutions, and the projective holonomies of their Hopf components in
order to determine analytic componentwise equivalence. Next, we identify some
analytical cocycles that appear as obstructions to extend analytically
componentwise isomorphism. Finally, we relate these obstructions with the
analytic classification of the foliations.

\subsection{Componentwise equivalence and realization}

We find conditions to determine whether two $\mathcal{QHS}$ foliations with
the same quasi-homogeneous separatrix set are componentwise equivalent.

First, let us introduce some notation. Let $\mathcal{QHS}_{f}$ denote the set
of $\mathcal{QHS}$ foliations with the same separatrix set $f=0$. Let
$\mathcal{F},\mathcal{F}^{\prime}\in\mathcal{QHS}_{f}$ and $\widetilde
{\mathcal{F}},\widetilde{\mathcal{F}}^{\prime}$ be respectively their minimal
resolutions. Let $\operatorname*{Sing}(\widetilde{\mathcal{F}})=\{t_{i,j_{i}%
}\}_{i,j_{i}=1}^{k,n_{i}}$ where $k$ is the number of Hopf components of
$\widetilde{\mathcal{F}}$ and $n_{i}:=\#\operatorname*{Sing}(\widetilde
{\mathcal{F}}_{i})$. Let $\omega_{i,j_{i}}=0$ and $\omega_{i,j_{i}}^{\prime
}=0$ determine the germs of $\widetilde{\mathcal{F}}$ and $\widetilde
{\mathcal{F}}^{\prime}$ at $t_{i,j_{i}}$. Then one says that $\widetilde
{\mathcal{F}}^{\prime}$ is \textit{analytically componentwise equivalent} to
$\widetilde{\mathcal{F}}$ \textit{up to first order} if $J^{1}(\omega
_{i,j_{i}})=J^{1}(\omega_{i,j_{i}}^{\prime})$ (i.e. if they have the same
linear part) for all $i=1,\ldots,k$ and $j_{i}=1,\ldots,n_{i}$. The set of
$\mathcal{QHS}_{f}$ foliations analytically componentwise equivalent up to
first order to $(\mathcal{F}:\omega=0)$ is denoted by $\mathcal{QHS}%
_{\omega,f}^{c,1}$. Finally, denote the set of $\mathcal{QHS}$ (respect.
$\mathcal{QHS}_{f}$) foliations analytically componentwise equivalent to
$(\mathcal{F}:\omega=0)$ by $\mathcal{QHS}_{\omega}^{c}$ (respect.
$\mathcal{QHS}_{\omega,f}^{c}$).

\begin{remark}
Any element in $\mathcal{QHS}_{f}$ has its weighted dual graph automatically
determined by the separatrix $f=0$.
\end{remark}

We determine now the moduli space $\left.  \mathcal{QHS}_{\omega}%
^{c,1}\right/  \mathcal{QHS}_{\omega}^{c}$. The following result is a
straightforward consequence of Proposition \ref{holonomy lifting}.

\begin{proposition}
\label{first reduction}Let $\mathcal{F}$ and $\mathcal{F}^{\prime}$ belong
with the same conjugacy class in $\mathcal{QHS}_{\omega}^{c,1}$. Then they
belong with the same conjugacy class in $\mathcal{QHS}_{\omega}^{c}$ if and
only if their projective holonomies are analytically conjugate.
\end{proposition}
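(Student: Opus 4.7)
The plan is to reduce the componentwise statement to a sequence of applications of Proposition~\ref{holonomy lifting}, one per Hopf component of the common minimal resolution.

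For the necessity direction, suppose $\mathcal{F}$ and $\mathcal{F}'$ are in the same class of $\mathcal{QHS}_{\omega}^{c}$. Then for every $i$, there is an analytic equivalence $\Phi_i\in\operatorname{Diff}(\mathcal{H}_i,D_i)$ taking leaves of $\widetilde{\mathcal{F}}_i$ to leaves of $\widetilde{\mathcal{F}}'_i$. Take $i_0$ to be the index of the principal projective line $D_{\operatorname{pr}(\widetilde{\mathcal{F}})}$, pick a regular base point $t_0\in D_{i_0}$ and a transversal section $\Sigma\subset p_{i_0}^{-1}(t_0)$. The restriction $\phi:=\Phi_{i_0}|_{\Sigma}$ is then a germ of diffeomorphism of $(\mathbb{C},0)$, and the standard path-lifting argument shows that $\phi$ conjugates each holonomy generator $h_\gamma^{\mathcal{F}}$ to $h_\gamma^{\mathcal{F}'}$ for every loop $\gamma$ in $D_{i_0}\setminus\operatorname{Sing}(\widetilde{\mathcal{F}}_{i_0})$. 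By definition this is exactly the analytic conjugacy of the projective holonomies of $\mathcal{F}$ and $\mathcal{F}'$.

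For sufficiency, I would apply Proposition~\ref{holonomy lifting} separately to each Hopf component $(\widetilde{\mathcal{F}}_i,\widetilde{\mathcal{F}}'_i)$ on $\mathcal{H}_i\to D_i$. The foliations are resolved, non-dicritical, and saddle-node free because both lie in $\mathcal{QHS}$ (so both are generalized curves). They share the same separatrix set on each component, since being in $\mathcal{QHS}_{\omega}^{c,1}$ fixes $\operatorname{Sep}(\mathcal{F})=\operatorname{Sep}(\mathcal{F}')$, whose strict transform intersects each $D_i$ in the same set of points (so the local separatrices of $\widetilde{\mathcal{F}}_i$ and $\widetilde{\mathcal{F}}'_i$ coincide, namely $D_i$ itself together with the relevant components of the strict transform). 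Moreover, the quasi-homogeneous structure of the separatrix guarantees a transversal fibration on each Hopf component: indeed, the Hopf bundles produced by the resolution of a quasi-homogeneous polynomial inherit the fibration by curves $y^p/x^q\equiv\text{const}$ (or its successive blow-ups), whose generic fiber is a separatrix and whose special fibers are the local separatrices of the corner singularities, matching the three requirements of Definition~\ref{transversality}.

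It remains to verify, componentwise, that the projective holonomies of $\widetilde{\mathcal{F}}_i$ and $\widetilde{\mathcal{F}}'_i$ are analytically conjugate; for $i=i_0$ this is the standing hypothesis. For $i\neq i_0$ the component $D_i$ is an end or an intermediate link of the linear chain and carries at most two singular points, which are corner singularities whose first jets agree by $\mathcal{QHS}_{\omega}^{c,1}$. Because the ratios of eigenvalues at these corners are rational (being produced by the standard resolution of a quasi-homogeneous polynomial), each such corner is analytically linearizable or admits a canonical normal form determined by its $1$-jet, so its local holonomy is already fixed by the first-order datum; composing these local holonomies yields the projective holonomy of $\widetilde{\mathcal{F}}_i$, which therefore coincides with that of $\widetilde{\mathcal{F}}'_i$. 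Proposition~\ref{holonomy lifting} then produces, on each component, an analytic equivalence, proving that $\mathcal{F}$ and $\mathcal{F}'$ lie in the same class of $\mathcal{QHS}_{\omega}^{c}$. The main obstacle I anticipate is exactly this last step: namely, rigorously promoting agreement of $1$-jets at the corners to agreement of local holonomies, which requires invoking the standard linearization/normal form theorems available in the setting produced by a quasi-homogeneous resolution; once that is in place the rest is a purely mechanical componentwise application of Proposition~\ref{holonomy lifting}.
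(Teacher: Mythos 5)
Your necessity argument is fine, and your overall strategy for sufficiency (apply Proposition~\ref{holonomy lifting} component by component, using the companion fibration $y^p/x^q\equiv\mathrm{const}$ to supply the transversal fibrations) is the right one. But the step you yourself flag as the main obstacle is a genuine gap, and the fix you propose does not work. The corner singularities of the resolution of a quasi-homogeneous generalized curve have \emph{negative} rational eigenvalue ratios, i.e.\ they are resonant saddles. A resonant saddle is in general \emph{not} analytically linearizable and is \emph{not} determined by its $1$-jet: by the Martinet--Ramis classification (cited in the paper precisely for this reason) such germs carry functional moduli, so two corners with equal linear parts can have non-conjugate local holonomies. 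Note that even Lemma~\ref{linear models} of the paper only obtains linearizability of the non-principal components under the additional \emph{generic} hypothesis, and even there it is deduced from the holonomy of the principal line, not from the $1$-jet; Proposition~\ref{first reduction} carries no such hypothesis, so your argument cannot rely on linearization at all.

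The missing idea is to \emph{propagate} the holonomy conjugation outward along the linear chain rather than trying to read it off the $1$-jets. The hypothesis conjugates the projective holonomy of the principal line $D_\ell$, so Proposition~\ref{holonomy lifting} yields an equivalence of the principal Hopf components. This equivalence restricts to a conjugation of the germs of $\widetilde{\mathcal{F}}$ and $\widetilde{\mathcal{F}}'$ at each corner $c_{\ell,\ell\pm1}$ preserving both local separatrices, hence it conjugates the local holonomy of that corner computed with respect to the \emph{adjacent} divisor $D_{\ell\pm1}$. Since $D_{\ell\pm1}$ carries at most two singular points (Lemmas~\ref{companion fibration} and~\ref{companion 2}), $\pi_1$ of its punctured divisor is cyclic and generated by the loop around that corner, so the full projective holonomies of $\widetilde{\mathcal{F}}_{\ell\pm1}$ and $\widetilde{\mathcal{F}}'_{\ell\pm1}$ are conjugate; Proposition~\ref{holonomy lifting} applies again, and one iterates to the ends of the chain. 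This induction is exactly the mechanism the paper reuses later (Lemmas~\ref{branch integrals} and~\ref{adapted}), and it is what makes the proposition a ``straightforward consequence'' of Proposition~\ref{holonomy lifting}; with the propagation step in place, the rest of your write-up goes through.
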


Given two germs of foliations in $\mathcal{QHS}_{\omega}^{c}$, we want to
verify under what conditions they are in fact globally holomorphically
conjugate. For this sake, we need the following realization data.

\begin{definition}
\label{resolution-like}A complex surface is called resolution-like if it is
obtained by a holomorphic pasting of Hopf bundles with negative Chern classes,
in such a way that the union of their zero sections become a tree of
projective lines isomorphic to the exceptional divisor of a composition of a
finite numbers of blowups applied to $(\mathbb{C}^{2},0)$.
\end{definition}

Clearly, this definition is given in such a way that every resolution surface
of some singularity is automatically resolution-like. In fact, any
resolution-like surface is biholomorphic to the resolution surface of some singularity.

\begin{proposition}
[\cite{Ca 01}]\label{surface unicity}Let $M$ be a resolution-like surface with
tree of projective lines $D$. Then $(M,D)$ can be realized as a neighborhood
of the exceptional divisor of a composition of a finite number of blowups
applied to $(\mathbb{C}^{2},0)$.
\end{proposition}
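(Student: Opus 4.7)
My plan is to proceed by induction on the number $n$ of projective lines in the tree $D$, using Castelnuovo's contraction theorem as the main reduction step. For the base case $n=1$, the tree consists of a single projective line whose weighted dual graph must agree with that of the exceptional divisor of some blowup sequence starting from $(\mathbb{C}^{2},0)$; this forces its self-intersection to be $-1$, since any multi-blowup tree has more than one component. A neighborhood of the zero section of $\mathbb{H}(-1)$ is biholomorphic in a standard way to a neighborhood of the exceptional divisor of the blowup $\mathrm{Bl}_{0}(\mathbb{C}^{2})\to(\mathbb{C}^{2},0)$, providing the desired realization.

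For the inductive step with $n\geq 2$, I would locate a component $D_{j}\subset D$ of self-intersection $-1$. Such a component always exists: the tree $D$ is assumed isomorphic (as weighted dual graph) to the exceptional divisor of some composition of blowups of $(\mathbb{C}^{2},0)$, and in any such sequence the curve introduced by the final blowup retains self-intersection $-1$, as no subsequent blowup can decrease it. I would then apply Castelnuovo's contraction theorem to contract $D_{j}$ in $M$, obtaining a smooth complex surface $M'$ whose tree $D'$ has one fewer component, with the self-intersection of each component previously meeting $D_{j}$ raised by $+1$. After checking that $(M',D')$ is again resolution-like, the inductive hypothesis yields a realization $(M',D')\cong(\widetilde{X}',E')$ where $\pi':\widetilde{X}'\to(\mathbb{C}^{2},0)$ is a composition of blowups with exceptional divisor $E'$. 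Blowing up $\widetilde{X}'$ at the smooth point that is the image of the contracted curve gives a new surface $(\widetilde{X},E)\to(\mathbb{C}^{2},0)$, and the uniqueness of the blowup of a smooth surface at a smooth point together with the inverse of the Castelnuovo contraction on $M$ forces $(M,D)\cong(\widetilde{X},E)$.

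The main obstacle is verifying that the contracted pair $(M',D')$ is again resolution-like, i.e., that the pasting of Hopf bundles defining $M$ descends under Castelnuovo's contraction to a pasting of Hopf bundles on $M'$. This is an essentially local analytic computation in a neighborhood of $D_{j}$: the one or two Hopf bundle charts attached to $D_{j}$ must combine coherently into a single Hopf chart (respectively a trivial chart at a smooth point when $D_{j}$ is an end), and the Chern classes of the adjacent Hopf bundles must match the shifted self-intersections dictated by $D'$. This is the standard comparison between Hirzebruch-type pastings before and after blowdown; once it is settled, the induction concludes the proof. The appeal to Castelnuovo requires only that $D_{j}$ be a smooth rational curve of self-intersection $-1$ in a smooth ambient surface, both of which are built into the resolution-like hypothesis.
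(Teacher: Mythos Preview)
Your proposal is correct and follows essentially the same inductive scheme as the paper: locate a $(-1)$-curve, contract it, and apply the induction hypothesis to the resulting surface with $n-1$ components. The difference is in the tool invoked for the contraction and for re-identifying the Hopf structure. You appeal to Castelnuovo's contraction theorem and then flag as an obstacle the verification that the contracted pair $(M',D')$ is again resolution-like. The paper instead cites Grauert's theorem (a neighborhood of a rational curve of negative self-intersection is biholomorphic to a neighborhood of the zero section of its normal bundle) together with Grothendieck's classification of line bundles on $\mathbb{P}^1$; this pair of results simultaneously justifies the blowdown of the $(-1)$-curve and, applied again to the remaining components after contraction, gives directly that each of them sits in a Hopf bundle neighborhood of the correct Chern class. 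In other words, Grauert--Grothendieck dissolves exactly the obstacle you identify, without any explicit chart computation. Your route via Castelnuovo is perfectly valid (and in the analytic category Castelnuovo is typically proved via Grauert anyway), but invoking Grauert--Grothendieck is what makes the ``resolution-like'' check immediate.
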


In order to prove this proposition, we need the following results about
complex line bundles.

\begin{theorem}
[Grauert \cite{Gra 62}]Let $S$ be a complex surface and $C\subset S$ be a
rational curve with negative self-intersection number. Then there are
neighborhoods $U$ and $V$ of $C$, respectively in $S$ and $N(C;S)$ (the normal
bundle of $C$ in $S$), and a biholomorphism $\Psi:U\rightarrow V$ sending $C$
in the zero section of $N(C;S)$.
\end{theorem}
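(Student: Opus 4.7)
The plan is to establish the biholomorphism by first linearizing the neighborhood formally order by order along $C$, and then proving that the resulting formal coordinate change converges on an honest open neighborhood of $C$.

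For the formal linearization, I would cover $C$ by two coordinate charts $(U_i, (z_i, w_i))$ in $S$, $i=1,2$, chosen so that $w_i=0$ is a local defining function of $C$ and the $z_i$ restrict on $C$ to the standard transition $z_2 = 1/z_1$ of $\mathbb{P}^1$. The assumption $C\cdot C = -k$ with $k>0$ forces the transition of the transverse coordinate to have the form $w_2 = z_1^{k} w_1\bigl(1 + \varepsilon(z_1,w_1)\bigr)$ with $\varepsilon$ vanishing on $C$, while $z_2 = 1/z_1 + O(w_1)$. The goal is then to find germs of biholomorphisms $\phi_i(z_i,w_i) = (z_i + O(w_i),\, w_i + O(w_i^2))$ in each chart, so that in the new coordinates the transition reduces to the linear Hopf transition $z_2 = 1/z_1$, $w_2 = z_1^k w_1$ of $N(C;S) = \mathcal{H}(-k)$. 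Expanding in powers of $w_i$, at each order $n$ the equation to solve is a Cousin-type problem on $C = \mathbb{P}^1$ whose obstruction lies in $H^1(\mathbb{P}^1, \mathcal{O}(d_n))$ for some integer $d_n$ which grows linearly in $n$ because $\deg(I/I^2) = k > 0$. Since $H^1(\mathbb{P}^1, \mathcal{O}(d)) = 0$ whenever $d \geq -1$, all obstructions vanish once $n$ is sufficiently large; the finitely many low-order obstructions can be handled individually using that the conormal bundle has strictly positive degree. This produces a compatible sequence of isomorphisms between the $n$-th infinitesimal neighborhoods of $C$ in $S$ and in $N(C;S)$, hence a formal power-series biholomorphism along $C$.

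For the convergence stage, I would carry the induction above with uniform estimates on the $C^0$-norms of the coefficients chosen at each step, exploiting the positivity of $(I/I^2)^{\otimes n}$ to obtain geometric decay. The cleaner modern approach is to invoke Grauert's characterization of negative divisors: the hypothesis $C\cdot C<0$ makes $C$ an exceptional set, so it admits a strongly plurisubharmonic exhaustion on a neighborhood, yielding a Stein neighborhood basis off $C$. Cartan's Theorem B then provides the uniform control needed to promote the formal isomorphism to a convergent one in a neighborhood $U$ of $C$, realizing the desired biholomorphism $\Psi : U \to V \subset \mathbb{H}(-k)$ fixing $C$ pointwise onto the zero section.

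The main obstacle is convergence. The formal step is essentially an exercise in sheaf cohomology on $\mathbb{P}^1$, and its key input --- vanishing of $H^1$ for line bundles of degree $\geq -1$ --- is precisely what the hypothesis $C\cdot C < 0$ furnishes asymptotically. Passing from a formal tubular neighborhood to a genuine holomorphic one, however, is the deep content of Grauert's theorem: it requires either very careful majorant/norm estimates throughout the induction, or the nontrivial input that negative self-intersection implies plurisubharmonic convexity of a neighborhood of $C$. Once this convergence is in hand, the restriction of the resulting map to $C$ is the identity on the zero section by construction, so $\Psi$ meets all the requirements in the statement.
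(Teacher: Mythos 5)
The paper does not prove this statement: it is quoted as Grauert's theorem from \cite{Gra 62} and used as a black box in the proof of Proposition \ref{surface unicity}, so there is no in-paper argument to compare yours against. Judged on its own terms, your formal linearization step is sound, and in fact slightly more cautious than necessary: since $I/I^2\cong\mathcal{O}_{\mathbb{P}^1}(k)$ with $k>0$ and $T_S|_C$ is an extension of $\mathcal{O}(-k)$ by $\mathcal{O}(2)$, the obstruction groups $H^1\bigl(\mathbb{P}^1,\,T_S|_C\otimes S^n(I/I^2)\bigr)$ vanish for every $n\geq 1$, so there are no low-order obstructions to treat separately. (You also implicitly use Grothendieck's splitting theorem to identify the first infinitesimal neighborhood of $C$ with that of the zero section of $\mathcal{H}(-k)$; this deserves to be said explicitly, and it is precisely the other black-box theorem the paper quotes alongside Grauert's.)

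The genuine gap is the convergence step, and your closing paragraph essentially concedes it. The assertion that ``Cartan's Theorem B then provides the uniform control needed to promote the formal isomorphism to a convergent one'' is not an argument: Theorem B gives vanishing of coherent cohomology on Stein spaces, but no neighborhood of the compact curve $C$ is Stein, and passing from a compatible system of isomorphisms of infinitesimal neighborhoods to a convergent biholomorphism is exactly the \emph{formal principle} for exceptional sets --- the hard content of Grauert's paper. To close the argument you must either (i) run the two-chart induction with explicit majorant estimates, using the positivity of $(I/I^2)^{\otimes n}$ to obtain geometric decay of the correcting cochains (feasible, but none of the estimates appear in your sketch), or (ii) invoke the comparison between the cohomology of the formal completion and the analytic cohomology of an actual neighborhood, which rests on Grauert's finiteness and coherence results for exceptional sets rather than on Theorem B. As written, the proposal reduces Grauert's theorem to Grauert's theorem at the decisive step.
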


\begin{theorem}
[Grothendieck \cite{Gro 57}]Two complex line bundles over the Riemann sphere
have the same Chern class if and only if they are biholomorphic.
\end{theorem}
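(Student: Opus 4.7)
The plan is to classify holomorphic line bundles on $\mathbb{P}^{1}$ up to biholomorphism in terms of their Chern class by producing an explicit normal form for their transition cocycles. The necessity (biholomorphic implies same Chern class) is automatic from the functoriality of $c_{1}$, so the content lies entirely in the converse.

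First I would cover the Riemann sphere by the two standard charts $U_{0}$ and $U_{\infty}$, both biholomorphic to $\mathbb{C}$ with $U_{0}\cap U_{\infty}\cong\mathbb{C}^{\ast}$. Since each chart is contractible and Stein, $H^{1}(U_{\alpha},\mathcal{O}^{\ast})=0$, so any holomorphic line bundle $L\to\mathbb{P}^{1}$ trivializes on each chart and is therefore determined, up to isomorphism, by a single transition function $g\in\mathcal{O}^{\ast}(\mathbb{C}^{\ast})$, i.e.\ a nowhere-vanishing holomorphic function on $\mathbb{C}^{\ast}$.

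The key step is a Birkhoff-style factorization of $g$: I would show that every such $g$ admits a decomposition
\[
g(z) = g_{+}(z)\, z^{k}\, g_{-}(z),
\]
where $k\in\mathbb{Z}$ is the winding number of $g$ around the origin, $g_{+}$ extends to an invertible holomorphic function on $U_{0}$, and $g_{-}$ extends to an invertible holomorphic function on $U_{\infty}$. To do so, observe that $g(z)/z^{k}$ has winding number zero, so the logarithmic derivative $(g/z^{k})'/(g/z^{k})$ has vanishing period around $0$ and therefore admits a global primitive on $\mathbb{C}^{\ast}$, producing $g(z)=z^{k}e^{h(z)}$ for some $h$ holomorphic on $\mathbb{C}^{\ast}$. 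Splitting the Laurent expansion $h=h_{+}+h_{-}$ into its nonnegative and strictly negative parts and setting $g_{\pm}=e^{h_{\pm}}$ yields the required factorization.

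Interpreting this decomposition as a change of trivialization (absorb $g_{+}$ into the trivialization on $U_{0}$ and $g_{-}^{-1}$ into the trivialization on $U_{\infty}$) reduces $L$ to the bundle whose only transition function is $z^{k}$, so $L$ is biholomorphic to $\mathcal{O}(k)$. Since $c_{1}(\mathcal{O}(k))=k$ under the canonical identification $H^{2}(\mathbb{P}^{1},\mathbb{Z})\cong\mathbb{Z}$ (as one checks by counting zeros of a canonical meromorphic section, or by integrating the curvature form of a Hermitian metric), two line bundles share a Chern class if and only if they share the integer $k$, if and only if they are biholomorphic. The main obstacle is the Birkhoff factorization itself: constructing the holomorphic logarithm via the winding-number computation and verifying that the Laurent split really separates $g$ into invertible factors on each chart is the heart of the argument; once this is in place, the rest of the proof is bookkeeping.
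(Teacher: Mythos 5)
Your argument is correct and complete, but it is worth pointing out that the paper does not prove this statement at all: it is quoted as a classical result of Grothendieck and used as a black box in the proof of Proposition \ref{surface unicity}. What you have written is the standard self-contained proof of the rank-one case of the Birkhoff--Grothendieck theorem, and every step holds up. The reduction to a single transition function $g\in\mathcal{O}^{\ast}(\mathbb{C}^{\ast})$ is justified exactly as you say (the exponential sequence gives $H^{1}(\mathbb{C},\mathcal{O}^{\ast})=0$ since $\mathbb{C}$ is Stein and contractible); the integer $k=\frac{1}{2\pi i}\oint_{|z|=1}\frac{g'}{g}\,dz$ kills the only period of the logarithmic derivative of $g/z^{k}$, so $g=z^{k}e^{h}$ with $h$ holomorphic on $\mathbb{C}^{\ast}$; and the Laurent splitting $h=h_{+}+h_{-}$ converges where you need it to because the annulus of convergence is all of $0<|z|<\infty$, so $h_{+}$ is entire and $h_{-}$ is holomorphic on a neighborhood of $\infty$ with $h_{-}(\infty)=0$, whence $e^{h_{\pm}}$ are invertible on the respective charts. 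Absorbing these factors into the trivializations is precisely the cocycle equivalence $\tilde{g}=u_{0}\,g\,u_{\infty}^{-1}$, so $L\cong\mathcal{O}(k)$, and since $k\mapsto c_{1}(\mathcal{O}(k))$ is injective (by counting zeros of a section; the overall sign convention is immaterial), the classification follows. In the rank-one case the Birkhoff factorization is genuinely elementary because one can pass to logarithms, which is exactly why this route is shorter than the general matrix-valued factorization needed for higher-rank bundles; for the purposes of the paper either your proof or the citation suffices, since only the line-bundle case enters the argument.
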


\begin{proof}
[Proof of Proposition \ref{surface unicity}]The proof is performed by
induction on the number of projective lines in the chain. If the chain is
composed by just one projective line, the result follows immediately from the
theorems of Grauert and Grothendieck. Suppose the result is true for all
chains composed by $n\geq1$ projective lines and let $D_{j}$ have $n+1$
projective lines. From the hypothesis, $D_{j}$ has two intersecting projective
lines, namely $C_{j}^{1}$ and $C_{j}^{2}$, with self-intersection numbers
given respectively by $-1$ and $-2$. Hence, applying Grauert's and
Grothendieck's theorems, we obtain that a neighborhood of each curve is
biholomorphic to a neighborhood of the zero section of the Hopf bundle with
Chern classes given by their self-intersection numbers. Thus we can blow down
a neighborhood of the curve $C_{j}^{1}$ obtaining yet an analytic surface
defined in a neighborhood of a Riemann sphere, say $\pi(C_{j}^{2})$ --- where
$\pi$ stands for the blow down. Since $\pi(C_{j}^{2})$ is smooth, it is well
known that its self-intersection number is $-1$ (cf. e.g. \cite{Lau 71}). The
result now follows from the induction hypothesis.
\end{proof}

\begin{remark}
Although two foliations in $\mathcal{QHS}_{\omega}^{c}$ are not necessarily
defined in the same ambient surface, they all can be modeled by $(\mathcal{F}%
:\omega=0)$ in the sense that they are analytically componentwise equivalent
to $\mathcal{F}$. Anyway, the ambient surface will be automatically equivalent
whenever they have equivalent cocycles (definition found below).
\end{remark}

\subsection{Analytic cocycles\label{anal. cocycles}}

We construct some cocycles associated with analytically componentwise
equivalent foliations. In some sense, these cocycles measure how far two
analytically componentwise equivalent foliations are from being analytically equivalent.

Let $\mathcal{F}^{o}\in\mathcal{QHS}$, $\widetilde{\mathcal{F}}^{o}$ its
minimal resolution, and $M^{o}=M_{\widetilde{\mathcal{F}}^{o}}$ the ambient
surface where $\widetilde{\mathcal{F}}^{o}$ is defined. Let
$\operatorname*{Pseudo}(M^{o})$ denote the pseudogroup of transformations of
$M^{o}$ and $\operatorname*{Aut}(\widetilde{\mathcal{F}}^{o})$ denote its
subset given by those $\phi\in\operatorname*{Pseudo}(M^{o})$ satisfying the
following properties:

\begin{enumerate}
\item[(a)] $\phi:U\longrightarrow\phi(U)$ preserves the Hopf components of the
exceptional divisor, i.e. $\phi(U\cap D_{j})=\phi(U)\cap$ $D_{j}$;

\item[(b)] $\phi$ fixes the singularities of $\widetilde{\mathcal{F}}^{o}$,
i.e. $\left.  \phi\right\vert _{\operatorname*{Sing}(\widetilde{\mathcal{F}%
}^{o})}=\left.  \operatorname*{id}\right\vert _{\operatorname*{Sing}%
(\widetilde{\mathcal{F}}^{o})}$;

\item[(c)] $\phi$ preserves the leaves of $\widetilde{\mathcal{F}}_{j}^{o}$,
i.e. $\phi^{\ast}(\left.  \widetilde{\mathcal{F}}_{j}^{o}\right\vert
_{\phi(U)})=\left.  \widetilde{\mathcal{F}}_{j}^{o}\right\vert _{U}$.
\end{enumerate}

At this point, some comments about the above definition are worthwhile. First,
notice that all conditions can be verified explicitly. The first two are quite
obvious and the third can be achieved with the aid of the path lifting
procedure. In fact, choose a section $\Sigma$ transversal to $D_{j}$ and pick
an element $\psi:\phi(\Sigma)\longrightarrow\Sigma$ of the classical holonomy
pseudogroup of $\left.  \widetilde{\mathcal{F}}_{j}^{o}\right\vert _{U}$ with
respect to $D_{j}$. Since the holonomy characterizes $\left.  \widetilde
{\mathcal{F}}_{j}^{o}\right\vert _{U}$ (cf. Proposition \ref{holonomy lifting}%
, \cite{MaMo 80}, \cite{MaRa 83}), it is enough to verify that $\psi\circ
\phi\in\operatorname*{Diff}(\Sigma)$ commutes with the generators of
$\operatorname*{Hol}_{\Sigma}(\left.  \widetilde{\mathcal{F}}_{j}%
^{o}\right\vert _{U},D_{j})$. Further, note that we decided to deal with just
local and semilocal leaves (i.e. those determined by the holonomies of
$\left.  \widetilde{\mathcal{F}}_{j}^{o}\right\vert _{U}$) avoiding, for the
time been, questions related with Dulac maps (cf. \cite{CaSc 2001}, \cite{CaSc
2003}) that are very difficult to handle concretely in the global sense. This
task will be performed by the pasting cocycles we define next.

\begin{definition}
Let $(\mathcal{F}:\omega=0)$ be a germ of a foliation in $(\mathbb{C}^{2},0)$.
Then the set%
\[
\operatorname*{Aut}(\mathcal{F})=\{\phi\in\operatorname*{Diff}(\mathbb{C}%
^{2},0):\phi^{\ast}\omega\wedge\omega=0\}
\]
is called the group of automorphisms of $\mathcal{F}$. Further, if
$f:(M,S)\rightarrow S$ is a fibration transversal to $\mathcal{F}$, then
$\operatorname*{Aut}(\mathcal{F},f)$ denote the subgroup determined by
elements of $\operatorname*{Aut}(\mathcal{F})$ preserving $f$.
\end{definition}

Let $(\mathcal{F}:\omega=0)$ be a generalized curve and pick $\mathcal{F}^{o}$
analytically componentwise equivalent to $\mathcal{F}$ such that
$\operatorname*{Sep}(\widetilde{\mathcal{F}}_{j}^{o})$ consists of fibers of a
fibration $f_{j}:(\mathcal{H}_{j},D_{j})\longrightarrow D_{j}$ transversal to
$\widetilde{\mathcal{F}}_{j}^{o}$ (such a resolution exists from \cite{Ca
01}). Then $\mathcal{F}^{o}$ is called a \textit{fixed model} for
$\mathcal{F}$ and a map $\Phi_{j}\in\operatorname*{Diff}(\widetilde
{\mathcal{F}}_{j},\widetilde{\mathcal{F}}_{j}^{o})$ is called a
\textit{projective chart}\textbf{ }for $\widetilde{\mathcal{F}}$ with respect
to $\widetilde{\mathcal{F}}^{o}$. From we have done before, it is
straightforward that:

\begin{lemma}
\label{project. charts}For each $\widetilde{\mathcal{F}}_{j}=\left.
\widetilde{\mathcal{F}}\right\vert _{(\mathcal{H}_{j},D_{j})}$ and each fixed
model component $\widetilde{\mathcal{F}}_{j}^{o}$, there exists only one
projective chart up to left composition with an element of
$\operatorname*{Aut}(\widetilde{\mathcal{F}}_{j}^{o})$.
\end{lemma}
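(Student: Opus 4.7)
The plan is to establish existence and uniqueness of the projective chart separately; existence follows immediately from the definition of a fixed model, and uniqueness is reduced to showing that the composition of any two projective charts lies in $\operatorname*{Aut}(\widetilde{\mathcal{F}}_{j}^{o})$.

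Concretely, existence is built into the setup: since $\widetilde{\mathcal{F}}^{o}$ is a fixed model for $\widetilde{\mathcal{F}}$, the Hopf components $\widetilde{\mathcal{F}}_{j}$ and $\widetilde{\mathcal{F}}_{j}^{o}$ are analytically equivalent, and any choice of analytic equivalence $\Phi_{j}:(\mathcal{H}_{j},D_{j})\to(\mathcal{H}_{j},D_{j})$ is a projective chart by definition. Alternatively, since $\widetilde{\mathcal{F}}_{j}^{o}$ comes equipped with a transversal fibration $f_{j}$ and the projective holonomies of $\widetilde{\mathcal{F}}_{j}$ and $\widetilde{\mathcal{F}}_{j}^{o}$ can be conjugated (because the two foliations share separatrix sets and linear data along $D_{j}$), one can invoke Proposition \ref{holonomy lifting} to construct $\Phi_{j}$ explicitly by lifting this conjugation along the leaves.

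For uniqueness, take two projective charts $\Phi_{j},\Psi_{j}$ and set $\phi:=\Psi_{j}\circ\Phi_{j}^{-1}$. Then $\phi$ is a germ of a self-diffeomorphism of $(\mathcal{H}_{j},D_{j})$ with $\phi_{\ast}(\widetilde{\mathcal{F}}_{j}^{o})=\widetilde{\mathcal{F}}_{j}^{o}$, so conditions (a) (preservation of the Hopf component) and (c) (preservation of the foliation) in the definition of $\operatorname*{Aut}(\widetilde{\mathcal{F}}_{j}^{o})$ are immediate. What remains is condition (b), namely that $\phi$ fixes the singular set pointwise; if this holds, then $\Psi_{j}=\phi\circ\Phi_{j}$ with $\phi\in\operatorname*{Aut}(\widetilde{\mathcal{F}}_{j}^{o})$ and the proof is complete.

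The hard part is exactly this last verification. It follows from the fact that the singularities of $\widetilde{\mathcal{F}}_{j}$ and $\widetilde{\mathcal{F}}_{j}^{o}$ are canonically identified through the shared separatrix set and the combinatorial structure of the weighted dual graph: corners are pinned down by the pairs of intersecting divisor components, while the remaining singularities on $D_{j}$ are labelled by the branches of $\operatorname*{Sep}(\mathcal{F})$ meeting $D_{j}$. Both $\Phi_{j}$ and $\Psi_{j}$ must respect this labelling, up to permutations realized by elements of $\operatorname*{Aut}(\widetilde{\mathcal{F}}_{j}^{o})$. Absorbing any such permutation into the automorphism on the left (which is allowed because the statement only claims uniqueness modulo $\operatorname*{Aut}(\widetilde{\mathcal{F}}_{j}^{o})$), we may assume the two charts induce the same identification of singular points, and then $\phi$ fixes each of them, yielding $\phi\in\operatorname*{Aut}(\widetilde{\mathcal{F}}_{j}^{o})$ as required.
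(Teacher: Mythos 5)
Your overall skeleton is right and matches what the paper intends (the paper offers no written proof, dismissing the lemma as ``straightforward'' from the preceding definitions): existence comes from the componentwise equivalence built into the notion of a fixed model, and uniqueness reduces to checking that $\phi:=\Psi_{j}\circ\Phi_{j}^{-1}$ satisfies conditions (a)--(c). Conditions (a) and (c) are indeed immediate, as you say.

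The problem is your treatment of condition (b). You propose to handle a possible mismatch in the identification of singular points by ``absorbing'' a permutation of the singularities into an element of $\operatorname*{Aut}(\widetilde{\mathcal{F}}_{j}^{o})$. But condition (b) in the definition of that group says precisely that its elements restrict to the identity on $\operatorname*{Sing}(\widetilde{\mathcal{F}}_{j}^{o})$, so no nontrivial permutation of singular points is ever realized by an element of $\operatorname*{Aut}(\widetilde{\mathcal{F}}_{j}^{o})$; the absorption step is vacuous when the permutation is trivial and impossible when it is not. The actual reason (b) holds is definitional and much shorter: a projective chart is an element of $\operatorname*{Diff}_{\widetilde{\mathcal{F}}_{j},\widetilde{\mathcal{F}}_{j}^{o}}(\mathcal{H}_{j},D_{j})$, and that set was defined with the normalization $\Phi|_{S}=\operatorname*{id}$ on the common singular set $S$. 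Hence both $\Phi_{j}$ and $\Psi_{j}$ fix every singular point, and so does $\phi=\Psi_{j}\circ\Phi_{j}^{-1}$, with no combinatorial argument needed. Note that this normalization is not cosmetic: if projective charts were merely required to conjugate the two foliations, a chart post-composed with a symmetry of $\widetilde{\mathcal{F}}_{j}^{o}$ permuting its singularities would yield a genuinely distinct class modulo $\operatorname*{Aut}(\widetilde{\mathcal{F}}_{j}^{o})$, and the lemma as stated would fail. So you should delete the permutation argument and instead quote the defining condition $\Phi|_{S}=\operatorname*{id}$.
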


Let $D=\cup D_{j}$ be the exceptional divisor of $\widetilde{\mathcal{F}}^{o}%
$. One says that $\mathcal{U}:=\cup U_{j}$ is a \textit{good covering} for $D$
if each\ $U_{j}$ is a simply-connected neighborhood of $D_{j}\subset
\mathcal{H}_{j}$ and each intersections $U_{i}\cap U_{j}$ is simply-connected.
For each good covering $\mathcal{U}$ and each foliation $\mathcal{F}$ one can
associate a cocycle $\Phi(\mathcal{F}):=(\Phi_{i,j})$ given by $\Phi
_{i,j}:=\Phi_{i}\circ\Phi_{j}^{-1}$ where each $\Phi_{i}$ is a projective
chart for $\widetilde{\mathcal{F}}$ with respect to $\widetilde{\mathcal{F}%
}^{o}$. Note that $(\Phi_{i,j})$ does not depend neither on the fixed models
nor on the chosen (good) covering up to analytically componentwise equivalence class.

\begin{proposition}
\label{theta}Two analytically componentwise equivalent generalized curves
$\mathcal{F}$ and $\mathcal{G}$ are analytically equivalent if and only if
$\Phi(\mathcal{F})=\Phi(\mathcal{G})$.
\end{proposition}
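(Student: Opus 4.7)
\medskip

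\noindent\textbf{Proof proposal.} The strategy is to interpret $\Phi(\mathcal{F})=(\Phi_{i,j})$ as a $1$-cocycle with values in the pseudogroup $\operatorname{Aut}(\widetilde{\mathcal{F}}^{o})$ built out of a fixed model, show that analytic equivalence of $\mathcal{F}$ and $\mathcal{G}$ forces equality of these cocycles (up to the coboundary ambiguity of Lemma~\ref{project. charts}), and conversely use equality of cocycles to glue the local projective charts into a global biholomorphism that descends through the blow-down.

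\smallskip

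\noindent\emph{Necessity.} Suppose $\Psi\in\operatorname{Diff}(\mathbb{C}^{2},0)$ realizes an analytic equivalence between $\mathcal{F}$ and $\mathcal{G}$. Lift $\Psi$ to a biholomorphism $\widetilde{\Psi}:M_{\widetilde{\mathcal{F}}}\to M_{\widetilde{\mathcal{G}}}$ conjugating the resolutions; because the resolution is minimal, $\widetilde{\Psi}$ preserves the weighted dual tree and sends each Hopf component of $\widetilde{\mathcal{F}}$ onto the corresponding one of $\widetilde{\mathcal{G}}$. Choosing a common good covering $\mathcal{U}=\cup U_{j}$ and projective charts $\Psi_{j}$ for $\widetilde{\mathcal{G}}$ relative to the same fixed model $\widetilde{\mathcal{F}}^{o}$, the composition $\Psi_{j}\circ\widetilde{\Psi}|_{U_{j}}$ is a projective chart for $\widetilde{\mathcal{F}}_{j}$. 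By Lemma~\ref{project. charts} there exist $\sigma_{j}\in\operatorname{Aut}(\widetilde{\mathcal{F}}_{j}^{o})$ with $\Phi_{j}=\sigma_{j}\circ\Psi_{j}\circ\widetilde{\Psi}$. A direct substitution gives $\Phi_{i}\circ\Phi_{j}^{-1}=\sigma_{i}\circ(\Psi_{i}\circ\Psi_{j}^{-1})\circ\sigma_{j}^{-1}$, so $\Phi(\mathcal{F})$ and $\Phi(\mathcal{G})$ are cohomologous, i.e.\ they determine the same class, which is what $\Phi(\mathcal{F})=\Phi(\mathcal{G})$ means.

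\smallskip

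\noindent\emph{Sufficiency.} Using the Lemma~\ref{project. charts} ambiguity, I normalize the projective charts so that the two cocycles agree as functions: $\Phi_{i}\circ\Phi_{j}^{-1}=\Psi_{i}\circ\Psi_{j}^{-1}$ on $U_{i}\cap U_{j}$. Rearranging yields $\Psi_{i}^{-1}\circ\Phi_{i}=\Psi_{j}^{-1}\circ\Phi_{j}$ on each pairwise overlap, so the formula
\[
H|_{U_{i}}:=\Psi_{i}^{-1}\circ\Phi_{i}
\]
defines a global biholomorphism $H$ from a neighborhood of the divisor of $\widetilde{\mathcal{F}}$ to a neighborhood of the divisor of $\widetilde{\mathcal{G}}$, carrying $\widetilde{\mathcal{F}}$ to $\widetilde{\mathcal{G}}$ componentwise and therefore globally. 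The map $H$ respects the tree structure of both exceptional divisors (each $\Psi_{i}^{-1}\circ\Phi_{i}$ already does), so by Proposition~\ref{surface unicity} the source and target are canonically identifiable with the minimal resolution of $(\mathbb{C}^{2},0)$ and the blow-down of $H$ gives a germ $\Psi\in\operatorname{Diff}(\mathbb{C}^{2},0)$ with $\Psi_{\ast}\mathcal{F}=\mathcal{G}$.

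\smallskip

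\noindent\emph{Main obstacle.} The delicate point is not the gluing itself but making sense of the equality $\Phi(\mathcal{F})=\Phi(\mathcal{G})$ in the presence of the $\operatorname{Aut}(\widetilde{\mathcal{F}}_{j}^{o})$ ambiguity of Lemma~\ref{project. charts}; one must see this as equality in a nonabelian $\check{H}^{1}(\mathcal{U},\operatorname{Aut}(\widetilde{\mathcal{F}}^{o}))$ and exploit the coboundary freedom in the sufficiency step to actually normalize representatives before gluing. A secondary point is checking that $H$ extends across the corners and the separatrices: componentwise $H$ is holomorphic off a codimension-one set invariant for the foliation, and the extension is justified by Hartogs--Riemann in the spirit of Proposition~\ref{holonomy lifting}, after which the blow-down to $(\mathbb{C}^{2},0)$ is automatic from Proposition~\ref{surface unicity}.
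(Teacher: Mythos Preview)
Your proposal is correct and follows essentially the same route as the paper: in both directions you use Lemma~\ref{project. charts} to relate projective charts via elements of $\operatorname{Aut}(\widetilde{\mathcal{F}}_{j}^{o})$, obtain the coboundary relation $\Phi_{i}\circ\Phi_{j}^{-1}=\sigma_{i}\circ(\Psi_{i}\circ\Psi_{j}^{-1})\circ\sigma_{j}^{-1}$ for necessity, and for sufficiency rearrange this into a gluing condition to define $H$ componentwise. The paper's proof is terser---it leaves the blow-down and the interpretation of equality in nonabelian $\check{H}^{1}$ implicit---so your added remarks on Proposition~\ref{surface unicity} and the extension across corners are welcome clarifications rather than a different argument.
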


\begin{proof}
Let $\Phi(\mathcal{F})=(\Phi_{1}\circ\Phi_{2}^{-1},\cdots,\Phi_{k-1}\circ
\Phi_{k}^{-1})$ and $\Phi(\mathcal{G})=(\Psi_{1}\circ\Psi_{2}^{-1},\cdots
,\Psi_{k-1}\circ\Psi_{k}^{-1})$. First, let us verify the necessary part.
Suppose $H$ is a global conjugation between $\mathcal{F}$ and $\mathcal{G}$,
i.e. $H^{\ast}(\mathcal{G})=\mathcal{F}$. From Lemma \ref{project. charts},
there is $\Xi_{j}\in\operatorname*{Aut}(\widetilde{\mathcal{F}}_{j}^{o})$ such
that $\Psi_{j}=\Xi_{j}\circ\Phi_{j}\circ H$. Therefore%
\begin{align*}
\Psi_{j-1}\circ\Psi_{j}^{-1}  &  =\Xi_{j-1}\circ\Phi_{j-1}\circ H\circ
H^{-1}\circ\Phi_{j}^{-1}\circ\Xi_{j}^{-1}\\
&  =\Xi_{j-1}\circ\Phi_{j-1}\circ\Phi_{j}^{-1}\circ\Xi_{j}^{-1}.
\end{align*}
Now let us verify the sufficient part. Notice that $\mathcal{F}$ and
$\mathcal{G}$ have the same fixed model. Hence, if $(\Phi_{1}\circ\Phi
_{2}^{-1},\cdots,\Phi_{k-1}\circ\Phi_{k}^{-1})=\Phi(\mathcal{F})=\Phi
(\mathcal{G})=(\Psi_{1}\circ\Psi_{2}^{-1},\cdots,\Psi_{k-1}\circ\Psi_{k}%
^{-1})$, there is\ a collection $(\Xi_{j})\subset\operatorname*{Aut}%
(\widetilde{\mathcal{F}}_{j}^{o})$ such that $\Psi_{j-1}\circ\Psi_{j}^{-1}%
=\Xi_{j-1}\circ\Phi_{j-1}\circ\Phi_{j}^{-1}\circ\Xi_{j}^{-1}$. Therefore
$\left(  \Xi_{j-1}\circ\Phi_{j-1}\right)  ^{-1}\circ\Psi_{j-1}=\left(  \Xi
_{j}\circ\Phi_{j}\right)  ^{-1}\circ\Psi_{j}$. Thus we can define a global
conjugation between them just by letting $H:=\left(  \Xi_{j}\circ\Phi
_{j}\right)  ^{-1}\circ\Psi_{j}$ for all $j=1,\ldots,k$.
\end{proof}

\begin{remark}
It is not difficult to verify that $\operatorname*{Aut}(\widetilde
{\mathcal{F}}^{o})$ is itself a pseudogroup of transformations of $M^{o}$.
Therefore the sheaf of germs of elements of $\operatorname*{Aut}%
(\widetilde{\mathcal{F}}^{o})$, generated by inductive limit, is a sheaf of
groupoids over the exceptional divisor $D^{o}$ of $\widetilde{\mathcal{F}}%
^{o}$ (cf. \cite{Ha 58}). We denote this sheaf by $\mathfrak{Aut}%
_{\widetilde{\mathcal{F}}^{o}}$. Consider the first cohomology set
$H^{1}(\mathcal{U},\mathfrak{Aut}$$_{\widetilde{\mathcal{F}}^{o}})$, and let
$H^{1}(D,\mathfrak{Aut}_{\widetilde{\mathcal{F}}^{o}})$ be the inductive limit
of $H^{1}(\mathcal{U},\mathfrak{\operatorname*{Aut}}_{\widetilde{\mathcal{F}%
}^{o}})$ for all good coverings of $D$. Then Proposition \ref{surface unicity}
ensures that the map
\[%
\begin{array}
[c]{ccc}%
\mathcal{QHS}_{\omega}^{c}\! & \overset{\Phi}{\longrightarrow} &
Z^{1}(D,\mathfrak{\operatorname*{Aut}}_{\widetilde{\mathcal{F}}^{o}})\\
\mathcal{F} & \mapsto & (\Phi_{i,j}):=\Phi_{i}\circ\Phi_{j}^{-1}%
\end{array}
\]
is well defined and onto $H^{1}(D,\mathfrak{\operatorname*{Aut}}%
_{\widetilde{\mathcal{F}}^{o}})$. Since $\Phi(\mathcal{F})$ does not depend on
the fixed models up to componentwise equivalence class, it determines a
characteristic class for generalized curves appearing as obstruction for the
global pasting of analytically componentwise isomorphisms. For the reader not
acquainted with groupoids and the cohomology of their sheaves, we refer to
\cite{Eh 61}, \cite{Eh 65} and \cite{Ha 58}.
\end{remark}

\section{Trivializing cocycles\label{vanishing}}

We use the algebraic and geometric features of the separatrix set in order to
construct an auxiliary fibration that helps us to trivialize the cocycles. For
this sake, we have first to introduce the concept of leaf preserving
automorphism. Further, we use the geometry of the divisors of both the
foliation and the fibration in order to provide a method for trivializing
$\Phi(\mathcal{F})$.

\subsection{Quasi-homogeneous polynomials and companion fibrations}

In order to prove Theorem ~B, we need to perform an accurate
geometric analysis of the interplay between the foliation $\mathcal{F}$ and
its companion fibration $\mathcal{G}$.

\subsubsection{Multivalued first integrals and the branches of $\mathcal{F}%
$.\label{coord. syst. and integrals}}

Let $\mathcal{F}\in\mathcal{QHS}_{\omega,f}^{c}$ where
\begin{equation}
f(x,y)=\mu y^{m}x^{n}\prod_{j=1}^{d}(y^{p}-\lambda_{j}x^{q}), \label{eq1}%
\end{equation}
$1\leq p<q,m,n\in\mathbb{N}^{\ast}$, $\gcd(p,q)=1$, and $\lambda_{j},\mu
\in\mathbb{C}^{\ast}$. Then we order the first projective line to arise in the
course of the resolution process with $1$, the next one intersecting it with
$2$, and so on (see Lemma \ref{companion fibration} and Figure 7), until we
reach the last projective line in the minimal resolution. Recall that the
principal projective line is denoted by $D_{\operatorname*{pr}(\widetilde
{\mathcal{F}})}$ or $D_{\ell}$ where $\ell=\operatorname*{pr}(\widetilde
{\mathcal{F}})$. For the sake of simplicity we call the subset of
$\widetilde{\mathcal{F}}$ given by $\mathcal{B}_{+}\mathcal{F}:=\cup_{j>\ell
}\widetilde{\mathcal{F}}_{j}$ (respect. $\mathcal{B}_{-}\mathcal{F}%
:=\cup_{j<\ell}\widetilde{\mathcal{F}}_{j}$) the \textit{positive} (respect.
\textit{negative}) \textit{branch} of $\widetilde{\mathcal{F}}$. We are in a
position to state the following geometric characterization of the branches of
$\widetilde{\mathcal{F}}$.%

\begin{center}
\includegraphics[
trim=0.017000in 0.000000in -0.017000in 0.000000in,
height=0.7092in,
width=3.0381in
]%
{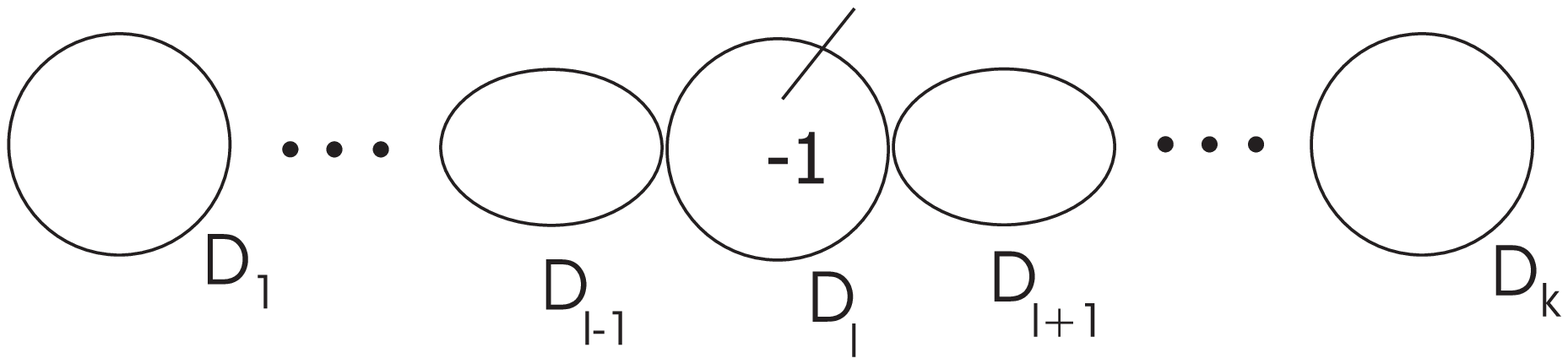}%
\\
Figure 7: The principal projective line for $p\neq1$.
\end{center}

\begin{lemma}
\label{linear models}$\widetilde{\mathcal{F}}_{j}$ is linearizable for each
$j\neq\operatorname*{pr}(\widetilde{\mathcal{F}})$. In particular, it has a
multivalued first integral. More precisely, there is $\Phi_{j}\in
\operatorname*{Diff}_{\widetilde{\mathcal{F}}_{j},\widetilde{\mathcal{F}}%
_{j}^{lin}}(\mathcal{H}_{j},D_{j})$ where $(\widetilde{\mathcal{F}}_{j}%
^{lin}:d\widetilde{f}_{j}^{lin}=0)$ is given by the (global) multivalued first
integral%
\[
\left\{
\begin{array}
[c]{l}%
\widetilde{f}_{j}^{lin}(t_{j},x_{j})=t_{j}^{\nu_{j}}x_{j}^{\mu_{j}},\\
\widetilde{f}_{j}^{lin}(u_{j},y_{j})=u_{j}^{k_{j}\mu_{j}-\nu_{j}}y_{j}%
^{\mu_{j}},
\end{array}
\right.  \text{ }%
\]
where $\nu_{j},\mu_{j}\in\mathbb{C}$ are non-resonant and $-k_{j}$ is the
first Chern class of $\mathbb{H}_{j}$ for all $j\neq\ell$.
\end{lemma}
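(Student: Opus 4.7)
The strategy is to exploit the linear-chain structure of the minimal resolution of a quasi-homogeneous curve: each non-principal Hopf component $\widetilde{\mathcal{F}}_j$ is a foliation on $\mathbb{H}(-k_j)$ whose singular locus consists only of the two (or one, if $D_j$ is an end) corner singularities connecting $D_j$ to its neighbors. I would then linearize each such component via a holonomy argument.

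First, by Lemmas~\ref{companion fibration} and \ref{companion 2}, for $j \neq \operatorname{pr}(\widetilde{\mathcal{F}})$ the strict transform $\widetilde{C}$ does not meet $D_j$ at any smooth point, so $\operatorname{Sing}(\widetilde{\mathcal{F}}_j) \cap D_j$ consists only of the corners $c_{j-1,j}, c_{j,j+1}$. Each such corner is a reduced non-degenerate saddle because $\mathcal{F}$ is a generalized curve (no saddle-nodes along the resolution). Using the Camacho--Sad formula on $D_j$ together with the self-intersection numbers provided by the Corollary of Section~4, the Camacho--Sad indices at the two corners are determined by the Euclid algorithm of $(p,q)$ and sum to $-k_j$. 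This pins down a pair of non-resonant exponents $(\nu_j,\mu_j)$ such that the candidate linear model on $\mathbb{H}(-k_j)$ admits the multivalued first integral $t_j^{\nu_j} x_j^{\mu_j}$; applying the Hopf transition $y_j = t_j^{k_j} x_j$, $u_j = 1/t_j$ rewrites this as $u_j^{k_j \mu_j - \nu_j} y_j^{\mu_j}$ on the opposite chart, matching the statement.

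The main difficulty is upgrading this formal picture to a genuine analytic linearization $\Phi_j \in \operatorname{Diff}_{\widetilde{\mathcal{F}}_j, \widetilde{\mathcal{F}}_j^{\mathrm{lin}}}(\mathcal{H}_j, D_j)$. Here I would invoke a Mattei--Moussu-type argument: because $D_j \setminus \operatorname{Sing}(\widetilde{\mathcal{F}}_j)$ is a punctured sphere with cyclic fundamental group, the projective holonomy of $\widetilde{\mathcal{F}}_j$ is generated by a single element $h$, and showing that $h$ is analytically conjugate to its linear part is enough to produce an analytic linearization of $\widetilde{\mathcal{F}}_j$ on a neighborhood of $D_j$ (by Proposition~\ref{holonomy lifting} applied to the comparison with $\widetilde{\mathcal{F}}_j^{\mathrm{lin}}$). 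The delicate point, and the real obstacle, is certifying the linearizability of $h$: one must rule out Dulac-type invariants that the no-saddle-node hypothesis alone does not immediately kill. This is where the interaction of the explicit arithmetic of the self-intersections (via the Camacho--Sad relation) with the generalized-curve hypothesis does the work, forcing $h$ into its linear normal form and delivering the desired $\Phi_j$.
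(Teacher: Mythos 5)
Your geometric setup is correct and close to the paper's: the non-principal components carry only corner singularities, $\pi_1(D_j\setminus\operatorname{Sing}(\widetilde{\mathcal{F}}_j))$ is cyclic, and by Proposition~\ref{holonomy lifting} it suffices to linearize the single holonomy generator $h$. You also correctly isolate the linearizability of $h$ as the crux. The problem is that your proposed resolution of that crux is not a proof. Neither the generalized-curve hypothesis nor the Camacho--Sad arithmetic forces $h$ into linear normal form: the corner singularities of a $\mathcal{QHS}$ foliation typically have negative rational eigenvalue ratio (they are resonant saddles), and resonant saddles of generalized curves carry nontrivial Martinet--Ramis moduli in general; the self-intersection numbers only control the multiplier of $h$, never its nonlinear conjugacy class. (As a side point, the Camacho--Sad theorem applied to a single $D_j$ with two singularities yields one relation for two unknown indices, so it does not by itself ``pin down'' $(\nu_j,\mu_j)$; these are obtained by propagating from the ends of the chain, where a component has a single singularity.)

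The missing idea is that linearizability must be \emph{seeded} by the hypothesis that $\mathcal{F}$ is commode or generic and then \emph{propagated} along the linear chain. In the paper's proof, genericity guarantees that the corner singularities on the principal projective line are linearizable (Poincar\'e domain, or Siegel domain with Yoccoz's theorem, cf.\ \cite{Yoc 88}); in the commode case the end components have a single singularity, hence trivial --- a fortiori linearizable --- holonomy. One then moves along the chain: each non-principal $\widetilde{\mathcal{F}}_j$ has at most two singularities whose local holonomies with respect to $D_j$ generate the same cyclic group, and one of those two singularities is shared with a component already known to be linearizable; by Mattei--Moussu a reduced non-degenerate singularity is linearizable if and only if its holonomy is, so the generator of $\operatorname{Hol}(\widetilde{\mathcal{F}}_j,D_j)$ is linearizable, and Proposition~\ref{holonomy lifting} upgrades this to a conjugation of the whole component with $\widetilde{\mathcal{F}}_j^{lin}$. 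Without this seeding-and-induction step your argument has no source of linearizability at all, and indeed the statement fails without the commode-or-generic hypothesis.
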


\begin{proof}
Since $\mathcal{F}$ is generic, then the corner singularities of
$\widetilde{\mathcal{F}}_{\operatorname*{pr}(\widetilde{\mathcal{F}})}$ are
linearizable (cf. \cite{Yoc 88}). But Lemmas \ref{companion fibration} and
\ref{companion 2} ensure that $\widetilde{\mathcal{F}}_{j}$ has at most two
singularities for all $j\neq\operatorname*{pr}(\widetilde{\mathcal{F}})$, thus
both singularities share the same holonomy with respect to $D_{j}$. Recall
from \cite{MaMo 80} that a reduced and non-degenerate (i.e. a non saddle-node)
singularity is linearizable if and only if its holonomy is linearizable. Thus
Proposition \ref{holonomy lifting} ensures that $\widetilde{\mathcal{F}}_{j}$
is linearizable whenever one of its singularities is linearizable.
\end{proof}

Let $(x,y)$ be the system of coordinates about the origin in which
$\operatorname*{Sep}(\mathcal{F})$ assumes the form (\ref{eq1}), then it
induces canonical affine coordinates for $M:=\cup_{j=1}^{n}\mathbb{H}%
_{j}(-k_{j})$, denoted by
\begin{equation}
\mathcal{A}:=\{(t_{j},x_{j}),(u_{j},y_{j}):u_{j}=1/t_{j},y_{j}=t_{j}^{k_{j}%
}x_{j},y_{j}=t_{j+1},u_{j}=x_{j+1}\}. \label{eq3}%
\end{equation}
Now we prove that $\mathcal{B}_{+}\mathcal{F}$ (respect. $\mathcal{B}%
_{-}\mathcal{F}$) has a multivalued first integral and describe its feature in
this system of coordinates. But first recall that $\mathbb{D}_{r}$ denotes the
disk centered at the origin with radius $r$.

\begin{lemma}
\label{branch integrals}$\mathcal{B}_{+}\mathcal{F}$ (respect. $\mathcal{B}%
_{-}\mathcal{F}$) has a multivalued first integral denoted by $\widetilde
{f}_{+}$ (respect. $\widetilde{f}_{-}$). More precisely, $\widetilde{f}_{+}$
(respect. $\widetilde{f}_{-}$) is given in the system of coordinates
$\mathcal{A}$ by $\widetilde{f}_{+}=\widetilde{f}_{j}$ where
\[
\left\{
\begin{array}
[c]{l}%
\widetilde{f}_{j}(t_{j},x_{j})=t_{j}^{\nu_{j}}x_{j}^{\mu_{j}}U_{j}(t_{j}%
,x_{j})\text{,}\\
\widetilde{f}_{j}(u_{j},y_{j})=u_{j}^{k_{j}\mu_{j}-\nu_{j}}y_{j}^{\mu_{j}%
}V_{j}(u_{j},y_{j})\text{,}%
\end{array}
\right.
\]
with $U_{j},V_{j}\in\mathcal{O}^{\ast}(\mathbb{D}_{\epsilon}\times
\mathbb{D}_{1+\epsilon})$ for some $\epsilon>0$ and all $j=1,\ldots,\ell-1$
(respect. $j=\ell+1,\ldots,n-1$).
\end{lemma}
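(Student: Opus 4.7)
The plan is to build $\widetilde{f}_{+}$ by pasting together the local linearizing first integrals supplied by Lemma~\ref{linear models} along the linear chain $\mathcal{B}_{+}\mathcal{F}=\cup_{j>\ell}\widetilde{\mathcal{F}}_{j}$, and then to transport the resulting expression into the coordinate atlas $\mathcal{A}$ by the transition maps of the Hopf bundles. First, for each $j>\ell$ fix the diffeomorphism $\Phi_{j}\in\operatorname{Diff}_{\widetilde{\mathcal{F}}_{j},\widetilde{\mathcal{F}}_{j}^{lin}}(\mathcal{H}_{j},D_{j})$ given by Lemma~\ref{linear models}, and define
\[
\widetilde{f}_{j}:=\Phi_{j}^{\ast}\bigl(t_{j}^{\nu_{j}}x_{j}^{\mu_{j}}\bigr).
\]
Since $\Phi_{j}$ preserves $D_{j}$ and fixes the two (possibly one) singular points of $\widetilde{\mathcal{F}}_{j}$ on $D_{j}$, the pullback takes the form $\widetilde{f}_{j}(t_{j},x_{j})=t_{j}^{\nu_{j}}x_{j}^{\mu_{j}}U_{j}(t_{j},x_{j})$ for a holomorphic non-vanishing unit $U_{j}$ defined on a neighborhood of $D_{j}$; by compactness of $D_{j}\simeq\mathbb{CP}(1)$ we may shrink to a standard polydisk of radius $(\epsilon,1+\epsilon)$.

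The second step is to match $\widetilde{f}_{j}$ with $\widetilde{f}_{j+1}$ across the corner $c_{j,j+1}=D_{j}\cap D_{j+1}$. At this corner both Hopf components describe the same germ of a reduced, non-degenerate linearizable singularity (Lemma~\ref{linear models}), so the two multivalued first integrals $\widetilde{f}_{j}$ and $\widetilde{f}_{j+1}$ differ only by an overall nonzero constant (any two multivalued first integrals of such a singularity, each of the canonical monomial shape with the correct pair of exponents, are proportional). Absorbing this constant into $U_{j+1}$, we may assume $\widetilde{f}_{j}=\widetilde{f}_{j+1}$ in a neighborhood of $c_{j,j+1}$. Because $\mathcal{B}_{+}\mathcal{F}$ is a linear chain (Lemmas~\ref{companion fibration} and \ref{companion 2}), this adjustment can be propagated inductively outward from $D_{\ell+1}$ toward $D_{n}$ without any cocycle obstruction, producing a single well-defined multivalued function $\widetilde{f}_{+}$ on a neighborhood of $\mathcal{B}_{+}\mathcal{F}$ whose exterior derivative defines it.

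Finally, the claimed expression in the $(u_{j},y_{j})$ chart is obtained by substituting the Hopf transition relations $u_{j}=1/t_{j}$ and $y_{j}=t_{j}^{k_{j}}x_{j}$ from \eqref{eq3}:
\[
t_{j}^{\nu_{j}}x_{j}^{\mu_{j}}U_{j}(t_{j},x_{j})
=u_{j}^{-\nu_{j}}\bigl(u_{j}^{k_{j}}y_{j}\bigr)^{\mu_{j}}U_{j}\!\left(\tfrac{1}{u_{j}},\,u_{j}^{k_{j}}y_{j}\right)
=u_{j}^{k_{j}\mu_{j}-\nu_{j}}\,y_{j}^{\mu_{j}}\,V_{j}(u_{j},y_{j}),
\]
where $V_{j}(u_{j},y_{j}):=U_{j}(1/u_{j},u_{j}^{k_{j}}y_{j})$ is again a holomorphic unit on $\mathbb{D}_{\epsilon}\times\mathbb{D}_{1+\epsilon}$ (the original unit $U_{j}$ was defined on the corresponding opposite polydisk, and the transition keeps it non-vanishing). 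The case of $\mathcal{B}_{-}\mathcal{F}$ is identical with indices running $j<\ell$.

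The principal obstacle I anticipate is step two: ensuring that the constants used to match $\widetilde{f}_{j}$ and $\widetilde{f}_{j+1}$ at each corner can be chosen coherently along the chain. The linearity of the branch (in the graph-theoretic sense, no ramification away from the principal projective line) is exactly what kills the possible cohomological obstruction; had a branch contained a node of valency $\geq 3$, the pasting would produce a genuine $1$-cocycle. Here the tree has no such nodes outside $D_{\ell}$, so the inductive matching closes and $\widetilde{f}_{+}$ is globally well-defined as a multivalued first integral of $\mathcal{B}_{+}\mathcal{F}$.
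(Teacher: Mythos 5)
Your overall strategy --- pull back the linearizing first integrals of Lemma \ref{linear models} component by component and glue them across the corners of the linear chain --- is the same as the paper's. The genuine divergence is the gluing mechanism. The paper does not glue by rescaling: it constructs the conjugations $\Phi_{\ell+1},\Phi_{\ell+2},\dots$ coherently, choosing each $\Phi_{j+1}$ (via Proposition \ref{holonomy lifting}) so as to extend the identification of leaf spaces already induced by $\Phi_{j}$ on a transversal near the corner; then $\Phi_{j+1}\circ\Phi_{j}^{-1}$ fixes the leaves of the corner germ and the two pullbacks agree on the nose. Your alternative --- the ratio of two first integrals of the prescribed monomial-times-unit shape at a non-resonant corner is a holomorphic unit which is itself a first integral, hence a constant, which you then absorb --- is also valid and somewhat more economical, \emph{provided} you state that the exponent pairs are normalized coherently across the corner ($\nu_{j+1}=\mu_{j}$, $\mu_{j+1}=k_{j}\mu_{j}-\nu_{j}$), since that is what makes the monomial factors cancel and the ratio single-valued. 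Your closing remark that a valency $\geq 3$ vertex would create a genuine $1$-cocycle is not the right dichotomy: the dual graph is a tree, hence constants always glue; what the linear chain really buys is that each $D_{j}$, $j\neq\ell$, carries at most two singularities, which is what makes Lemma \ref{linear models} and a single global first integral per component available.

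Two steps need repair. First, $\Phi_{j}^{\ast}(t_{j}^{\nu_{j}}x_{j}^{\mu_{j}})=t_{j}^{\nu_{j}}x_{j}^{\mu_{j}}U_{j}$ does not follow from ``$\Phi_{j}$ preserves $D_{j}$ and fixes the singular points'': you need $t_{j}\circ\Phi_{j}$ and $x_{j}\circ\Phi_{j}$ to be unit multiples of $t_{j}$ and $x_{j}$, i.e.\ $\Phi_{j}$ must preserve both local separatrices at each singular point as sets. This holds because in the atlas $\mathcal{A}$ those separatrices are coordinate axes, and in fact the conjugations produced by Proposition \ref{holonomy lifting} are fiber-preserving, $\Phi_{j}(t_{j},x_{j})=(t_{j},\alpha_{j}x_{j}+x_{j}a_{j}(t_{j},x_{j}))$, which is exactly what the paper invokes. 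Second, and more seriously, the formula $V_{j}(u_{j},y_{j}):=U_{j}(1/u_{j},u_{j}^{k_{j}}y_{j})$ is only defined on the overlap of the two charts ($u_{j}\neq 0$) and says nothing about holomorphy of $V_{j}$ at $u_{j}=0$, i.e.\ at the opposite corner of $D_{j}$; as written this step fails, and controlling $\widetilde{f}_{j}$ at \emph{both} corners is the whole point of the lemma. The fix is the paper's: write $\Phi_{j}$ in the $(u_{j},y_{j})$ chart as $(u_{j},\beta_{j}y_{j}+y_{j}b_{j}(u_{j},y_{j}))$ and compute the pullback of $u_{j}^{k_{j}\mu_{j}-\nu_{j}}y_{j}^{\mu_{j}}$ directly there, obtaining $V_{j}=[\beta_{j}+b_{j}]^{\mu_{j}}$ holomorphic and non-vanishing on the full polydisk.
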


\begin{proof}
We prove the statement for the positive branch case, the other one being
completely analogous. Pick $\Phi_{\ell+1}\in\operatorname*{Diff}%
_{\widetilde{\mathcal{F}}_{\ell+1},\widetilde{\mathcal{F}}_{\ell+1}^{lin}%
}(\mathcal{H}_{\ell+1},D_{\ell+1})$ and let $\widetilde{f}_{\ell+1}%
:=\Phi_{\ell+1}^{\ast}\widetilde{f}_{\ell+1}^{lin}$. Let $p$ be a regular
point of $D_{\ell+2}$ near the corner $c_{\ell+1,\ell+2}:=D_{\ell+1}\cap
D_{\ell+1}$ and $\Sigma_{p}$ be the fiber of $\mathbb{H}_{\ell+2}$ over $p$.
Recall that $\Phi_{\ell+1}$ induces a bijective map between the spaces of
leaves of $\widetilde{\mathcal{F}}_{\ell+1}$ and $\widetilde{\mathcal{F}%
}_{\ell+1}^{lin}$ which can be realized as $\phi_{\ell+2}\in
\operatorname*{Diff}(\Sigma_{p},p)$. In particular, $\phi_{\ell+2}$ takes
$\operatorname*{Hol}_{\Sigma_{p}}(\widetilde{\mathcal{F}}_{\ell+2},D_{\ell
+2})$ in $\operatorname*{Hol}_{\Sigma_{p}}(\widetilde{\mathcal{F}}_{\ell
+2}^{lin},D_{\ell+2})$. Since $\widetilde{\mathcal{F}}_{\ell+2}$ has just two
singularities, then Proposition \ref{holonomy lifting} ensures that one can
extend $\phi_{\ell+2}$ to $\Phi_{\ell+2}\in\operatorname*{Diff}_{\widetilde
{\mathcal{F}}_{\ell+2},\widetilde{\mathcal{F}}_{\ell+2}^{lin}}(\mathcal{H}%
_{\ell+2},D_{\ell+2})$ by classical path lifting arguments along the fibers of
$\mathbb{H}_{\ell+2}$ (just use the same arguments in the proof of Lemma
\ref{linear models}). Since $\Phi_{\ell+1}$ and $\Phi_{\ell+2}$ induce the
same bijective map between the spaces of leaves of $\widetilde{\mathcal{F}%
}_{\ell+1,\ell+2}$ and $\widetilde{\mathcal{F}}_{\ell+1,\ell+2}^{lin}$, then
$\Phi_{\ell+2}\circ\Phi_{\ell+1}^{-1}$ fixes the leaves of $\widetilde
{\mathcal{F}}_{\ell+1,\ell+2}^{lin}$. Therefore, if we let $\widetilde
{f}_{\ell+2}:=\Phi_{\ell+2}^{\ast}\widetilde{f}_{\ell+2}^{lin}$, then
$\widetilde{f}_{\ell+2}=\widetilde{f}_{\ell+1}$ about $c_{\ell+1,\ell+2}$.
Proceeding by induction on $j>\ell$ we obtain a multivalued first integral for
$\mathcal{B}_{+}\widetilde{\mathcal{F}}$. Finally, let us verify that
$\widetilde{f}_{+}$ has the desired form. Since $\widetilde{f}_{j}^{lin}%
(t_{j},x_{j})=t_{j}^{\nu_{j}}x_{j}^{\mu_{j}}$ and $\Phi_{j}$ is of the form
$\Phi_{j}(t_{j},x_{j})=(t_{j},\alpha_{j}x_{j}+x_{j}a_{j}(t_{j},x_{j}))$, with
$\alpha_{j}\in\mathbb{C}^{\ast}$ and $a_{j}\in\mathfrak{m}_{2}$ (where
$\mathfrak{m}_{2}$ denotes the maximal ideal of $\mathcal{O}_{2}$), then a
straightforward calculation shows that $\widetilde{f}_{j}(t_{j},x_{j}%
)=t_{j}^{\nu_{j}}x_{j}^{\mu_{j}}U_{j}(t_{j},x_{j})$ where $U_{j}(t_{j}%
,x_{j})=[\alpha_{j}+a_{j}(t_{j},x_{j})]^{\mu_{j}}\in\mathcal{O}^{\ast
}(\mathbb{D}_{\epsilon}\times\mathbb{D}_{1+\epsilon})$ for some $\epsilon>0$.
Similarly $\widetilde{f}_{j}^{lin}(u_{j},y_{j})=u_{j}^{k_{j}\mu_{j}-\nu_{j}%
}y_{j}^{\mu_{j}}$ and $\Phi_{j}(u_{j},y_{j})=(u_{j},\beta_{j}y_{j}+y_{j}%
b_{j}(u_{j},y_{j}))$, with $\beta_{j}\in\mathbb{C}^{\ast}$ and $b_{j}%
\in\mathfrak{m}_{2}$. Thus $\widetilde{f}_{j}(u_{j},y_{j})=u_{j}^{k_{j}\mu
_{j}-\nu_{j}}y_{j}^{\mu_{j}}V_{j}(u_{j},y_{j})$ where $V_{j}(u_{j}%
,y_{j})=[\beta_{j}+b_{j}(u_{j},y_{j})]^{\mu_{j}}\in\mathcal{O}^{\ast
}(\mathbb{D}_{\epsilon}\times\mathbb{D}_{1+\epsilon})$ for some $\epsilon>0$.
\end{proof}

\subsubsection{Holomorphic first integrals and the geometry of
$\operatorname*{Sing}(\mathcal{G})$.}

The arguments used in the proof of Lemma \ref{companion fibration} ensure that
$\mathcal{F}$ is resolved together with any \textquotedblleft
generic\textquotedblright\ fiber of the \textit{companion\ fibration}
$\frac{y^{p}}{x^{q}}\equiv\operatorname*{const}$, i.e. $(\mathcal{G}:\eta=0)$
given by $\eta(x,y)=pxdy-qydx$. In other words, $\mathcal{F}$ and
$\mathcal{G}$ are resolved by the same sequence of blowups. In particular, the
minimal resolution of $\mathcal{G}$ has the same tree of projective lines of
the minimal resolution of any element of $\mathcal{QHS}_{\omega,f}^{c,1}$ and
contains its separatrices as fibers. Furthermore, for each $j\neq
\operatorname*{pr}(\widetilde{\mathcal{F}})$ the foliation $\widetilde
{\mathcal{G}}_{j}$ has a (global) holomorphic first integral of the form%
\[
\left\{
\begin{array}
[c]{l}%
\widetilde{\eta}(t_{j},x_{j})=d(t_{j}^{r_{j}}x_{j}^{s_{j}}),\\
\widetilde{\eta}(u_{j},y_{j})=d(u_{j}^{k_{j}s_{j}-r_{j}}y_{j}^{s_{j}}),
\end{array}
\right.
\]
where $r_{j}$, $s_{j}\in\mathbb{N}$ are relatively prime. Since $\widetilde
{\mathcal{G}}_{\operatorname*{pr}(\widetilde{\mathcal{F}})}$ is a radial
fibration, then $\widetilde{\mathcal{G}}_{\operatorname*{pr}(\widetilde
{\mathcal{F}})-1}$ has just one singularity (cf. Figure 8).%

\begin{center}
\includegraphics[
height=0.7074in,
width=4.7469in
]%
{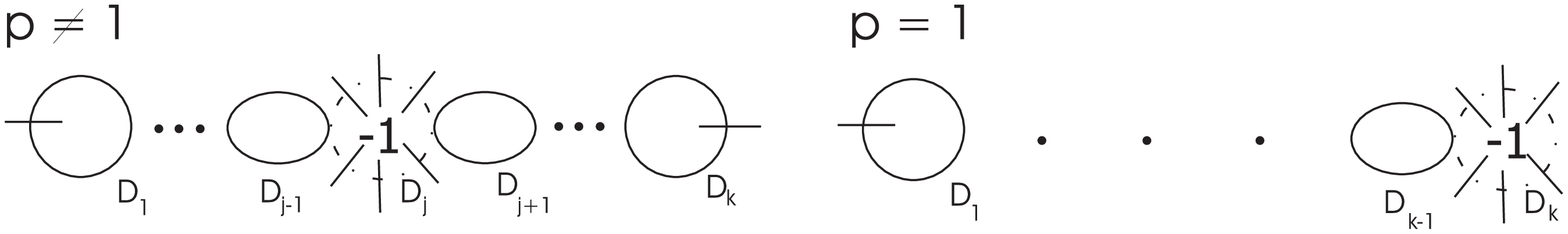}%
\\
Figure 8: The resolution tree of $\mathcal{G}:(\frac{x^{p}}{y^{q}}=const.)$%
\end{center}

\subsubsection{Comparing the indexes of $\mathcal{F}$ and $\mathcal{G}$}

First, recall the celebrated Camacho-Sad's index theorem. Let $S$ be a complex
surface, $C\subset S$ a smooth analytic curve, and $\mathcal{F}$ a germ of a singular foliation defined in a neighborhood of $C$ with just isolated
singularities. For each singular point $p$ of $\mathcal{F}$ in $S$, the
Camacho-Sad's index is defined as follows: choose local coordinates for $S$
around $p$ such that $C$ is given by $(y=0)$. Let $\mathcal{F}$ be given by
$\omega=0$ where $\omega(x,y)=a(x,y)dx+b(x,y)dy$. Then $CS_{p}(\mathcal{F}%
,S)=\operatorname*{Res}_{x=0}\frac{\partial}{\partial y}(\left.  \frac{a}%
{b}(x,y)\right\vert _{y=0})dx$. In particular, if $\omega(x,y)=\mu
y(1+\cdots)dx+\nu x(1+\cdots)dy$ where $\mu,\nu\neq0$, then $CS_{0}%
(\mathcal{F},S)=\frac{\mu}{\nu}$. A\ straightforward calculation shows that
this index does\ not depend on the coordinates.

\begin{theorem}
[Camacho-Sad \cite{CaSa 82a}]Let $S$ be a complex surface, $C\subset M$ a
smooth analytic curve, and $\mathcal{F}$ a germ of a singular foliation defined
in a neighborhood of $S$ with just isolated singularities. Then%
\[
\sum_{p\in\operatorname*{Sing}(\mathcal{F})}CS_{p}(\mathcal{F},S)=C\cdot C
\]
where $C\cdot C$ is the self-intersection number of $C$ in $S$.
\end{theorem}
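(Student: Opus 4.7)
The plan is to exhibit the sum of Camacho--Sad indices as the total residue of a meromorphic connection on the normal bundle $N_{C/S}$, whose first Chern number is exactly $C\cdot C$.

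\textbf{Step 1 (local 1-forms on $C$).} I would cover a neighborhood of $C$ by charts $(U_i;x_i,y_i)$ with $C\cap U_i=\{y_i=0\}$, and choose a local defining $1$-form $\omega_i=a_i\,dx_i+b_i\,dy_i$ for $\mathcal{F}$ on each $U_i$. Invariance of $C$ forces $a_i(x_i,0)\equiv 0$, so one can write $a_i=y_i\widetilde a_i$ with $\widetilde a_i\in\mathcal{O}(U_i)$. The natural object to consider is then
\[
\eta_i\;:=\;\left.\frac{\widetilde a_i}{b_i}\right|_{y_i=0}dx_i,
\]
a meromorphic $1$-form on $C\cap U_i$ whose residue at a singular point $p\in U_i$ is, by the very definition recalled in the paper, $CS_p(\mathcal{F},C)$.

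\textbf{Step 2 (cocycle identity).} On each overlap $U_i\cap U_j$ the defining forms differ by a unit, $\omega_i=u_{ij}\omega_j$ with $u_{ij}\in\mathcal{O}^\ast$, and the coordinates are related by $x_i=\varphi_{ij}(x_j,y_j)$ and $y_i=g_{ij}(x_j,y_j)\,y_j$, whose leading term $\tilde g_{ij}:=g_{ij}(x_j,0)$ is a representative cocycle of the normal bundle $N_{C/S}$. I would expand $\omega_i=u_{ij}\omega_j$ in the variables $(x_j,y_j)$ modulo $y_j^2$ and restrict to $C$. The expected outcome is the twisted transformation law
\[
\eta_i-\eta_j\;=\;-\,d\log\tilde g_{ij}\qquad\text{on }C\cap U_i\cap U_j,
\]
which says precisely that the collection $\{\eta_i\}$ defines a meromorphic connection $\nabla$ on the holomorphic line bundle $N_{C/S}\to C$, holomorphic away from $\operatorname{Sing}(\mathcal{F})\cap C$.

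\textbf{Step 3 (residue theorem).} Since $C$ is smooth and compact (in the applications of interest, a projective line of an exceptional divisor), I would invoke the classical residue formula for meromorphic connections on a holomorphic line bundle over a compact Riemann surface: the sum of the residues equals the degree of the bundle. Combined with $\deg N_{C/S}=C\cdot C$, this gives
\[
\sum_{p\in\operatorname{Sing}(\mathcal{F})\cap C}CS_p(\mathcal{F},C)\;=\;\sum_{p}\operatorname{Res}_{p}\nabla\;=\;\deg N_{C/S}\;=\;C\cdot C.
\]

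The main obstacle will be the explicit computation in Step 2. One must track how the unit $u_{ij}$ and the $y_j$-dependent part of $g_{ij}$ contribute to the ratio $\widetilde a_i/b_i$ under the coordinate change, and check that all terms not of the form $d\log\tilde g_{ij}$ lie in the ideal $(y_j)$ and so vanish upon restriction to $C$. This is precisely the point at which the hypothesis that $C$ is $\mathcal{F}$-invariant, i.e.\ $a_\bullet|_{y=0}\equiv 0$, is indispensable.
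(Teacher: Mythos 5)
The paper does not actually prove this statement: it is quoted from Camacho--Sad \cite{CaSa 82a} and used as a black box, so there is no internal proof to compare yours against. What you propose is the standard connection-theoretic proof of the index theorem (the argument usually attributed to Lins Neto and systematized by Suwa), as opposed to the original \v{C}ech--de Rham style argument of Camacho and Sad, and its structure is sound. In particular your Step 2 does close up: writing $y_i=g_{ij}y_j$, $x_i=\varphi_{ij}(x_j,y_j)$ and $\omega_i=u_{ij}\omega_j$, restriction to $\{y_j=0\}$ gives $u_{ij}b_j=g_{ij}b_i$ and $u_{ij}\tilde a_j=g_{ij}\tilde a_i\,\partial_{x_j}\varphi_{ij}+b_i\,\partial_{x_j}g_{ij}$, whence $\eta_j=\eta_i+d\log\tilde g_{ij}$, exactly the transformation law you predict.

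Two points need attention before this is a proof. First, $C$ must be assumed compact and $\mathcal{F}$-invariant; the statement as printed omits both hypotheses, and your Step 3 silently uses compactness. Second, the signs in Steps 1 and 3 are not mutually consistent. Since $N_{C/S}$ has transition cocycle $\tilde g_{ij}$ (for components of sections), a meromorphic connection on $N_{C/S}$ satisfies $\theta_i-\theta_j=+d\log\tilde g_{ij}$ and its total residue equals $\deg N_{C/S}$; your collection $\{\eta_i\}$ satisfies $\eta_i-\eta_j=-d\log\tilde g_{ij}$, so it is a connection on $N_{C/S}^{\vee}$ and $\sum_p\operatorname{Res}_p\eta_i=-C\cdot C$, not $+C\cdot C$. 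The discrepancy is inherited from the paper's displayed formula $CS_p=\operatorname{Res}_{x=0}\frac{\partial}{\partial y}\bigl(\frac{a}{b}\big|_{y=0}\bigr)dx$, which is off by a sign from the standard index: on the exceptional divisor of one blow-up of $\lambda y\,dx-x\,dy$ the two residues are $\frac{1}{1-\lambda}$ and $\frac{\lambda}{\lambda-1}$, summing to $+1$, whereas $D\cdot D=-1$. With the correct normalization $CS_p=-\operatorname{Res}_p\bigl(\tilde a_i/b_i\bigr)\big|_{y_i=0}dx_i$, the collection $\{-\eta_i\}$ is a genuine connection on $N_{C/S}$ and the residue theorem yields $\sum_p CS_p=\deg N_{C/S}=C\cdot C$ as desired. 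So the route is correct, but you must fix one coherent sign convention for the index and for the residue theorem and carry it through.
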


Now, comparing the Camacho-Sad's indexes of $\widetilde{\mathcal{F}}_{j}$ and
$\widetilde{\mathcal{G}}_{j}$ (starting from $\operatorname*{pr}%
(\widetilde{\mathcal{F}})-1$ to $1$ and from $\operatorname*{pr}%
(\widetilde{\mathcal{F}})+1$ to $n$), then the Camacho-Sad's index theorem
says that
\begin{equation}
\nu_{j}s_{j}-\mu_{j}r_{j}\neq0\text{ for all }j\neq\operatorname*{pr}%
(\widetilde{\mathcal{F}}). \label{eq2}%
\end{equation}

\begin{remark}
If $\operatorname*{Sep}(\mathcal{F})$ is commode, then $\mathcal{F}$ is
automatically generic. In fact, Lemma \ref{companion fibration} ensures that
any Hopf components of $\widetilde{\mathcal{F}}$ about an end of $D$ has just
one singularity. Therefore, with arguments similar to that used for
$\mathcal{G}$, one can verify that each Hopf component $\widetilde
{\mathcal{F}}_{j}$ has linear and periodic holonomy for all $j\neq
\operatorname*{pr}(\widetilde{\mathcal{F}})$. Thus it is linearizable and has
a holomorphic first integral (cf. \cite{MaMo 80}).
\end{remark}

\subsection{Cocycles fixing the leaves of $\mathcal{F}$ and $\mathcal{G}$}

Here we show how to trivialize $\Phi(\mathcal{F})$ and prove Theorem ~B.

\subsubsection{Fixing leaves locally.\label{Fixing locally}}

We introduce some notation first in order to clarify the ideas. Let
$\mathcal{F}$ be a germ of reduced singular foliation in $(\mathbb{C}^{2},0)$.
Since it is characterized by its (local) holonomy group (cf. \cite{MaMo 80},
\cite{MaRa 83}), then it is classical to identify the space of leaves of
$\mathcal{F}$ with the the quotient of $(\mathbb{C}^{2},0)$ by the action of
the unique fibre preserving suspension of this holonomy in
$\operatorname*{Aut}(\mathcal{F},f)$. Therefore, we say that $\phi
\in\operatorname*{Aut}(\mathcal{F})$ fixes the leaves of $\mathcal{F}$ if its
action in the space of leaves of $\mathcal{F}$ is trivial. We denote the set
of such automorphisms by $\operatorname*{Fix}(\mathcal{F})$. As before, this
condition can be verified explicitly by path lifting arguments. In particular,
if $U$ is an open neighborhood of some point in the exceptional divisor of
$\mathcal{B}_{+}\mathcal{F}$ \ (respect. $\mathcal{B}_{-}\mathcal{F}$) and
$\phi\in\operatorname*{Diff}(U)$, then we say that $\phi$ fixes the leaves of
$\mathcal{B}_{+}\mathcal{F}$ (respect. $\mathcal{B}_{-}\mathcal{F}$), denoting
it just by $\phi\in\operatorname*{Fix}(\mathcal{B}_{+}\mathcal{F})$ (respect.
$\mathcal{B}_{-}\mathcal{F}$), if $\phi$ preserves the level sets of the first
integrals introduced in Lemma \ref{branch integrals}.

Let $\mathcal{QHS}_{\omega}$ denote the set of $\mathcal{QHS}$ foliations that
are analytically equivalent to $(\mathcal{F}_{\omega}:\omega=0)$, and $f=0$ be
the separatrix set of $\mathcal{F}_{\omega}$. From the discussion in
\S \ref{anal. cocycles}, in order to determine the moduli space $\left.
\mathcal{QHS}_{\omega,f}^{c}\right/  \mathcal{QHS}_{\omega}$, we have to pick
a fixed model $\mathcal{F}^{o}\in$ $\mathcal{QHS}_{\omega,f}^{c}$ and a
collection of projective charts $(\Phi_{j})$ for any $\mathcal{F}\in$
$\mathcal{QHS}_{\omega,f}^{c}$ (with respect to $\mathcal{F}^{o}$) preserving
$f=0$. In order to simplify the expression of $(\Phi_{j})$, it is natural to
ask it to preserve not just $f=0$ but the whole companion fibration
$\mathcal{G}$. On the other hand, it is not difficult to see that the geometry
of the exceptional divisor of $\mathcal{F}$ allows to simplify inductively the
transversal structure of $\Phi(\mathcal{F})$ in such a way that each
$\Phi_{i,j}$ fixes (locally) the leaves of $\mathcal{F}$. This, of course,
will also simplify the expression of $\Phi(\mathcal{F})$. An optimistic
viewpoint suggests that one can do both at the same time simplifying a lot the
expression of $\Phi(\mathcal{F})$.

\subsubsection{Projective charts and first integrals adapted to a fixed model}

In each componentwise equivalence class pick a model $(\mathcal{F}^{o}%
:\omega^{o}=0)$ and fix first integrals $f_{+}^{o}$ and $f_{-}^{o}$ for
$\mathcal{B}_{+}\mathcal{F}^{o}$ and $\mathcal{B}_{-}\mathcal{F}^{o}$ as in
Lemma \ref{branch integrals}. Now, for any $\mathcal{F}\in\mathcal{QHS}%
_{\omega^{o},f}^{c}$, we shall construct first integrals for $\mathcal{B}%
_{+}\mathcal{F}$ and $\mathcal{B}_{-}\mathcal{F}$ and a collection of
projective charts in an appropriate way. First let us introduce some useful
notation: one says that a collection of projective charts $(\Phi_{j})$ for
$\mathcal{F}\in\mathcal{QHS}_{\omega^{o},f}^{c}$ with respect to
$\mathcal{F}^{o}$ and first integrals $f_{+}$ for $\mathcal{B}_{+}\mathcal{F}$
and $f_{-}$ \ for $\mathcal{B}_{-}\mathcal{F}$ are \textit{adapted} to
$(\mathcal{F}^{o},f_{+}^{o},f_{-}^{o})$ if each $\Phi_{j}$ takes $(f_{+}=c)$
in $(f_{+}^{o}=c)$ for all $j=\ell,\ldots,n$ and $\Phi_{j}$ takes $(f_{-}=c)$
in $(f_{-}^{o}=c)$ for all $j=1,\ldots,\ell$.

\begin{lemma}
\label{adapted}For each $\mathcal{F}\in\mathcal{QHS}_{\omega^{o},f}^{c}$ there
is a collection of projective charts $(\Phi_{j})$ for $\mathcal{F}$ with
respect to $\mathcal{F}^{o}$ and first integrals $f_{+}$ for $\mathcal{B}%
_{+}\mathcal{F}$ and $f_{-}$ \ for $\mathcal{B}_{-}\mathcal{F}$ adapted to
$(\mathcal{F}^{o},f_{+}^{o},f_{-}^{o})$.
\end{lemma}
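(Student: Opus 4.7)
The plan is to build the projective charts $(\Phi_j)$ and the first integrals $f_\pm$ simultaneously by induction along the positive and negative branches, starting from the principal projective line $D_\ell$. The construction essentially enhances the inductive procedure already carried out in the proof of Lemma \ref{branch integrals}, by tracking the first integral at every step.

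First, by Lemma \ref{project. charts} pick any projective chart $\Phi_\ell\in\operatorname*{Diff}_{\widetilde{\mathcal{F}}_\ell,\widetilde{\mathcal{F}}_\ell^o}(\mathcal{H}_\ell,D_\ell)$; its precise choice is irrelevant. Proceed by induction on $j>\ell$ along the positive branch. Assuming $\Phi_{\ell},\ldots,\Phi_{j-1}$ have been built, observe that $\Phi_{j-1}$ is defined on a full neighborhood of $D_{j-1}$, hence also on a neighborhood of the corner $c_{j-1,j}$, where it induces a germ of diffeomorphism on a transversal section $\Sigma_p$ to $D_j$ at a regular point $p$ near $c_{j-1,j}$. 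This germ conjugates the local holonomy of $\widetilde{\mathcal{F}}_j$ at $c_{j-1,j}$ to that of $\widetilde{\mathcal{F}}_j^o$. Because $j\neq\ell$, Lemma \ref{linear models} (and the fact that $\widetilde{\mathcal{F}}_j$ has at most two singularities on $D_j$ that share the same holonomy) together with Proposition \ref{holonomy lifting} allow us to extend this germ by path lifting along the fibers of $\mathbb{H}_j$ into a chart $\Phi_j\in\operatorname*{Diff}_{\widetilde{\mathcal{F}}_j,\widetilde{\mathcal{F}}_j^o}(\mathcal{H}_j,D_j)$ that coincides with $\Phi_{j-1}$ at the corner. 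Set
\[
f_+\big|_{\mathcal{H}_j}:=\Phi_j^{\ast}\bigl(f_+^o\big|_{\mathcal{H}_j}\bigr).
\]
Since $\Phi_j\circ\Phi_{j-1}^{-1}$ fixes the leaves of $\widetilde{\mathcal{F}}_{j-1,j}^o$ on the overlap about $c_{j-1,j}$ and $f_+^o$ is a first integral of $\widetilde{\mathcal{F}}_{j-1,j}^o$, the pullbacks $\Phi_{j-1}^{\ast}f_+^o$ and $\Phi_j^{\ast}f_+^o$ agree on that overlap. The resulting pieces therefore glue to a multivalued first integral $f_+$ on $\mathcal{B}_+\mathcal{F}$, and by construction each $\Phi_j$ takes $(f_+=c)$ to $(f_+^o=c)$ for all $j\geq\ell$.

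A symmetric induction on $j<\ell$ produces $\Phi_{\ell-1},\ldots,\Phi_1$ and the first integral $f_-$ on $\mathcal{B}_-\mathcal{F}$ with the analogous adaptedness. The collection $(\Phi_j)_{j=1}^n$ and the pair $(f_+,f_-)$ are then adapted to $(\mathcal{F}^o,f_+^o,f_-^o)$ as required. The main obstacle is the consistency of the pullbacks across each corner, which would fail if $\Phi_j$ could not be arranged to glue with $\Phi_{j-1}$ at $c_{j-1,j}$; but this is precisely what the path-lifting extension from Lemma \ref{branch integrals} guarantees --- any two projective charts agreeing at a corner differ by a leaf-fixing automorphism of $\widetilde{\mathcal{F}}_j^o$, which automatically preserves the level sets of $f_+^o$. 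The one slightly delicate point is the first step at $j=\ell+1$ (respectively $j=\ell-1$), where $D_\ell$ carries a richer holonomy; however, since $\Phi_\ell$ conjugates $\widetilde{\mathcal{F}}_\ell$ to $\widetilde{\mathcal{F}}_\ell^o$ on all of $\mathcal{H}_\ell$, the induced germ at the corner automatically conjugates the relevant local holonomies and the induction starts cleanly.
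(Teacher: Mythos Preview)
Your proof is correct and follows essentially the same approach as the paper: start from an arbitrary projective chart $\Phi_\ell$ on the principal component, propagate inductively along each branch by extending the germ induced at each corner via Proposition \ref{holonomy lifting}, define $f_\pm$ as the pullbacks $\Phi_j^\ast f_\pm^o$, and use the fact that $\Phi_j\circ\Phi_{j-1}^{-1}$ fixes the leaves of $\widetilde{\mathcal{F}}_{j-1,j}^o$ to glue the pieces. The paper's proof is organized identically, only spelling out the first step $j=\ell+1$ in slightly more detail before invoking induction.
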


\begin{proof}
We prove the statement for the positive branch case, the other one being
completely analogous. Pick $\Phi_{\ell}\in\operatorname*{Diff}_{\widetilde
{\mathcal{F}}_{\ell},\widetilde{\mathcal{F}}_{\ell}^{o}}(\mathcal{H}_{\ell
},D_{\ell})$ and let $\widetilde{f}_{\ell,\ell+1}:=\Phi_{\ell}^{\ast
}\widetilde{f}_{\ell,\ell+1}^{o}$, where $\widetilde{f}_{\ell,\ell+1}^{o}$ is
the germ of $\widetilde{f}_{\ell+1}^{o}$ at the corner $c_{\ell,\ell
+1}:=D_{\ell}\cap D_{\ell+1}$. Let $p$ be a regular point of $D_{\ell+1}$ near
the corner $c_{\ell,\ell+1}$ and $\Sigma_{p}$ be the fiber of $\mathbb{H}%
_{\ell+1}$ over $p$. Recall that $\Phi_{\ell}$ induces a bijective map between
the spaces of leaves of $\widetilde{\mathcal{F}}_{\ell,\ell+1}$ and
$\widetilde{\mathcal{F}}_{\ell,\ell+1}^{o}$ which can be realized as
$\phi_{\ell+1}\in\operatorname*{Diff}(\Sigma_{p},p)$. In particular,
$\phi_{\ell+1}$ takes $\operatorname*{Hol}_{\Sigma_{p}}(\widetilde
{\mathcal{F}}_{\ell,\ell+1},D_{\ell+1})$ onto $\operatorname*{Hol}_{\Sigma
_{p}}(\widetilde{\mathcal{F}}_{\ell,\ell+1}^{o},D_{\ell+1})$. Since
$\widetilde{\mathcal{F}}_{\ell+1}$ has just two singularities, then the spaces
of leaves of $\widetilde{\mathcal{F}}_{\ell,\ell+1}$ and $\widetilde
{\mathcal{F}}_{\ell+1}$ coincide as the spaces of leaves of $\widetilde
{\mathcal{F}}_{\ell,\ell+1}^{o}$ and $\widetilde{\mathcal{F}}_{\ell+1}^{o}$.
Therefore Proposition \ref{holonomy lifting} ensures that one can extend
$\phi_{\ell+1}$ to $\Phi_{\ell+1}\in\operatorname*{Diff}_{\widetilde
{\mathcal{F}}_{\ell+1},\widetilde{\mathcal{F}}_{\ell+1}^{o}}(\mathcal{H}%
_{\ell+1},D_{\ell+1})$ along the fibers of $\mathbb{H}_{\ell+1}$ by classical
path lifting arguments (just use the same arguments in the proof of Lemma
\ref{linear models}). Since $\Phi_{\ell}$ and $\Phi_{\ell+1}$ induce the same
bijective map between the spaces of leaves of $\widetilde{\mathcal{F}}%
_{\ell,\ell+1}$ and $\widetilde{\mathcal{F}}_{\ell,\ell+1}^{o}$, then
$\Phi_{\ell}\circ\Phi_{\ell+1}^{-1}$ fixes the leaves of $\widetilde
{\mathcal{F}}_{\ell,\ell+1}^{o}$. Therefore, if we let $\widetilde{f}_{\ell
+1}:=\Phi_{\ell+1}^{\ast}\widetilde{f}_{\ell+1}^{o}$, then $\widetilde
{f}_{\ell+1}=\widetilde{f}_{\ell,\ell+1}$ about $c_{\ell,\ell+1}$. Proceeding
by induction on $j>\ell+1$ we obtain a multivalued first integral for
$\mathcal{B}_{+}\widetilde{\mathcal{F}}$ and the collection of projective
charts with the desired properties.
\end{proof}

In order to give a better understanding of the proof of the next lemma, let us
make a brief digression about the simultaneous linearization of two
transversal non-singular foliations. As it is well known, two germs of
non-singular holomorphic foliations $\mathcal{F}$ and $\mathcal{G}$ can be
simultaneously linearized. In fact the problem can be easily reduced to the
following: given the germs of holomorphic functions $f(x,y)=yU(x,y)$,
$f^{o}(x,y)=y$ and $g(x,y)=x$ about the origin where $U\in\mathcal{O}%
_{2}^{\ast}$, find out $\Phi\in\operatorname*{Diff}(\mathbb{C}^{2},0)$ such
that $\Phi^{\ast}f^{o}=f$ and $\Phi^{\ast}g=g$. If we let $\Phi
(x,y)=(a(x,y),b(x,y))$, then the problem reduces to the following system of
equations%
\[
\left\{
\begin{array}
[c]{l}%
b(x,y)=yU(x,y);\\
a(x,y)=x.
\end{array}
\right.
\]
whose solution is evident. The core of the proof of the following result is
analogous (cf. (\ref{eq4})).

\begin{lemma}
\label{simult. linear.}For each $\mathcal{F}\in\mathcal{QHS}_{\omega^{o}%
,f}^{c}$ and each $j=1,\ldots,\ell-1$ (respect. $j=\ell+1,\ldots,n$) there is
$\Psi_{j}\in\operatorname*{Fix}(\widetilde{\mathcal{G}}_{j})$ such that
$f_{-}^{lin}=\Psi_{j\ast}f_{-}$ (respect. $f_{+}^{lin}=\Psi_{j\ast}f_{+}$).
\end{lemma}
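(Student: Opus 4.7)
The plan is to seek $\Psi_j$ with a simple diagonal-rescaling form in the charts of the Hopf bundle $\mathcal{H}_j$, so that the two requirements on $\Psi_j$ translate into an algebraic system whose nondegeneracy condition is exactly (\ref{eq2}). I will sketch the positive-branch case $j=\ell+1,\ldots,n$; the negative branch is completely symmetric.

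First I work in the chart $(t_j,x_j)$ of $\mathcal{H}_j$ from (\ref{eq3}) and make the ansatz $\Psi_j(t_j,x_j)=(t_j\,\alpha(t_j,x_j),\,x_j\,\beta(t_j,x_j))$ with $\alpha,\beta$ holomorphic and nonvanishing on the polydisk of Lemma \ref{branch integrals}. Since $f_+$ reads $t_j^{\nu_j}x_j^{\mu_j}U_j$ and the companion $\widetilde{\mathcal{G}}_j$ has the holomorphic first integral $t_j^{r_j}x_j^{s_j}$, the demands $\Psi_j^{\ast}f_+^{lin}=f_+$ and $\Psi_j\in\operatorname{Fix}(\widetilde{\mathcal{G}}_j)$ become respectively
\begin{equation*}
\alpha^{\nu_j}\beta^{\mu_j}=U_j,\qquad \alpha^{r_j}\beta^{s_j}=1.
\end{equation*}
Because $U_j\in\mathcal{O}^{\ast}(\mathbb{D}_{\epsilon}\times\mathbb{D}_{1+\epsilon})$ and this polydisk is simply connected, $\log U_j$ admits a single-valued holomorphic branch, and the above becomes a $2\times 2$ linear system in $(\log\alpha,\log\beta)$ with coefficient determinant $\mu_jr_j-\nu_js_j$. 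By the Camacho--Sad comparison (\ref{eq2}) this determinant is nonzero for every $j\neq\operatorname{pr}(\widetilde{\mathcal{F}})$, so the system has a unique holomorphic solution and $\alpha,\beta$ are recovered by exponentiation as explicit complex powers of $U_j$, which are again nonvanishing and holomorphic. The same construction with $V_j$ in place of $U_j$ in the chart $(u_j,y_j)$ produces the analogous rescaling near the other singularity on $D_j$.

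To finish, I must check that the two local expressions for $\Psi_j$ agree on the overlap $\{t_j\neq 0\}\cap\{u_j\neq 0\}$ so as to define a genuine element of $\operatorname{Diff}(\mathcal{H}_j,D_j)$. Using the transitions $u_j=1/t_j$, $y_j=t_j^{k_j}x_j$, together with the exponent changes $\nu_j\mapsto k_j\mu_j-\nu_j$, $r_j\mapsto k_js_j-r_j$, $\mu_j\mapsto\mu_j$, $s_j\mapsto s_j$ that relate the two charts, a direct calculation shows that $U_j$ and $V_j$ coincide on the overlap, being the two chart-expressions of the single multivalued function $\widetilde{f}_+/(\text{linear model})$. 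Then the uniqueness of the solution to the algebraic system forces the two local rescalings to match, yielding a single $\Psi_j$ with the required properties. The main technical point is precisely this gluing verification, which rests on the global existence of the first integral $\widetilde{f}_+$ from Lemma \ref{branch integrals} and the globality of the holomorphic first integral of $\widetilde{\mathcal{G}}_j$; once those compatibilities are recorded, the non-resonance condition (\ref{eq2}) does all the remaining algebraic work.
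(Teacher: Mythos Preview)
Your proposal is correct and follows essentially the same route as the paper. The paper also uses a diagonal-rescaling ansatz (working with $\Phi_j:=\Psi_j^{-1}=(a_j,b_j)$), arrives at the same $2\times 2$ system in the exponents, solves it explicitly as $a_j=t_j\,U^{\,s_j/(\nu_js_j-\mu_jr_j)}$, $b_j=x_j\,U^{\,r_j/(\mu_jr_j-\nu_js_j)}$ via (\ref{eq2}), and then checks the second chart by directly rewriting $\Phi_j$ under the transition $u_j=1/t_j$, $y_j=t_j^{k_j}x_j$ (obtaining the expression in $V$); your logarithmic formulation and your gluing-by-uniqueness are equivalent presentations of the same computation.
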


\begin{proof}
We prove the result for the positive branch, the negative one being completely
analogous. In view of the second part of Lemma \ref{branch integrals}, one
just have to find a solution $\Phi_{j}:=\Psi_{j}^{-1}:=(a_{j}(t_{j}%
,x_{j}),b_{j}(t_{j},x_{j}){})$ to the system of equations
\begin{equation}
\left\{
\begin{array}
[c]{l}%
\Phi_{j}^{\ast}\widetilde{f}^{lin}(t_{j},x_{j})=\widetilde{f}^{o}(t_{j}%
,x_{j})\\
\Phi_{j}^{\ast}\widetilde{g}(t_{j},x_{j})=\widetilde{g}(t_{j},x_{j})
\end{array}
\right.  \Leftrightarrow\left\{
\begin{array}
[c]{l}%
a_{j}(t_{j},x_{j})^{\nu_{j}}b_{j}(t_{j},x_{j})_{j}{}^{\mu_{j}}=t_{j}^{\nu_{j}%
}x_{j}^{\mu_{j}}U(t_{j},x_{j})\\
a_{j}(t_{j},x_{j})^{r_{j}}b_{j}(t_{j},x_{j}){}^{s_{j}}=t_{j}^{r_{j}}%
x_{j}^{s_{j}}%
\end{array}
\right.  \label{eq4}%
\end{equation}
But this can be given in the affine charts $(t_{j},x_{j})$ by%
\[
\left\{
\begin{array}
[c]{c}%
a_{j}(t_{j},x_{j})=t_{j}[U(t_{j},x_{j})]^{\frac{s_{j}}{\nu_{j}s_{j}-\mu
_{j}r_{j}}},\\
b_{j}(t_{j},x_{j})=x_{j}[U(t_{j},x_{j})]^{\frac{r_{j}}{\mu_{j}r_{j}-\nu
_{j}s_{j}}}{},
\end{array}
\right.
\]
which is well defined by (\ref{eq2}). A straightforward calculation shows that
the expression of $\Phi_{j}$ in the affine chart $(u_{j},y_{j})$ is given by
\[
\Phi_{j}(u_{j},y_{j})=(u_{j}[V(u_{j},y_{j})]^{\frac{s_{j}}{\mu_{j}r_{j}%
-\nu_{j}s_{j}}},y_{j}[V(u_{j},y_{j})]^{\frac{r_{j}-k_{j}s_{j}}{\mu_{j}%
r_{j}-\nu_{j}s_{j}}})
\]
where $V(u_{j},y_{j}):=U(1/u_{j},u_{j}^{k_{j}}y_{j})\in\mathcal{O}^{\ast
}(\mathbb{D}_{\epsilon},\mathbb{D}_{1+\epsilon})$.
\end{proof}

\begin{remark}
As a straightforward consequence of the above lemma, there is a system of
coordinates $\widetilde{\mathcal{A}}_{j}:=\{(\widetilde{t}_{j},\widetilde
{x}_{j}),(\widetilde{u}_{j},\widetilde{y}_{j})\in\mathbb{C}^{2}:\widetilde
{u}_{j}=1/\widetilde{t}_{j},\widetilde{y}_{j}=\widetilde{t}_{j}^{k_{j}%
}\widetilde{x}_{j}\}$ for $\mathbb{H}_{j}(-k_{j})$ such that the first
integrals of $\widetilde{\mathcal{F}}_{j}$ and $\widetilde{\mathcal{G}}_{j}$
are given respectively by $\widetilde{t}_{j}^{\nu_{j}}x_{j}^{\mu_{j}}$,
$\widetilde{u}_{j}^{k_{j}\mu_{j}-\nu_{j}}\widetilde{y}_{1}^{\mu_{j}}$ and
$\widetilde{t}_{j}^{r_{j}}\widetilde{x}_{j}^{s_{j}}$, $\widetilde{u}%
_{j}^{k_{j}s_{j}-r_{j}}\widetilde{y}_{j}^{s_{j}}$ for all $j\neq\ell$.
\end{remark}

Now we enrich a lit bit the structure preserved by the cocyles.

\begin{lemma}
\label{web}Let $\mathcal{F}\in\mathcal{QHS}_{\omega^{o},f}^{c}$, then there is
a collection of projective charts $(\Phi_{j})$ with respect to $\mathcal{F}%
^{o}$ such that $\Phi_{j}\in\operatorname*{Fix}(\widetilde{\mathcal{G}}_{j})$
for all $j=1,\ldots,n$, $\Phi_{j}\circ\Phi_{j+1}^{-1}\in\operatorname*{Fix}%
(\mathcal{B}_{+}\mathcal{F}^{o})$ for all $j=\ell,\ldots,n-1$ and $\Phi
_{j}\circ\Phi_{j+1}^{-1}\in\operatorname*{Fix}(\mathcal{B}_{-}\mathcal{F}%
^{o})$ for all $j=1,\ldots,\ell-1$.
\end{lemma}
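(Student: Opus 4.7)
The plan is to construct the family $(\Phi_j)$ by propagating outward from the principal projective line $D_\ell$. For the central component I would use Proposition \ref{holonomy lifting}, exploiting the fact that the companion fibration $\widetilde{\mathcal{G}}_\ell$ is radial and therefore plays the role of a transversal fibration common to $\widetilde{\mathcal{F}}_\ell$ and $\widetilde{\mathcal{F}}_\ell^o$. For the remaining components I would combine the adapted projective charts supplied by Lemma \ref{adapted} with the simultaneous linearization of Lemma \ref{simult. linear.}, so that the resulting charts preserve both the leaves of $\widetilde{\mathcal{F}}_j$ and the leaves of $\widetilde{\mathcal{G}}_j$ at the same time.

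Concretely, the first step is to apply Proposition \ref{holonomy lifting} to the pair $(\widetilde{\mathcal{F}}_\ell,\widetilde{\mathcal{F}}_\ell^o)$: they share a separatrix set, they have analytically conjugate projective holonomies (since $\mathcal{F},\mathcal{F}^o$ lie in $\mathcal{QHS}_{\omega^o,f}^c$), and $\widetilde{\mathcal{G}}_\ell$ is transversal to both. Taking $f_1=f_2=\widetilde{\mathcal{G}}_\ell$ in the construction of that proposition, the explicit formula $\Phi(t,x)=h_t^2\circ\phi\circ(h_t^1)^{-1}$ produces a projective chart $\Phi_\ell\in\operatorname{Diff}_{\widetilde{\mathcal{F}}_\ell,\widetilde{\mathcal{F}}_\ell^o}(\mathcal{H}_\ell,D_\ell)$ taking each fiber of $\widetilde{\mathcal{G}}_\ell$ onto itself, so $\Phi_\ell\in\operatorname{Fix}(\widetilde{\mathcal{G}}_\ell)$. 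Using this particular $\Phi_\ell$ as the seed, I would then run the inductive construction of Lemma \ref{adapted} to obtain adapted projective charts $\Phi_j^{ad}$ for every $j$ together with the corresponding multivalued first integrals $f_\pm$ for $\mathcal{B}_\pm\mathcal{F}$, automatically compatible with the fixed $f_\pm^o$.

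For every $j\neq\ell$ I would next apply Lemma \ref{simult. linear.} to $\mathcal{F}$ with these $f_\pm$, producing $\Psi_j\in\operatorname{Fix}(\widetilde{\mathcal{G}}_j)$ satisfying $\Psi_{j\ast}f_\pm=f^{lin}$, and the same argument applied to $\mathcal{F}^o$ produces $\Psi_j^o\in\operatorname{Fix}(\widetilde{\mathcal{G}}_j)$ with $\Psi_{j\ast}^o f_\pm^o=f^{lin}$. Setting $\Phi_j:=(\Psi_j^o)^{-1}\circ\Psi_j$ for $j\neq\ell$, a one-line pullback computation gives $\Phi_j^*f_\pm^o=f_\pm$, so $\Phi_j$ is a projective chart taking $\widetilde{\mathcal{F}}_j$ to $\widetilde{\mathcal{F}}_j^o$, while the property $\Phi_j\in\operatorname{Fix}(\widetilde{\mathcal{G}}_j)$ is inherited directly from its two factors.

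The delicate point, which I expect to be the main obstacle, is verifying the cocycle conditions $\Phi_j\circ\Phi_{j+1}^{-1}\in\operatorname{Fix}(\mathcal{B}_\pm\mathcal{F}^o)$, particularly across the corners $c_{\ell,\ell+1}$ and $c_{\ell-1,\ell}$ where the $\Psi$-modified charts meet the unmodified $\Phi_\ell$. The crucial observation is that both the adapted charts and their simultaneous linearizations pull $f_\pm^o$ back to the very same function $f_\pm$: the modification by $(\Psi_j^o)^{-1}\circ\Psi_j$ simply rewrites the chart in the coordinates $\widetilde{\mathcal{A}}_j$ of the remark following Lemma \ref{simult. linear.}, preserving level sets of $f_\pm$ and of $f_\pm^o$. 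Therefore $\Phi_j^*f_\pm^o=f_\pm=\Phi_{j+1}^*f_\pm^o$ in every overlap, which forces $(\Phi_j\circ\Phi_{j+1}^{-1})^*f_\pm^o=f_\pm^o$ and hence $\Phi_j\circ\Phi_{j+1}^{-1}$ fixes the leaves of $\mathcal{B}_\pm\mathcal{F}^o$. Since by construction $f_+=\Phi_\ell^*f_+^o$ at $c_{\ell,\ell+1}$ (and analogously $f_-=\Phi_\ell^*f_-^o$ at $c_{\ell-1,\ell}$), the same identity closes the argument precisely at the principal line.
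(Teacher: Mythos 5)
Your proposal is correct and follows essentially the same route as the paper: your $\Phi_j:=(\Psi_j^o)^{-1}\circ\Psi_j$ for $j\neq\ell$ is exactly the paper's $\Phi_j:=\Psi_j^{-1}\circ\Xi_j$ built from the two simultaneous linearizations of Lemma \ref{simult. linear.}, and the cocycle property is checked in the same way via the common pullback of the branch first integrals. The only (harmless) divergence is that you construct the seed chart $\Phi_\ell\in\operatorname{Fix}(\widetilde{\mathcal{G}}_\ell)$ explicitly through the path-lifting formula of Proposition \ref{holonomy lifting} with $f_1=f_2=\widetilde{\mathcal{G}}_\ell$, a detail the paper simply asserts as coming from Lemma \ref{adapted}; your version makes that assertion precise.
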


\begin{proof}
From Lemma \ref{adapted} one knows that there is a collection of projective
charts $(\Upsilon_{j})$ for $\mathcal{F}$ with respect to $\mathcal{F}^{o}$
and first integrals $f_{+}$ for $\mathcal{B}_{+}\mathcal{F}$ and $f_{-}$ \ for
$\mathcal{B}_{-}\mathcal{F}$ adapted to $(\mathcal{F}^{o},f_{+}^{o},f_{-}%
^{o})$, where $\Upsilon_{\ell}\in\operatorname*{Fix}(\mathcal{G}_{\ell})$;
thus we let $\Phi_{\ell}:=\Upsilon_{\ell}$. Now we construct $\Phi_{j}$ for
$j\neq\ell$. From Lemma \ref{simult. linear.} there are $\Psi_{j},\Xi_{j}%
\in\operatorname*{Diff}_{\widetilde{\mathcal{F}}_{j},\widetilde{\mathcal{F}%
}_{j}^{lin}}(\mathcal{H}_{j},D_{j})$ such that $\Xi_{j\ast}(f_{+})=f_{+}%
^{lin}$ (respect. $\Xi_{j\ast}(f_{-})=f_{-}^{lin}$), $\Psi_{j\ast}(f_{+}%
^{o})=f_{+}^{lin}$ (respect. $\Psi_{j\ast}(f_{-}^{o})=f_{-}^{lin}$) and
$\Xi_{j},\Psi_{j}\in\operatorname*{Fix}(\mathcal{G}_{j})$. Then define
$\Phi_{j}:=\Psi_{j}^{-1}\circ\Xi_{j}$ in order to obtain the following
commutative diagram%
\begin{equation}%
\begin{array}
[c]{ccc}
& \widetilde{\mathcal{F}}_{j} & \\
^{\Phi_{j}}\swarrow & \circlearrowleft & \searrow^{\Xi_{j}}\\
\widetilde{\mathcal{F}}_{j}^{o} & \overset{\Psi_{j}}{\longrightarrow} &
\widetilde{\mathcal{F}}_{j}^{lin}%
\end{array}
\label{eq5}%
\end{equation}

\end{proof}

\subsubsection{Trivializing cocycles}

Here we follow the program outlined in \S \ref{Fixing locally} in order to
trivialize the cocycles associated with a given fixed model. Recall that
$f_{+}^{o}$ (respect. $f_{-}^{o}$) is the multivalued first integral for
$\mathcal{B}_{+}\mathcal{F}^{o}$ (respect. $\mathcal{B}_{-}\mathcal{F}^{o}$).

\begin{lemma}
\label{linear isotropy extension}Let $\Psi_{j,j+1}\in\operatorname*{Fix}%
(\widetilde{\mathcal{F}}_{j,j+1}^{lin})\cap\operatorname*{Fix}(\widetilde
{\mathcal{G}}_{j,j+1})$ for $j=1,\ldots,n-1$. Then $\Psi_{j,j+1}$ has a unique
extension to $\Psi_{j+1}\in\operatorname*{Fix}(\widetilde{\mathcal{F}}%
_{j+1}^{lin})\cap\operatorname*{Fix}(\widetilde{\mathcal{G}}_{j+1})$ for all
$j\geq\ell$. Analogously, $\Psi_{j,j+1}$ has a unique extension to $\Psi
_{j}\in\operatorname*{Fix}(\widetilde{\mathcal{F}}_{j}^{lin})\cap
\operatorname*{Fix}(\widetilde{\mathcal{G}}_{j})$ for all $j<\ell$.
\end{lemma}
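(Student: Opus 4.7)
The plan is to work in the simultaneously-linearized coordinates $\widetilde{\mathcal{A}}_{j+1}$ produced in the remark after Lemma \ref{simult. linear.}, in which $\widetilde{\mathcal{F}}^{lin}_{j+1}$ and $\widetilde{\mathcal{G}}_{j+1}$ are given by the monomial first integrals $\widetilde{t}^{\nu}\widetilde{x}^{\mu}$ and $\widetilde{t}^{r}\widetilde{x}^{s}$ respectively (dropping the $j+1$ subscripts on exponents). The key technical point will be a rigidity statement at the corner: every element of $\operatorname*{Fix}(\widetilde{\mathcal{F}}_{j,j+1}^{lin})\cap\operatorname*{Fix}(\widetilde{\mathcal{G}}_{j,j+1})$ is necessarily diagonal in those coordinates, after which the extension to a neighborhood of $D_{j+1}$ is canonically prescribed by the Hopf bundle transition.

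For the rigidity, observe that near $c_{j,j+1}$ the two axes $\{\widetilde{t}=0\}\subset D_{j}$ and $\{\widetilde{x}=0\}\subset D_{j+1}$ are simultaneously separatrices (hence leaves away from the corner) of both foliations; any $\Psi_{j,j+1}$ in the joint fix group therefore preserves each axis setwise and one can write
\[
\Psi_{j,j+1}(\widetilde{t},\widetilde{x})=\bigl(\widetilde{t}\,\alpha(\widetilde{t},\widetilde{x}),\ \widetilde{x}\,\beta(\widetilde{t},\widetilde{x})\bigr)
\]
with $\alpha,\beta$ germs of nowhere vanishing holomorphic functions. Pulling back the two first integrals converts leaf-preservation into the functional identities $\alpha^{\nu}\beta^{\mu}\equiv 1$ and $\alpha^{r}\beta^{s}\equiv 1$. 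Choosing holomorphic branches of $\log\alpha,\log\beta$ on a simply connected piece of the regular locus and differentiating yields the linear system $\nu\,d\log\alpha + \mu\,d\log\beta = 0$ and $r\,d\log\alpha + s\,d\log\beta = 0$, whose determinant $\nu s-\mu r$ is non-zero by the Camacho--Sad calculation (\ref{eq2}). Hence $d\log\alpha=d\log\beta=0$, so $\alpha\equiv\alpha_{0}$ and $\beta\equiv\beta_{0}$ are constants subject to $\alpha_{0}^{\nu}\beta_{0}^{\mu}=\alpha_{0}^{r}\beta_{0}^{s}=1$.

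For the extension I define $\Psi_{j+1}$ in the chart $(\widetilde{t},\widetilde{x})$ by $\Psi_{j+1}(\widetilde{t},\widetilde{x})=(\alpha_{0}\widetilde{t},\beta_{0}\widetilde{x})$ and in the conjugate chart $(\widetilde{u},\widetilde{y})$ by $\Psi_{j+1}(\widetilde{u},\widetilde{y})=(\widetilde{u}/\alpha_{0},\,\alpha_{0}^{k_{j+1}}\beta_{0}\,\widetilde{y})$. A direct computation shows that these expressions are consistent with the transition $\widetilde{u}=1/\widetilde{t}$, $\widetilde{y}=\widetilde{t}^{k_{j+1}}\widetilde{x}$ of the Hopf bundle $\mathbb{H}_{j+1}(-k_{j+1})$, so they patch to a biholomorphism on a full neighborhood of $D_{j+1}$. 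Using the explicit forms of the first integrals in the $(\widetilde{u},\widetilde{y})$ chart recalled in the remark after Lemma \ref{simult. linear.}, the constraints $\alpha_{0}^{\nu}\beta_{0}^{\mu}=\alpha_{0}^{r}\beta_{0}^{s}=1$ immediately give $\Psi_{j+1}\in\operatorname*{Fix}(\widetilde{\mathcal{F}}^{lin}_{j+1})\cap\operatorname*{Fix}(\widetilde{\mathcal{G}}_{j+1})$.

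Uniqueness is handled by the same rigidity: any second extension $\Psi'_{j+1}$ produces a map $\Psi'_{j+1}\circ\Psi_{j+1}^{-1}$ again in the joint fix group whose germ at $c_{j,j+1}$ is the identity, so its diagonal constants are $(1,1)$ and it coincides with the identity on a neighborhood of $D_{j+1}$. The case $j<\ell$ is symmetric, with $D_{j}$ in the role of $D_{j+1}$ and the chart $\widetilde{\mathcal{A}}_{j}$ replacing $\widetilde{\mathcal{A}}_{j+1}$. The sole non-formal step is the rigidity of paragraph two, which hinges entirely on the Camacho--Sad transversality $\nu_{j+1}s_{j+1}-\mu_{j+1}r_{j+1}\neq 0$ recorded in (\ref{eq2}); once it is invoked, the rest of the argument is an explicit coordinate verification.
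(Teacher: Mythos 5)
Your proposal is correct and follows essentially the same route as the paper: write the corner germ in coordinates where both $\widetilde{\mathcal{F}}^{lin}$ and $\widetilde{\mathcal{G}}$ have monomial first integrals, use the two resulting functional equations together with the Camacho--Sad non-degeneracy $\nu_{j+1}s_{j+1}-\mu_{j+1}r_{j+1}\neq 0$ from (\ref{eq2}) to force the germ to be diagonal linear, then extend by that linear map and get uniqueness from analytic continuation. You merely spell out two steps the paper leaves implicit (why axis-preservation gives the form $(\widetilde{t}\alpha,\widetilde{x}\beta)$, and the $d\log$ computation showing $\alpha,\beta$ are constant), which is fine.
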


\begin{proof}
We prove the first part of the Lemma, the second one being completely
analogous. We adopt the coordinate system $\mathcal{A}$ introduced in
(\ref{eq3}). Notice that the corner $c_{j,j+1}=D_{j}\cap D_{j+1}$ is
represented by the origin in the affine chart $(t_{j+1},x_{j+1})$ for
$\mathcal{H}_{j+1}$, thus $\Phi_{j,j+1}(t_{j+1},x_{j+1})=(a_{j+1}%
(t_{j+1},x_{j+1}),b_{j+1}(t_{j+1},x_{j+1}))$ where $a_{j+1},b_{j+1}%
\in\mathcal{O}(\mathbb{D}_{\epsilon_{1}}\times\mathbb{D}_{\epsilon_{2}})$.
Since $\Phi_{j,j+1}\in\operatorname*{Fix}(\widetilde{\mathcal{F}}%
_{j,j+1}^{lin})\cap\operatorname*{Fix}(\widetilde{\mathcal{G}}_{j,j+1})$, then
(denoting $i:=j+1$ for simplicity) $a_{i}$ and $b_{i}$ satisfy the following
system of equations
\[
\left\{
\begin{array}
[c]{l}%
a_{i}(t_{i},x_{i})^{\nu_{i}}b_{i}(t_{i},x_{i}){}^{\mu_{i}}=t_{i}^{\nu_{i}%
}x_{i}^{\mu_{i}}\\
a_{i}(t_{i},x_{i})^{r_{i}}b_{i}(t_{i},x_{i}){}^{s_{i}}=t_{i}^{r_{i}}%
x_{i}^{s_{i}}%
\end{array}
\right.
\]
whose solutions are of the form $a_{i}(t_{i},x_{i})=\alpha t_{i}$ and
$b_{i}(t_{i},x_{i})=\beta x{}_{i}$ where\ $\alpha,\frac{1}{\beta}$ are
$(\nu_{i}s_{i}-\mu_{i}r_{i})$-roots of unity. The uniqueness is
straightforward since both $\Phi_{j,j+1}$ and its extension $\Phi_{j+1}$ are holomorphic.
\end{proof}

Now we are in a position to show that the cocycles generated by generic
elements of $\mathcal{QHS}_{\omega^{o},f}^{c}$ are in fact trivial.

\begin{lemma}
\label{isotropy extension}Let $\Phi_{j,j+1}\in\operatorname*{Fix}%
(\widetilde{\mathcal{F}}_{j,j+1}^{o})\cap\operatorname*{Fix}(\widetilde
{\mathcal{G}}_{j,j+1})$ for $j=1,\ldots,n-1$. Then $\Phi_{j,j+1}$ has a unique
extension to $\Phi_{j+1}\in\operatorname*{Fix}(\widetilde{\mathcal{F}}%
_{j+1}^{o})\cap\operatorname*{Fix}(\widetilde{\mathcal{G}}_{j+1})$ for all
$j\geq\ell$. Analogously, $\Phi_{j,j+1}$ has a unique extension to $\Phi
_{j}\in\operatorname*{Fix}(\widetilde{\mathcal{F}}_{j}^{o})\cap
\operatorname*{Fix}(\widetilde{\mathcal{G}}_{j})$ for all $j<\ell$.
\end{lemma}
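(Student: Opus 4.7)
The strategy is to reduce the statement to the linear case handled by Lemma \ref{linear isotropy extension} using the linearization provided by Lemma \ref{simult. linear.}. I treat the positive branch $j\geq \ell$; the negative branch is completely analogous.

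First I would fix, for each Hopf component with $j\neq\ell$, a linearizing diffeomorphism $\Psi_{j}\in\operatorname*{Diff}_{\widetilde{\mathcal{F}}_{j}^{o},\widetilde{\mathcal{F}}_{j}^{lin}}(\mathcal{H}_{j},D_{j})$ as produced by Lemma \ref{simult. linear.}: $\Psi_{j}$ sends the first integral $f_{+}^{o}$ of $\mathcal{B}_{+}\mathcal{F}^{o}$ to the linear one $f_{+}^{lin}$ and fixes the leaves of $\widetilde{\mathcal{G}}_{j}$, so $\Psi_{j}\in\operatorname*{Fix}(\widetilde{\mathcal{G}}_{j})$. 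Given $\Phi_{j,j+1}\in\operatorname*{Fix}(\widetilde{\mathcal{F}}_{j,j+1}^{o})\cap\operatorname*{Fix}(\widetilde{\mathcal{G}}_{j,j+1})$, I transport it to a germ at the corner $c_{j,j+1}$ by setting
\[
\Lambda_{j,j+1}:=\Psi_{j+1}\circ\Phi_{j,j+1}\circ\Psi_{j+1}^{-1}.
\]
Since $\Psi_{j+1}$ carries $\widetilde{\mathcal{F}}_{j+1}^{o}$ onto $\widetilde{\mathcal{F}}_{j+1}^{lin}$ and belongs to $\operatorname*{Fix}(\widetilde{\mathcal{G}}_{j+1})$, a direct check shows $\Lambda_{j,j+1}\in\operatorname*{Fix}(\widetilde{\mathcal{F}}_{j,j+1}^{lin})\cap\operatorname*{Fix}(\widetilde{\mathcal{G}}_{j,j+1})$.

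Next I invoke Lemma \ref{linear isotropy extension} to obtain the unique extension $\Lambda_{j+1}\in\operatorname*{Fix}(\widetilde{\mathcal{F}}_{j+1}^{lin})\cap\operatorname*{Fix}(\widetilde{\mathcal{G}}_{j+1})$ of $\Lambda_{j,j+1}$ to a neighborhood of the whole $D_{j+1}$. Pulling back by $\Psi_{j+1}$, I define
\[
\Phi_{j+1}:=\Psi_{j+1}^{-1}\circ\Lambda_{j+1}\circ\Psi_{j+1}.
\]
Again by functoriality of the operation, $\Phi_{j+1}\in\operatorname*{Fix}(\widetilde{\mathcal{F}}_{j+1}^{o})\cap\operatorname*{Fix}(\widetilde{\mathcal{G}}_{j+1})$, and by construction its germ at $c_{j,j+1}$ equals $\Psi_{j+1}^{-1}\circ\Lambda_{j,j+1}\circ\Psi_{j+1}=\Phi_{j,j+1}$, so $\Phi_{j+1}$ extends $\Phi_{j,j+1}$ as required. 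Uniqueness follows from the uniqueness clause in Lemma \ref{linear isotropy extension} via the conjugation by $\Psi_{j+1}$, or equivalently from the identity principle applied to the holomorphic extension on the connected neighborhood of $D_{j+1}$.

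The only conceptual hurdle is making sure that the passage to the linearized model is well-posed on the overlap region: one must verify that $\Lambda_{j,j+1}$ really does fix the leaves of both $\widetilde{\mathcal{F}}_{j,j+1}^{lin}$ and $\widetilde{\mathcal{G}}_{j,j+1}$ at the corner. This is where the compatibility built in Lemmas \ref{simult. linear.} and \ref{web} is essential: because $\Psi_{j+1}$ simultaneously sends $f_{+}^{o}$ to $f_{+}^{lin}$ and preserves the leaves of $\widetilde{\mathcal{G}}_{j+1}$, both required fixings descend (and ascend) along the conjugation. Once this point is in place, the extension to the whole linearized Hopf component is automatic from the previous lemma, and the rest is routine bookkeeping.
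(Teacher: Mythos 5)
Your proposal is correct and follows essentially the same route as the paper: conjugate $\Phi_{j,j+1}$ by the linearizing map $\Psi_{j+1}$ of Lemma \ref{simult. linear.} to land in $\operatorname*{Fix}(\widetilde{\mathcal{F}}_{j,j+1}^{lin})\cap\operatorname*{Fix}(\widetilde{\mathcal{G}}_{j,j+1})$, extend by Lemma \ref{linear isotropy extension}, and conjugate back. The paper's proof is exactly this, so no further comment is needed.
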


\begin{proof}
We prove the first part of the Lemma, since the second one is completely
analogous. Let $(\Psi_{j})\in\operatorname*{Fix}(\widetilde{\mathcal{G}}_{j}%
)$, $j=1,\ldots,n$, be the collection of maps introduced in Lemma
\ref{simult. linear.} and $\overline{\Phi}_{j,j+1}:=\Psi_{j+1}\circ
\Phi_{j,j+1}\circ(\Psi_{j+1})^{-1}$. Since $\Psi_{j\ast}f_{+}^{o}=f_{+}^{lin}%
$, then $\overline{\Phi}_{j,j+1}\in\operatorname*{Fix}(\widetilde{\mathcal{F}%
}_{j,j+1}^{lin})\cap\operatorname*{Fix}(\widetilde{\mathcal{G}}_{j,j+1})$ for
all $j=\ell,\ldots,n-1$ (cf. (\ref{eq5})). Hence Lemma
\ref{linear isotropy extension} assures that $\overline{\Phi}_{j,j+1}$ can be
extended to $\overline{\Phi}_{j+1}\in\operatorname*{Fix}(\widetilde
{\mathcal{F}}_{j+1}^{lin})\cap\operatorname*{Fix}(\widetilde{\mathcal{G}%
}_{j+1})$ for all $j=\ell,\ldots,n-1$. Therefore, $\Phi_{j+1}:=(\Psi
_{j+1})^{-1}\circ\overline{\Phi}_{j+1}\circ\Psi_{j+1}\in\operatorname*{Fix}%
(\widetilde{\mathcal{F}}_{j+1}^{o})\cap\operatorname*{Fix}(\widetilde
{\mathcal{G}}_{j+1})$ extends $\Phi_{j,j+1}$. A similar reasoning works for
all $j<\ell$.
\end{proof}

\subsubsection{Extending semi-local conjugations}

Here we use all the machinery developed above in order to prove Theorem ~B. In fact, we show that the vanishing of the cocycles in the
positive (respect. negative) branch means that we can extend to the positive
(respect. negative) branch any conjugation from $\widetilde{\mathcal{F}}%
_{\ell}$ to $\widetilde{\mathcal{F}}_{\ell}^{o}$.

\begin{proof}
[Proof of Theorem ~B]Let $\mathcal{F}^{o}\in\mathcal{QHS}%
_{\omega^{o},f}^{c}$ where $(\mathcal{F}^{o}:\omega^{o}=0)$ is a fixed model.
Let $(\Phi_{j})$ be a collection of projective charts given by Lemma \ref{web}
and $\Phi_{i,j}:=\Phi_{i}\circ\Phi_{j}^{-1}$. Then Lemma
\ref{isotropy extension} ensures that there is $\Xi_{\ell+1}\in
\operatorname*{Fix}(\widetilde{\mathcal{F}}_{\ell+1}^{o})\cap
\operatorname*{Fix}(\widetilde{\mathcal{G}}_{\ell+1})$ such that $\Xi_{\ell
+1}=\Phi_{\ell,\ell+1}$. Let $(\Phi_{j}^{(1)})$ be given by $\Phi_{j}%
^{(1)}:=\Phi_{j}$ for all $j\neq\ell+1$ and $\Phi_{\ell+1}^{(1)}:=\Xi_{\ell
+1}\circ\Phi_{\ell+1}$. Then $(\Phi_{j}^{(1)})$ is a collection of projective
charts such that $\Phi_{j,j+1}^{(1)}\in\operatorname*{Fix}(\widetilde
{\mathcal{F}}_{j,j+1}^{o})\cap\operatorname*{Fix}(\widetilde{\mathcal{G}%
}_{j,j+1})$ and $\Phi_{\ell,\ell+1}^{(1)}=\operatorname*{id}$. Repeating
inductively the same arguments for $j>\ell+1$ we obtain a collection of
projective charts $(\Phi_{j}^{(n-\ell)})$ such that $\Phi_{j,j+1}^{(n-\ell
)}\in\operatorname*{Fix}(\widetilde{\mathcal{F}}_{j,j+1}^{o})\cap
\operatorname*{Fix}(\widetilde{\mathcal{G}}_{j,j+1})$ for all $j=1,\ldots,n-1$
and $\Phi_{j,j+1}^{(n-\ell)}=\operatorname*{id}$ for all $j\geq\ell$. An
analogous reasoning works for all $j<\ell$, generating a collection of
projective charts $(\Phi_{j}^{(n-1)})$ such that $\Phi_{j,j+1}^{(n-1)}%
=\operatorname*{id}$ for all $j=1,\ldots,n-1$. In particular, this family
paste together in order to define a map $\Phi\in\operatorname*{Diff}(M,D)$
such that $\Phi_{\ast}\widetilde{\mathcal{F}}=\widetilde{\mathcal{F}}^{o}$, as desired.
\end{proof}

\end{document}